\newtheorem{theorem}{Theorem}
\newtheorem{corollary}{Corollary}
\newtheorem{definition}{Definition}
\newtheorem{lemma}{Lemma}
\newtheorem{remark}{Remark}
\newcommand\ackname{Acknowledgements}
  \newenvironment{acknowledgements}{%
      \titlepage
      \null\vfil
      \@beginparpenalty\@lowpenalty
      \begin{center}%
        \bfseries \ackname
        \@endparpenalty\@M
      \end{center}}%
     {\par\vfil\null\endtitlepage}
\title{Towards a monopole Fueter Floer homology I: \\a compactness theorem}
\author{Saman Habibi Esfahani}
\date{\today}
\begin{document}
\maketitle

\begin{abstract}
Motivated by a conjecture of Donaldson and Segal \cite{MR2893675}, we take a first step towards defining a new 3-manifold Floer theory, where the complex is defined by a count of Fueter sections of a hyperk\"ahler bundle over the 3-manifold with fibers modeled on the moduli space of centered monopoles on $\mathbb{R}^3$ with charge $k \in \mathbb{Z}_{\geq 2}$. The main difficulty in defining these counts comes from the non-compactness problems. In this writing, we prove a compactness theorem in this direction in the case of $k=2$. 
\end{abstract}

\tableofcontents

\section{Introduction}

The Fueter operator is a non-linear generalization of the Dirac operator defined over oriented Riemannian 3- and 4-manifolds, where the spinor bundle is replaced by a hyperk\"ahler bundle. We start with the 3-dimensional version as introduced by Taubes in \cite{MR1708781}. We address the 4-dimensional case in the appendix.

\begin{definition}
Let $(M,g)$ be an oriented Riemannian 3-manifold. Let $\pi: \mathfrak{X} \rightarrow M$ be a fiber bundle with fibers modeled on a hyperk\"ahler
manifold $(X,g_X,I,J,K)$
with an isometric bundle identification 
$I : STM \to \mathfrak{b}(\mathfrak{X})$,
where $STM$ is the unit tangent bundle of $M$ and  $\mathfrak{b}(\mathfrak{X})$ is the sphere bundle of the complex structures of the fibers of $\mathfrak{X}$. Let $\nabla$ be a connection on $\mathfrak{X}$. A section $f \in \Gamma (\mathfrak{X})$ is called a Fueter section if 
\begin{align*}
\mathfrak{F}(f) := I(e_1) \nabla_{e_1} f + I(e_2) \nabla_{e_2} f + I(e_3) \nabla_{e_3} f = 0 \in \Omega^0(M,f^*V\mathfrak{X}),     
\end{align*}
where $V\mathfrak{X} := ker(d\pi) : T \mathfrak{X} \rightarrow TM$ is the vertical bundle and $(e_1, e_2, e_3)$ is a local orthonormal frame on $M$. The operator $\mathfrak{F}$ is called the Fueter operator. This operator is well-defined and does not depend on the choices of local orthogonal frames.
\end{definition}

For any $k \in \mathbb{Z}_{\geq 2}$, we propose Casson-type counts and, more ambitiously, Floer homology groups of closed oriented Riemannian 3-manifolds, defined by a count of Fueter sections of a bundle with fibers modeled on the moduli spaces of centered monopoles on $\mathbb{R}^3$ with charge $k$. 

Let $(M,g)$ be an oriented Riemannian 3-manifold. Let $Fr_{SO(3)} \to M$ denote the $SO(3)$ frame bundle of $TM$. Let $\text{Mon}_k^{\circ}(\mathbb{R}^3)$ denote the moduli space of centered monopoles on $\mathbb{R}^3$ with charge $k \geq 2$. $\text{Mon}_k^{\circ}(\mathbb{R}^3)$ is a $(4k-4)$-dimensional non-compact hyperk\"ahler manifold \cite{MR934202}. There is an $SO(3)$ action on $\text{Mon}_k^{\circ}(\mathbb{R}^3)$ induced by the natural action of $SO(3)$ on $\mathbb{R}^3$.  We define the monopole bundle with charge $k$ by
\begin{align*}
    \mathfrak{X}_k:= Fr_{SO(3)} \times_{SO(3)} \text{Mon}_k^{\circ}(\mathbb{R}^3) \to M.
\end{align*}  

The Levi-Civita connection on the frame bundle, denoted by $\nabla_{LC}$, induces a connection on the associated monopole bundle $\mathfrak{X}_k$, and therefore, a choice of an auxiliary connection on $\mathfrak{X}_k$ is not needed. For any sectiont $f \in \Gamma(\mathfrak{X}_k)$, we have $\nabla f \in f^* \text{Vert}(\mathfrak{X}_k) \otimes T^*M$, where $\text{Vert}(\mathfrak{X}_k) = Fr_{SO(3)} \times_{SO(3)} T\mathfrak{X}_k$. At any point $x \in M$, let $f = [(e, \tilde{f})]$ for a section $e \in \Gamma(Fr_{SO(3)})$ such that at $x$ we have $\nabla_{LC} (d e) = 0$ and a map $\tilde{f}: M \to \text{Mon}_k^{\circ}(\mathbb{R}^3)$. We have 
\begin{align*}
    \nabla_X f(x) = [(e(x), d_x\tilde{f}(X))],
\end{align*}
for any $X \in T_xM$.

There is a natural identification $I: STM \to \mathfrak{b}(\mathfrak{X}_k)$ induced from the frame bundle structure. To see this, let $\text{Mon}_k^{\circ}(TM) \to M$ be the hyperk\"ahler bundle where the fiber above any $x \in M$ is defined to be the moduli space of centered monopoles on $T_xM$. There is a canonical bundle identification 
\begin{align*}
    \mathfrak{X}_k \cong 
    \text{Mon}_k^{\circ}(TM).
\end{align*}
Hence, any unit vector in $T_xM$ determines a unique complex structure on $\text{Mon}_k^{\circ}(T_xM)$, and therefore, on $(\mathfrak{X}_k)_x$.

Fueter operators over 3-manifolds are non-linear Fredholm operators with index zero. This motivates the following definition. 

\begin{definition}
Let $(M,g)$ be a closed oriented Riemannian 3-manifold. Let $\mathfrak{X}_k \to M$ be the monopole bundle with charge $k$. Let $\mathfrak{Fuet}_g(\mathfrak{X}_k)$ be the space of Fueter sections with respect to the Levi-Civita connection and the natural identification $I$. Let
\begin{align*}
  \mathfrak{Fuet}_g(\mathfrak{X}_k) =
  \{
  f \in \Gamma(\mathfrak{X}_k) \; | \; \mathfrak{F}(f) = 0
  \}.
\end{align*}
In the case $|\mathfrak{Fuet}_g(\mathfrak{X}_k)| < \infty$, we define the $k$-th monopole Fueter count of $(M,g)$ by
\begin{align*}
    m_k(M,g) := \# \mathfrak{Fuet}_g(\mathfrak{X}_k).
\end{align*}
This count can be understood as a $\mathbb{Z}_2$-count or, more ambitiously, one can hope to define a signed count,
\begin{align*}
    m_k(M,g) = \sum_{f \in \mathfrak{Fuet}_g(\mathfrak{X}_k)} \text{sign}(f).
\end{align*}
\end{definition}

These counts a priori depend on the Riemannian metric $g$. It would be optimistic to expect that these counts do not depend on the choice of the metric. One hopes to prove invariance for a generic choice of metric where a transversality condition holds or clarify how these counts change as the metric varies and potentially compensate for the changes to get invariants of 3-manifolds. 

More ambitiously, for each $k \in \mathbb{Z}_{\geq 2}$, one can move towards defining a Floer theory which categorifies the $k$-th monopole Fueter count, called the $k$-th monopole Futer Floer homology. The complex of this Floer homology is generated by the Fueter sections of $\mathfrak{X}_k \to M$, and the differentials are defined by a count of the 4-dimensional Fueter sections on $M \times \mathbb{R}$ of the pull-back bundle $\pi^* \mathfrak{X}_k \to M \times \mathbb{R}$ via the projection map $\pi: M \times \mathbb{R} \to M$, where the 4-dimensional Fueter sections interpolate between two 3-dimensional Fueter sections on $M$. A section $f \in \Gamma (\pi^* \mathfrak{X}_k)$ over $M \times \mathbb{R}$ is a 4-dimensional Fueter section if 
\begin{align*}
    \partial_t f = \sum_{i=1}^3 I(e_i) \nabla_{e_i} f, 
\end{align*}
where $t$ denotes the coordinate on the $\mathbb{R}$ component and $(e_1, e_2, e_3)$ is a local orthonormal frame on $M$. 

The main difficulty in defining these counts comes from the non-compactness problems and the potential wall-crossing phenomenon.

\hspace{5pt}

We give three items of motivation for this study.

\begin{itemize}
    \item Donaldson and Segal hinted at the idea of defining invariants of Calabi-Yau 3-folds by a count of monopoles on these manifolds \cite{MR2893675}. Moreover, they conjectured that these monopole counts should be equal to a weighted count of special Lagrangians in the Calabi-Yau 3-folds, where the weights are defined by a count of Fueter sections of monopole bundles over these special Lagrangians. The compactness problems are the major difficulty in proving the Donaldson-Segal conjecture. 
    
    Moreover, this observation of Donaldson and Segal suggests that Fueter sections are building blocks of the gluing construction of monopoles on Calabi-Yau 3-folds. The gluing argument is expected to be a higher-dimensional version of what appeared in the 3-dimensional case, as in \cite{MR614447, esfahani2022singular}.

    \item It is expected that the counts of Fueter sections of monopole bundles with charge 2 to be related to the Rozansky-Witten invariants of 3-manifolds \cite{MR1481135}. The potential wall-crossing phenomenon, i.e., the change in the count of Fueter sections as one varies the Riemannian metric, is a major difficulty in this direction. One hopes $k=2$ monopole Fueter Floer theory would categorify the Rozansky-Witten invariant of 3-manifolds. 

    \item Taubes suggested counts of Fueter sections of monopole bundles with $SO(3)$ actions, including the moduli spaces of centered monopoles. Moreover, he introduced the generalized Seiberg-Witten equations on 3-manifolds where one replaces the linear Dirac operator in the standard Seiberg-Witten equations with a Fueter operator on a non-linear hyperk\"ahler bundle. We hope some of the compactness analysis we develop here can be used in the study of Taubes' generalized Seiberg-Witten equations.
    
\end{itemize}

Following a theorem of Walpuski \cite{MR3718486}, a sequence of Fueter sections $(f_i)_{i=1}^{\infty}$ of a fiber bundle $\mathfrak{X}$ with fibers modeled on a compact hyperkähler manifold $X$ over a closed oriented Riemannian 3–manifold $(M,g)$ with bounded energy $(\frac{1}{2}\|\nabla f_i\|^2_{L^2})_{i=1}^{\infty}$, after passing to a subsequence, converges outside of a 1-dimensional closed rectifiable subset $S \subset M$ to a Fueter section $f$ in $C^{\infty}_{\text{loc}}$. The non-compactness along $S$ has two sources: 

\hspace{1pt}

(1) bubbling-off of holomorphic spheres in the fibers of $\mathfrak{X}$; 

\hspace{1pt}

(2) formation of non-removable singularities. 

\hspace{1pt}

When the fibers of the hyperk\"ahler bundle are non-compact, there is a third source of non-compactness: 

\hspace{1pt}

(3) A seqeunce of Fueter sections $(f_i)_{i=1}^{\infty}$ can diverge to infinity of $\mathfrak{X}$ on a subset of the 3-manifold $M$.

\hspace{1pt}

In this writing, we study the first two sources of non-compactness for Fueter sections of the monopole bundles with charge 2 and prove a compactness theorem in that direction. We will study the divergence to infinity in a sequential paper \cite{esfahani2023}. 

\begin{theorem}\label{Main-theorem}
Let $(M,g)$ be a closed oriented Riemannian 3-manifold. Let $\mathfrak{X}_2 \to M$ be the monopole bundle with charge 2. Let $K \subset \mathfrak{X}_2$ be a compact subset. Let $(f_i)_{i=1}^{\infty} \subset \Gamma(\mathfrak{X}_2)$ be a sequence of smooth Fueter sections such that $\text{Image}(f_i) \subset K$ for all $i$. Then, after passing to a subsequence, $(f_i)_{i=1}^{\infty}$ converges to a smooth Fueter section $f \in \Gamma (\mathfrak{X}_2)$ in $C^{\infty}(M)$.
\end{theorem}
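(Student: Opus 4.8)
The plan is to invoke Walpuski's compactness theorem \cite{MR3718486} and then show that the geometry of the charge-$2$ fibre forces the singular set $S$ to be empty, upgrading the $C^\infty_{\mathrm{loc}}$-convergence on $M\setminus S$ to global $C^\infty$-convergence. The decisive input is topological: the fibre $X=\mathrm{Mon}_2^\circ(\mathbb{R}^3)$ is the Atiyah--Hitchin manifold, which deformation retracts onto $\mathbb{RP}^2$ \cite{MR934202}, so that $H_2(X;\mathbb{Z})=H_3(X;\mathbb{Z})=0$. Since a non-constant $J$-holomorphic sphere $u\colon S^2\to X$, for any complex structure $J$ in the hyperk\"ahler family, has positive symplectic area $\int_{S^2}u^*\omega_J>0$ and hence a non-zero class in $H_2(X;\mathbb{R})$, the vanishing of $H_2$ shows that $X$ admits no non-constant holomorphic spheres. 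This is exactly the feature that distinguishes $k=2$ (the analogous vanishing fails for $k\geq 3$), and it removes source (1): every bubble extracted from the sequence arises as a $C^\infty_{\mathrm{loc}}$-limit of rescalings of the $f_i$, so its image lies in the compact set $K\subset X$, making it a holomorphic sphere in $X$ and therefore constant.

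To invoke \cite{MR3718486} I first need a uniform energy bound $\sup_i\tfrac12\|\nabla f_i\|_{L^2}^2<\infty$. I would extract this from the Weitzenb\"ock-type identity obtained by expanding $0=|\mathfrak{F}(f)|^2$ in a local orthonormal frame: the diagonal terms reproduce $|\nabla f|^2$, while the quaternionic relations $I(e_i)I(e_j)=\pm I(e_k)$ convert the off-diagonal terms into the pullback $f^*\Omega$ of a natural $3$-form $\Omega$ on $\mathfrak{X}_2$ assembled from the fibrewise K\"ahler forms and the identification $I$, so that $\tfrac12\|\nabla f\|_{L^2}^2=-\int_M f^*\Omega$ for every Fueter section. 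Because $H_2(X;\mathbb{R})=H_3(X;\mathbb{R})=0$ the fibre contributes nothing to $H_3$ of the total space, so this period is the same across sections into $K$; up to a curvature term controlled by the geometry of $K$ and $(M,g)$ this makes the energy uniformly bounded along the sequence. Walpuski's theorem then produces, after passing to a subsequence, a closed rectifiable $1$-dimensional set $S\subset M$ and a Fueter section $f$ on $M\setminus S$ with $f_i\to f$ in $C^\infty_{\mathrm{loc}}(M\setminus S)$.

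It remains to exclude source (2), i.e.\ to prove $S=\emptyset$. The plan is to combine an $\varepsilon$-regularity theorem for the first-order elliptic system $\mathfrak{F}(f)=0$ --- whose nonlinear coefficients are uniformly controlled precisely because $\mathrm{Image}(f_i)\subset K$ and the hyperk\"ahler geometry of $X$ is uniformly bounded on the fixed compact set $K$ --- with an almost-monotonicity formula for the scale-invariant energy $r\mapsto r^{-1}\int_{B_r(x)}|\nabla f|^2$. Upper semicontinuity of the resulting density together with a blow-up analysis at a hypothetical $x_0\in S$ produces a non-constant homogeneous Fueter section of $\mathbb{R}^3$, translation-invariant along the line tangent to $S$ at $x_0$; dimensional reduction along this line exhibits it, through the first-order structure of $\mathfrak{F}$, as a finite-energy holomorphic map of a punctured plane, which removable-singularity theory extends to a holomorphic sphere in $X$. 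By the topological input this sphere is constant, so the density at $x_0$ vanishes and $\varepsilon$-regularity forbids $x_0\in S$; hence $S=\emptyset$. A uniform $C^1$-bound follows, elliptic bootstrapping for the first-order system upgrades it to uniform $C^k$-bounds for all $k$, and Arzel\`a--Ascoli delivers the $C^\infty(M)$-convergent subsequence with smooth Fueter limit $f$.

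The hard part will be the singular-set removal of the previous paragraph. Excluding bubbling is a clean topological consequence of $H_2(X;\mathbb{Z})=0$, but making the tangent-map analysis rigorous for Fueter sections of the \emph{non-compact} fibre --- establishing the monotonicity formula in this bundle-valued setting and verifying that every tangent object genuinely reduces, via the first-order structure of $\mathfrak{F}$, to a holomorphic sphere so that the vanishing of $H_2$ can be brought to bear --- is considerably more delicate. This is also where the compact-image hypothesis does its essential work: it confines the analysis to a region of $X$ with uniformly bounded geometry and rules out any degeneration into the non-compact end of the Atiyah--Hitchin manifold, which is the separate source (3) of non-compactness deferred to \cite{esfahani2023}.
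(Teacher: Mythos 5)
Your first two steps are essentially the paper's own route in different clothing: exactness of every fibrewise symplectic form (you deduce it from $H^2(X;\mathbb{R})=0$ via the retraction onto $\mathbb{RP}^2$; the paper deduces it from the permuting $SO(3)$-action, which moreover supplies the explicit primitive $\alpha_i=\iota_{v_{i+1}}\omega_{i+2}$) both kills honest fixed-$J$ holomorphic spheres and, after integration by parts, yields the energy bound. One caveat: the energy is \emph{not} a period. The integral $\int_M \Lambda$ is not a homotopy invariant of the section --- this is exactly why the paper stresses that the energy of Fueter sections is not topological --- so ``this period is the same across sections into $K$'' is not a valid justification; what actually closes the estimate is the pointwise bound $|\alpha_{I(e_i)}|\le c\,r$ on $K$, giving $\tfrac12\|\nabla f\|_{L^2}^2\le c_K r(K)\|\nabla f\|_{L^2}$. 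Your hedge about ``curvature terms'' gestures at this but does not supply it.

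The genuine gap is your treatment of $\mathrm{sing}(f)$, and it is fatal as written. First, points of $\mathrm{sing}(f)$ carry no tangent line: $\mathcal{H}^1(\mathrm{sing}(f))=0$, so the ``translation-invariant along the line tangent to $S$'' reduction you invoke is Walpuski's mechanism at \emph{smooth points of the bubbling locus} $\Gamma$ --- which you have already emptied --- and it simply does not apply to the residual singular set. At $x\in\mathrm{sing}(f)$ the correct blow-up is the radially homogeneous tangent map $\varPhi_x:\mathbb{R}^3\setminus\{0\}\to\mathfrak{X}_x$ (Theorem \ref{sing-fueter}), and its restriction $\varphi_x:S^2\to X$ is holomorphic at each $y\in S^2$ with respect to the complex structure $-I(y)$ \emph{varying with} $y$, i.e.\ a tri-holomorphic map, not a holomorphic map for any fixed complex structure. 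Second, and decisively, tri-holomorphic spheres cannot be excluded by exactness or by $H_2(X)=0$: since the complex structure rotates over the sphere, $\varphi_x^*\omega_J$ has no sign, and in fact the Atiyah--Hitchin manifold \emph{does} contain a non-constant tri-holomorphic sphere --- the map sending $y\in S^2$ to the axisymmetric charge-$2$ monopole with axis $\mathbb{R}y$, whose image is the minimal $\mathbb{RP}^2$, $\Sigma_2$ (see the Remark following the proof of Theorem \ref{Main-theorem}). So the topological input that closes your bubbling step cannot close this one; your argument breaks exactly at the point you flag as ``the hard part.''

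What is needed instead, and what the paper does, is geometric rather than topological: tri-holomorphic maps are conformal harmonic (Theorem \ref{def:hhh}), so $\mathrm{Image}(\varphi_x)$ is a compact minimal surface; by the theorem of Tsai and Wang (Theorem \ref{Tsai-Wang-thm}) the only such surface in $\mathrm{Mon}_2^{\circ}(\mathbb{R}^3)$ is $\Sigma_2$; and $\Sigma_2$ is finally ruled out because the tangent map's image is \emph{contractible} in the fibre (Theorem \ref{tri-hol-image}, proved by a diagonal double-sequence argument realizing $\varPhi_x$ as a $C^{\infty}_{\mathrm{loc}}$-limit of rescalings of the smooth sections $f_i$, whose restrictions to small spheres bound balls), whereas $\Sigma_2$ is not contractible in $X$. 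This is also where charge $2$ genuinely enters: your parenthetical claim that the absence of holomorphic spheres ``distinguishes $k=2$'' is off --- exactness, hence the vanishing of the bubbling locus, holds for every $k\ge 2$ by the permuting action --- it is the Tsai--Wang classification of compact minimal submanifolds of the Atiyah--Hitchin space that is special to $k=2$.
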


\textbf{Plan of the paper.} In Section \ref{Preliminaries}, we recall Walpuski's compactness theorem and the background material on monopoles on $\mathbb{R}^3$. In Section \ref{sec:energy-bound}, we will prove an energy bound for Fueters sections of charge 2 monopole bundles. In Section \ref{sec:bub}, we will prove the vanishing of the bubbling locus for the Fueter sections of monopole bundles. In Section  \ref{sec:non-rem-sing}, we will show that the locus of non-removable singularities of Fueter sections of charge 2 monopole bundles vanishes. This completes the proof of Theorem \ref{Main-theorem}. In the appendix, we state and prove the 4-dimensional version of Theorem \ref{Main-theorem}. 

\vspace{7pt}

\noindent
\textbf{Acknowledgements.} This article is part of the Ph.D. thesis of the author at Stony Brook University \cite{MR4495257}. I am grateful to my advisor Simon Donaldson for suggesting this problem and also because of his guidance and support. Moreover, I would like to thank Lorenzo Foscolo, Thomas Walpuski, and Yao Xiao for helpful conversations. 

\section{Preliminaries}\label{Preliminaries}

In this section, we review the necessary background, which consists of a review of Walpuski's compactness theorem for Fueter sections and the geometry of the moduli spaces of centered monopoles on $\mathbb{R}^3$.

\subsection{Walpuski's compactness theorem} We start by recalling Walpuski's compactness theorem. The theorem is originally stated in the case where the hyperk\"ahler bundle $\mathfrak{X}$ is compact; however, the theorem and its proof, essentially without any modification, generalize to the case where $\mathfrak{X}$ is not necessarily compact, but $\text{Image} (f_i)$ lies inside a compact subset $K \subset \mathfrak{X}$ for all $i \in \{1, 2, \hdots \}$.

\begin{theorem}[Walpuski \cite{MR3718486}]\label{prop:wal3}
Let $(M,g)$ be a closed oriented Riemannian 3-manifold. Let $\mathfrak{X} \to M$ be a hyperk\"ahler bundle with fibers modeled on a hyperk\"ahler manifold $(X,g_X,I,J,K)$. Suppose the bundle is equipped with a connection $\nabla$ and an isometric bundle identification $I: STM \to \mathfrak{b(X)}$. Let $K \subset \mathfrak{X}$ be a compact subset. Let $(f_i)_{i=1}^{\infty}$ be a sequence of Fueter sections on this bundle such that $\text{Image}(f_i) \subset K$, and
\begin{align*}
    \mathcal{E}(f_i) := \frac{1}{2}
\int_M | \nabla f_i |^2 \leq c_e,
\end{align*}
for a constant $c_e$. 

Then, after passing to a subsequence, the following holds:

\begin{itemize}
\item There exists a closed rectifiable subset $S \subset M$, called the singular locus, with $\mathcal{H}^1(S) < \infty$ and a Fueter section $f \in \Omega^0 (M \setminus S, \mathfrak{X} )$ such that $f_i |_{M \setminus S}$ converges to $f$ in $C^{\infty}_{loc}(M\setminus S)$.

\item There exist a constant $\epsilon_0 > 0$ and an upper semi-continuous function $\Theta : S \rightarrow [\epsilon_0, \infty)$ such that the sequence of measures $\mu_i := |\nabla f_i|^2 \mathcal{H}^3$ converges weakly to $\mu := |\nabla f|^2 \mathcal{H}^3 + \Theta \mathcal{H}^1|_S$. The singular locus $S$ decomposes as 
\begin{align*}
   S = \Gamma \cup \text{sing} (f), 
\end{align*}
where $\Gamma$, called the bubbling locus, is given by
\begin{align*}
   \Gamma := \text{supp} (\Theta \mathcal{H}^1_{|_S}),
\end{align*}
and $\text{sing} (f)$, called the locus of non-removable singularities of $f$, is defined by
\begin{align*}
    \text{sing} (f) := \{ x \in M \; | \; 
\limsup_{r \downarrow 0} \frac{1}{r} \int_{B_r (x)} |\nabla f|^2 > 0 \}.
\end{align*}
$\Gamma$ is $\mathcal{H}^1$-rectifiable and 
$\mathcal{H}^1 (\text{sing} (f)) = 0.$

\item For each smooth point $x \in \Gamma$, i.e., a point $x$ where $T_x \Gamma$ exists and $x \notin \text{sing}(f)$, there exists a non-trivial $-I(v)$-holomorphic sphere 
\begin{align*}
    \zeta_x : \nu_x \Gamma \cup \{\infty\} \to \mathfrak{X}_x,
\end{align*}
where $v$ is a unit tangent vector in $T_x \Gamma$.
\end{itemize}
\end{theorem}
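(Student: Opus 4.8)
The plan is to derive Theorem~\ref{Main-theorem} from Walpuski's compactness theorem (Theorem~\ref{prop:wal3}) by showing that, for the charge-$2$ monopole bundle, the singular locus produced by that theorem is empty, so that the $C^\infty_{loc}(M\setminus S)$ convergence upgrades to $C^\infty(M)$ convergence. Concretely, I would split the argument into three parts, mirroring Sections~\ref{sec:energy-bound}--\ref{sec:non-rem-sing}: (i) a uniform energy bound $\mathcal{E}(f_i)\le c_e$, which is the hypothesis needed to invoke Theorem~\ref{prop:wal3}; (ii) vanishing of the bubbling locus $\Gamma$; and (iii) vanishing of the locus of non-removable singularities $\mathrm{sing}(f)$. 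Since $S=\Gamma\cup\mathrm{sing}(f)$, establishing (ii) and (iii) forces $S=\emptyset$, and then the limit $f$ extends across all of $M$ as a smooth Fueter section with full convergence.

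First I would prove the energy bound of Section~\ref{sec:energy-bound}. The inputs are that each $f_i$ solves $\mathfrak{F}(f_i)=0$ and that $\mathrm{Image}(f_i)\subset K$ with $K$ compact, so that all fiberwise geometric data — the target metric and curvature, the hyperk\"ahler triple of forms, the connection, and the identification $I$ — are uniformly bounded over $K$. The mechanism I would use is a Weitzenb\"ock/Bochner-type identity for the nonlinear Fueter operator, expressing $\int_M|\nabla f_i|^2$ in terms of $\int_M|\mathfrak{F}(f_i)|^2=0$, the curvature of $M$, the curvature of $\mathfrak{X}_2$, and the pullbacks $f_i^*\omega_\bullet$ of the fiberwise hyperk\"ahler $2$-forms. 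The feature special to charge $2$ is that the fiber is the Atiyah--Hitchin manifold, which deformation retracts onto $\mathbb{RP}^2$ and hence has $H^2(\,\cdot\,;\mathbb{R})=0$; the hyperk\"ahler forms are therefore exact, so the ``topological'' part of the energy reduces to boundary-type contributions that vanish on the closed $M$, leaving only terms bounded by $\sup_K$ of the geometry. This yields a bound $c_e$ independent of $i$.

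Next, for the vanishing of $\Gamma$ in Section~\ref{sec:bub}, I would use the last bullet of Theorem~\ref{prop:wal3}: at every smooth point $x\in\Gamma$ there is a non-constant $-I(v)$-holomorphic sphere $\zeta_x:\nu_x\Gamma\cup\{\infty\}\to\mathfrak{X}_x$. But a non-constant $J$-holomorphic sphere in a hyperk\"ahler manifold has strictly positive area $\int_{S^2}\zeta_x^*\omega_J>0$ with respect to the corresponding K\"ahler form $\omega_J$, whereas $\mathfrak{X}_x$ is a copy of the Atiyah--Hitchin manifold, on which every K\"ahler form in the twistor family is exact (again because $H^2(\,\cdot\,;\mathbb{R})=0$). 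By Stokes' theorem the area of any sphere vanishes, a contradiction. Hence $\Gamma$ has no smooth points; combined with its $\mathcal{H}^1$-rectifiability this gives $\Gamma=\emptyset$ and $\Theta\equiv 0$.

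The remaining and hardest step is the vanishing of $\mathrm{sing}(f)$ in Section~\ref{sec:non-rem-sing}, i.e.\ showing the limiting Fueter section has no non-removable singularities. Here I would analyze the local structure near a putative singular point via a monotonicity formula together with a tangent-map/blow-up argument: rescaling $f$ near $x\in\mathrm{sing}(f)$ produces a homogeneous tangent object which, after blow-up, reduces to a finite-energy holomorphic map from a punctured model (a cylinder or punctured plane) into a fiber $\mathfrak{X}_x\cong$ Atiyah--Hitchin. Exactness of the hyperk\"ahler forms should again force such finite-energy holomorphic maps to be constant, so the density $\limsup_{r\downarrow 0}\frac1r\int_{B_r(x)}|\nabla f|^2$ cannot be positive, contradicting $x\in\mathrm{sing}(f)$. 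I expect this to be the main obstacle: one must set up the correct $\varepsilon$-regularity and monotonicity in the Fueter-section setting, identify the blow-up limits precisely, and rule out a non-removable singularity supported on a genuinely $1$-dimensional set, where the tangent analysis is more delicate than at isolated points. Once $\mathrm{sing}(f)=\emptyset$ is established, $S=\emptyset$ and Theorem~\ref{Main-theorem} follows.
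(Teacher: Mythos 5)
Your proposal does not address the statement in question. The statement is Theorem~\ref{prop:wal3} itself --- Walpuski's compactness theorem --- whereas your argument takes Theorem~\ref{prop:wal3} as its main input and deduces Theorem~\ref{Main-theorem} from it. As a putative proof of Theorem~\ref{prop:wal3} this is circular: you invoke the very decomposition $S=\Gamma\cup\mathrm{sing}(f)$, the $\epsilon$-regularity, the measure convergence, and the bubbling statement that you would need to establish. For the record, the paper does not reprove this theorem either: it is quoted from \cite{MR3718486}, and the paper's only original contribution to the statement is the observation (Section~\ref{Preliminaries}) that Walpuski's hypothesis of compact fibers can be replaced by the hypothesis $\mathrm{Image}(f_i)\subset K$ for a fixed compact $K\subset\mathfrak{X}$, since the proof goes through without modification. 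A correct treatment of the statement would either reproduce Walpuski's proof (monotonicity, $\epsilon$-regularity, Federer--Ziemer-type covering arguments, and the bubbling analysis at smooth points of $\Gamma$) or, as the paper does, verify that compactness of the fibers enters only through uniform bounds on the fiberwise geometry over the images, which a fixed compact $K$ supplies.

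Even judged as a sketch of Theorem~\ref{Main-theorem} (which is what it actually is), two of your steps diverge from the paper in ways that matter. First, in the energy bound you claim exactness of the K\"ahler forms makes the energy reduce to ``boundary-type contributions that vanish on the closed $M$''; taken literally this would give $\mathcal{E}(f)=0$, which is absurd. The coframes $\delta^i$ are only locally defined, so integration by parts leaves the interior terms $\int d\delta^i\wedge f^*(\alpha_{I(e_i)})$, and the paper's actual mechanism is a \emph{linear growth} estimate $|\alpha_{I(e_i)}|\leq c\,r$ for the explicit primitives $\alpha_i=\iota_{v_{i+1}}\omega_{i+2}$ coming from the permuting $SO(3)$-action (using the ALF/Gibbons--Hawking asymptotics), yielding $\|\nabla f\|_{L^2}^2\lesssim r(K)\|\nabla f\|_{L^2}$; mere exactness via $H^2(\mathbb{RP}^2;\mathbb{R})=0$ does not give this quantitative control. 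Second, your claim that exactness ``forces finite-energy holomorphic blow-up limits to be constant'' at points of $\mathrm{sing}(f)$ is false: the tangent map restricts to a \emph{tri-holomorphic} map $\varphi_x:S^2\to\mathfrak{X}_x$, holomorphic for a complex structure that rotates with the point of $S^2$, so the Stokes argument killing $J$-holomorphic spheres for a fixed $J$ does not apply --- indeed the paper exhibits a nontrivial homogeneous Fueter map $\varPhi:\mathbb{R}^3\setminus\{0\}\to\mathrm{Mon}_2^{\circ}(\mathbb{R}^3)$ with image $\Sigma_2$ and a non-removable singularity at the origin. The paper instead shows tri-holomorphic maps are conformal harmonic, so the image is a compact minimal surface; by Tsai--Wang it must be the calibrated $\mathbb{RP}^2$ of axisymmetric monopoles, and this is ruled out because tangent-map images are contractible while $\Sigma_2$ generates $\pi_2(\mathrm{Mon}_2^{\circ}(\mathbb{R}^3))$. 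Your blow-up heuristic, as stated, would not close this step.
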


The analysis of Fueter sections relies on an almost monotonicity formula. 

\begin{theorem}[Walpuski \cite{MR3718486}]\label{almost-monotonicity}
    Let $f \in \Gamma(\mathfrak{X})$ be a Fueter section. For all $x \in M$ and $0<s<r\leq r_0(x)$, where $r_0(x)$ is the injectivity radius of $(M,g)$ at $x$, we have
    \begin{align}\label{almost-monotonicity1}
        \frac{e^{cr}}{r} \int_{B_r(x)}|\nabla f|^2 - \frac{e^{cs}}{s} \int_{B_s(x)}|\nabla f|^2 \geq 2\int_{B_r(x) \setminus B_s(x)}\frac{1}{\rho}|\nabla_r f|^2 - c(r^2 - s^2),
    \end{align}
for a constant $c$ and $\rho = d(x,\cdot)$.

Let $f: \mathbb{R}^3 \to X$ be a Fueter map, i.e., at each $x \in \mathbb{R}^3$ we have
\begin{align*}
    I d_x f(e_1) + J d_x f(e_2) + K d_x f(e_3) = 0. 
\end{align*}
Then, we have the following equality,
\begin{align}\label{almost-monotonicity2}
        \frac{1}{r} \int_{B_r(x)}|d f|^2 - \frac{1}{s} \int_{B_s(x)}|d f|^2 = 2\int_{B_r(x) \setminus B_s(x)}\frac{1}{\rho}|\partial_r f|^2.
\end{align}
\end{theorem}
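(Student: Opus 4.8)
The plan is to first establish the exact equality \eqref{almost-monotonicity2}, and then obtain the almost-monotonicity \eqref{almost-monotonicity1} on a general $(M,g)$ as a curvature perturbation of it. The decisive structural input is that a Fueter map (section) is \emph{harmonic}. I would verify this in the flat model by a Bochner-type computation: writing $f_a := df(e_a)$ and differentiating the Fueter equation $\sum_{a} I_a f_a = 0$, parallelism of the hyperk\"ahler structures ($\nabla I_a = 0$) gives $\sum_a I_a \nabla_b f_a = 0$ for every $b$. Applying $I_b$ and summing over $b$, the diagonal terms collapse via $I_a^2 = -1$ to $-\tau(f)$, the tension field, while the off-diagonal terms vanish because $I_bI_a$ is antisymmetric in $(a,b)$ whereas the Hessian $\nabla_b f_a = \nabla_b\partial_a f$ is symmetric (the coordinate frame on $\mathbb{R}^3$ is torsion-free). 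Hence $\tau(f)=0$.

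With harmonicity in hand, \eqref{almost-monotonicity2} follows from the divergence-free stress–energy tensor. I would set
\begin{align*}
    T_{ab} := \langle \partial_a f,\partial_b f\rangle - \tfrac12\delta_{ab}|df|^2, \qquad \partial_a T_{ab} = 0,
\end{align*}
integrate the identity $\partial_a(T_{ab}\,x_b) = \operatorname{tr} T = -\tfrac12|df|^2$ over $B_r(x)$, and apply the divergence theorem. Since $x_b = r\nu_b$ on $\partial B_r$, the boundary integral equals $r\int_{\partial B_r}|\partial_r f|^2 - \tfrac r2\int_{\partial B_r}|df|^2$, and rearranging gives $\frac{d}{dr}\big(\tfrac1r\int_{B_r}|df|^2\big) = \tfrac2r\int_{\partial B_r}|\partial_r f|^2$. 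Integrating in the radius from $s$ to $r$ and applying the coarea formula yields the claimed identity.

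For \eqref{almost-monotonicity1} I would repeat this argument in geodesic normal coordinates centered at $x$, where $g_{ab} = \delta_{ab} + O(\rho^2)$, the Christoffel symbols are $O(\rho)$, and the volume density is $1 + O(\rho^2)$. The same Bochner computation, now run with the Levi-Civita connection on $\mathfrak{X}_2$, shows the section is harmonic up to an inhomogeneity: the non-integrability of the local frame, the curvature of $(M,g)$, the bundle curvature, and the variation of the twisting $I:STM\to\mathfrak{b}(\mathfrak{X})$ contribute terms that are pointwise of size $O(\rho)\,|\nabla f|^2$ (multiplicative) together with an $O(1)$ inhomogeneous part. Carrying these through the stress–energy computation with the covariant divergence produces a differential inequality
\begin{align*}
    \frac{d}{dr}\left(e^{cr}\,\frac1r\int_{B_r(x)}|\nabla f|^2\right) \geq e^{cr}\left(\frac2r\int_{\partial B_r(x)}|\nabla_r f|^2 - c\,r\right),
\end{align*}
in which the integrating factor $e^{cr}$ absorbs the multiplicative error. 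Integrating from $s$ to $r$, using $e^{c\rho}\geq 1$ on the radial term and $\int_s^r c\rho\,e^{c\rho}\,d\rho \leq c(r^2-s^2)$ on the error, gives \eqref{almost-monotonicity1}.

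I expect the main obstacle to be the bundle/curved case: one must first pin down the precise inhomogeneous harmonic-section equation satisfied by a Fueter section of $\mathfrak{X}_2$ — keeping track of the bundle connection $\nabla_{LC}$ and the derivatives of $I$ — and then organize \emph{all} the resulting error terms so that each is either a multiplicative $O(\rho)|\nabla f|^2$ term (absorbed into $e^{cr}$) or an $O(1)$ term which, weighted by $1/\rho$ and integrated over the annulus $B_r\setminus B_s$, contributes exactly the additive $c(r^2-s^2)$. Securing this clean dichotomy, and thereby the specific correction $e^{cr}$ and $c(r^2-s^2)$ rather than a weaker $e^{cr^2}$-type bound, is the delicate bookkeeping; once the differential inequality is in place the radial integration is routine.
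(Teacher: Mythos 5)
Your proposal cannot diverge from ``the paper's own proof'' because the paper gives none: this theorem is quoted from Walpuski \cite{MR3718486}, and your outline is a correct reconstruction of exactly the argument used there --- Fueter maps are harmonic (the off-diagonal terms $I_bI_a$, $a\neq b$, are antisymmetric against the symmetric Hessian, so differentiating and contracting the Fueter equation kills the tension field), the flat equality \eqref{almost-monotonicity2} with the factor $2$ then follows from the divergence-free stress--energy tensor and the coarea formula, and on a curved $(M,g)$ the section is only almost harmonic, with the linear-in-$\nabla f$ inhomogeneity (from $\nabla I$, the frame, and the bundle curvature) split by Cauchy--Schwarz into a multiplicative error absorbed by $e^{cr}$ and an additive error integrating to $c(r^2-s^2)$. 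Your derivation also recovers the sharp factor $2$ for genuine Fueter sections, consistent with the paper's Remark that Walpuski's stated version (for perturbed Fueter sections) omits it, so the proposal matches the intended proof in both structure and constants.
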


\begin{remark}
    Formula \ref{almost-monotonicity1} in \cite{MR3718486} is stated without the constant 2 in front of the integral $\int_{B_r(x) \setminus B_s(x)}\frac{1}{\rho}|\nabla_r f|^2$. This is because there the formula is stated for more general sections where they satisfy a perturbed Fueter equation. From Walpuski's proof, one can see that when $f$ is a Fueter section, one has this more optimal inequality. However, the constant in the formula does not affect our analysis.
\end{remark}

\subsection{Moduli spaces of centered monopoles on $\mathbb{R}^3$}

This subsection briefly reviews the basic results about the geometry of the moduli spaces of centered monopoles on $\mathbb{R}^3$. For a more detailed account, consult the book by Atiyah and Hitchin \cite{MR934202}.

Let $P \rightarrow \mathbb{R}^3$ be a principal $SU(2)$-bundle. A pair $(A, \varPhi)$ of a connection $A$ on $P$ and a section $\varPhi$ of the associated adjoint bundle $\mathfrak{g}_P$ is called a monopole if it satisfies the Bogomolny equation,
\begin{align*}
    *F_A = d_A \varPhi.
\end{align*}
Let $(A, \varPhi)$ be a monopole with $\|F_A\|_{L^2} < \infty$. The mass is defined to be $m = \lim_{|x| \to \infty} |\varPhi(x)|$, which can be normalized to be 1. Moreover, the charge, denoted by $k$, is defined by 
\begin{align*}
    k = \text{deg} 
    \left ( \frac{\varPhi}{|\varPhi|} : S^2_R(0) \to S^2_1(0) \subset \mathfrak{su}(2) \right) \in \mathbb{Z}_{\geq 0},
\end{align*}
for a sufficiently large $R$.

The moduli space of monopoles on $\mathbb{R}^3$ with mass 1 and charge $k$ is defined by
\begin{align*}
    \text{Mon}_k(\mathbb{R}^3) = 
    \{
    (A, \varPhi) \; | \; *F_A = d_A \varPhi, \; F_A \in L^2, \;
    m = 1, \;
    \text{charge}(A, \varPhi) = k
    \}
    /
    \mathcal{G},
\end{align*}
where $\mathcal{G}$ denotes the gauge group.

The translations on $\mathbb{R}^3$ act naturally on the moduli spaces of monopoles on $\mathbb{R}^3$. The moduli space of centered $k$-monopoles on $\mathbb{R}^3$ is defined by 
\begin{align*}
    \text{Mon}_k^{\circ}(\mathbb{R}^3) = \frac{\text{Mon}_k(\mathbb{R}^3)}{\mathbb{R}^3}.
\end{align*}

\begin{lemma}[Atiyah-Hitchin \cite{MR934202}]
The moduli space of centered monopoles on $\mathbb{R}^3$ with charge $k \in \mathbb{Z}_{\geq 2}$ is a $(4k-4)$-dimensional hyperk\"ahler manifold. 
\end{lemma}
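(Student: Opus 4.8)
The plan is to realize $\text{Mon}_k(\mathbb{R}^3)$ as an infinite-dimensional hyperk\"ahler quotient of the configuration space of pairs $(A,\varPhi)$ by the gauge group $\mathcal{G}$, and then to descend the resulting structure to the centered moduli space. First I would fix suitable weighted Sobolev completions of the space $\mathcal{A}$ of pairs $(A,\varPhi)$ of mass $1$ and charge $k$, so that the Coulombic asymptotics $|\varPhi| \sim 1 - k/(2|x|)$ are built into the function spaces. The tangent space to $\mathcal{A}$ at any configuration is modeled on $\Omega^1(\mathbb{R}^3,\mathfrak{g}_P) \oplus \Omega^0(\mathbb{R}^3,\mathfrak{g}_P)$, and the central observation is that, under the identification of $\mathbb{R}\oplus\mathbb{R}^3$ with the quaternions, a tangent vector $(a,\psi)$ with $a = \sum_{i} a_i\, dx_i$ becomes the $\mathfrak{g}_P$-valued quaternion $\psi + a_1 i + a_2 j + a_3 k$. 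Right multiplication by $i,j,k$ then defines three complex structures compatible with the flat $L^2$ metric, making $\mathcal{A}$ a flat (infinite-dimensional) hyperk\"ahler space.

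Next I would verify that $\mathcal{G}$ acts on $\mathcal{A}$ by triholomorphic isometries and compute its hyperk\"ahler moment map $\mu = (\mu_1,\mu_2,\mu_3)$. The three components assemble exactly into the $\Omega^1$-valued quantity $*F_A - d_A \varPhi$, so that $\mu^{-1}(0)$ is precisely the solution set of the Bogomolny equation. The general hyperk\"ahler reduction principle then endows the quotient $\text{Mon}_k(\mathbb{R}^3) = \mu^{-1}(0)/\mathcal{G}$ with a hyperk\"ahler structure at every point where the action is free modulo its stabilizer and the quotient is a smooth manifold.

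To establish smoothness I would linearize the Bogomolny equation together with the Coulomb gauge-fixing condition $d_A^* a + [\varPhi,\psi] = 0$; the combined operator is a Dirac-type operator in which $\varPhi$ enters as a mass term, and a Weitzenb\"ock/vanishing argument exploiting $|\varPhi|\to 1$ at infinity shows it has no cokernel. Hence $\mu^{-1}(0)$ is cut out transversally, the quotient is smooth, and freeness (modulo the center) holds because charge-$k$ monopoles are irreducible. The dimension of the tangent space is then the index of this Fredholm operator, which a Callias-type index theorem evaluates as $4k$ for the framed problem. Finally, the translations of $\mathbb{R}^3$ together with the asymptotic phase rotation act freely and triholomorphically and span a four-dimensional family of directions (center of mass and overall phase); removing them yields the centered moduli space $\text{Mon}_k^\circ(\mathbb{R}^3)$ as a smooth hyperk\"ahler manifold of dimension $4k-4$, which is positive precisely when $k \geq 2$.

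The main obstacle is analytic rather than formal: the hyperk\"ahler quotient construction is routine once the geometry is in place, but choosing weighted function spaces that correctly capture the monopole asymptotics, proving that the linearized operators are Fredholm on these spaces, establishing the vanishing theorem needed for transversality, and verifying the properness required for the quotient to be Hausdorff are all delicate. Likewise the index computation giving $4k$ requires the Callias index theorem with its boundary contribution at infinity. These are exactly the analytic results established by Taubes, Jaffe--Taubes and Atiyah--Hitchin, which I would invoke rather than reprove.
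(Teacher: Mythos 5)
The paper offers no proof of this lemma---it is quoted as background from Atiyah--Hitchin \cite{MR934202}---so your proposal must be measured against the argument in the cited literature, and in outline it reproduces exactly that argument: the flat quaternionic structure on the configuration space of pairs $(A,\varPhi)$, the gauge group acting by triholomorphic isometries with hyperk\"ahler moment map $*F_A - d_A\varPhi$, Taubes' weighted Fredholm and slice theory for smoothness, and the Callias-type index computation giving $4k$ for the framed deformation problem. One bookkeeping point: the hyperk\"ahler quotient statement applies to the quotient by the \emph{based} gauge group (the framed moduli space, of dimension $4k$); the quotient by the full gauge group $\mathcal{G}$ appearing in the paper's definition of $\text{Mon}_k(\mathbb{R}^3)$ is $(4k-1)$-dimensional and cannot carry a hyperk\"ahler structure, the discrepancy being the circle of asymptotically constant gauge rotations about $\varPhi$---precisely the ``phase'' you later divide out. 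Your write-up slides between these two quotients, though the intended picture (work with the framed space, then remove $\mathbb{R}^3 \times S^1$) is the standard one.

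The genuine gap is the final inference: a free action by triholomorphic isometries does not, by itself, endow the quotient with a hyperk\"ahler structure. The K\"ahler forms are not basic ($\iota_V \omega_I \neq 0$ along orbit directions $V$), and note that an honest hyperk\"ahler quotient by a four-dimensional group would cut the dimension by $16$, not $4$; so ``removing'' the translation and phase directions requires a separate argument. There are two standard repairs. First, Atiyah--Hitchin's splitting theorem: the translation and phase vector fields are parallel for the $L^2$ metric, so the framed moduli space decomposes, up to a finite cover, as an isometric and triholomorphic product $M_k \cong \mathbb{R}^3 \times (S^1 \times M_k^0)/\mathbb{Z}_k$, and the centered space $M_k^0$ inherits the metric and all three complex structures as the orthogonal factor. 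Second, one can realize the centered space as the hyperk\"ahler quotient of the framed $M_k$ by the phase circle alone, whose hyperk\"ahler moment map is exactly the center-of-mass map to $\mathbb{R}^3$: then $\dim\bigl(\mu^{-1}(0)/S^1\bigr) = (4k-3) - 1 = 4k-4$ and the hyperk\"ahler structure comes from the quotient construction itself. With either of these inserted, your outline matches the proof in the source the paper cites.
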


For any axis in $\mathbb{R}^3$, i.e., a line which passes through the origin, and any charge $k \in \mathbb{Z}_{\geq 2}$, there is a unique centered monopole in $\text{Mon}_k^{\circ}(\mathbb{R}^3)$ which is invariant under rotation around this given axis. This introduces an $\mathbb{RP}^2$ family of monopoles in $\text{Mon}_k^{\circ}(\mathbb{R}^3)$, called the axisymmetric monopoles and denoted by $\Sigma_k$. This surface is a minimal $\mathbb{RP}^2$ in $\text{Mon}_k^{\circ}(\mathbb{R}^3)$. 

In this writing, we mainly focus on the case where $k=2$. The 4-manifold $\text{Mon}_2^{\circ}(\mathbb{R}^3)$ is called the Atiyah-Hitchin space. It is an ALF space and is asymptotic to a $U(1)$-bundle over $\mathbb{R}^3/\mathbb{Z}_2$. There is a 2-convex function
\begin{align*}
r^2: \text{Mon}_2^{\circ}(\mathbb{R}^3) \to \mathbb{R},
\end{align*} 
where $r$ denotes the geodesic distance from the minimal surface $\Sigma_2$. This can be used to prove that there is no other closed minimal surface in the Atiyah-Hitchin space.

\begin{theorem}[Tsai-Wang \cite{MR4516042}] \label{Tsai-Wang-thm}
Let $\Sigma_2$ denote the minimal surface of axisymmetric monopoles in the Atiyah-Hitchin space. Then,
\begin{itemize}
    \item $\Sigma_2$ is the only compact minimal surface in $\text{Mon}_2^{\circ}(\mathbb{R}^3)$.
    \item $\Sigma_2$ is a calibrated submanifold, and therefore, it minimizes the area within its homology class.
    \item There is no compact 3-dimensional minimal submanifold in $\text{Mon}_2^{\circ}(\mathbb{R}^3)$.
\end{itemize}
\end{theorem}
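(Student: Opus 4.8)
The plan is to derive all three conclusions from the convexity properties of $\phi = r^2$, the squared geodesic distance from $\Sigma_2$, via the elementary identity relating the ambient Hessian to the intrinsic Laplacian on a minimal submanifold. For $\phi \in C^{\infty}(\text{Mon}_2^{\circ}(\mathbb{R}^3))$ and an immersed $N$ with mean curvature vector $H$ one has
\[
\Delta_N(\phi|_N) = \mathrm{tr}_{TN}\,\mathrm{Hess}\,\phi + \langle \nabla \phi, H \rangle,
\]
which for minimal $N$ reduces to $\Delta_N(\phi|_N) = \mathrm{tr}_{TN}\,\mathrm{Hess}\,\phi$. Call $\phi$ (strictly) $p$-convex if $\mathrm{tr}_P\,\mathrm{Hess}\,\phi \geq 0$ ($>0$) for every $p$-plane $P$, equivalently if the sum of the $p$ smallest eigenvalues of $\mathrm{Hess}\,\phi$ is nonnegative (positive). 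The whole argument then reduces to pinning down the eigenvalue profile of $\mathrm{Hess}(r^2)$ and feeding it into this formula together with the maximum principle on compact $N$.

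First I would compute $\mathrm{Hess}(r^2)$ using the cohomogeneity-one description of the Atiyah-Hitchin metric under the $SO(3)$-action. In a frame adapted to the orbits (two orbit directions, one radial direction, and the direction transverse to $\Sigma_2$) the Hessian is diagonal, and its four eigenvalues are explicit functions of $r$ determined by the elliptic-integral coefficients of the metric. The goal is to establish two facts: (i) strict $2$-convexity on $\text{Mon}_2^{\circ}(\mathbb{R}^3) \setminus \Sigma_2$, with the sum of the two smallest eigenvalues vanishing exactly on $\Sigma_2$; and (ii) at points of $\Sigma_2$ the Hessian in the two normal directions equals $2\,\mathrm{id}$, so that $r^2$ attains a nondegenerate minimum transverse to $\Sigma_2$. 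I expect (i) to be the main obstacle: one must control the four eigenvalues uniformly on the whole manifold, and in particular on the ALF end where $r \to \infty$ and the geometry collapses to a circle fibration over $\mathbb{R}^3/\mathbb{Z}_2$, while the metric is available only through elliptic integrals.

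Granting (i)--(ii), the two non-existence statements follow from integration. If $N$ is a compact minimal surface then $0 = \int_N \Delta_N(r^2|_N) = \int_N \mathrm{tr}_{TN}\,\mathrm{Hess}(r^2)$, while $2$-convexity makes the integrand nonnegative; hence it vanishes identically, and strictness off $\Sigma_2$ forces $N \subset \Sigma_2$, whence $N = \Sigma_2$ by dimension and connectedness. For a compact minimal $3$-fold I first note that strict $2$-convexity already yields strict $3$-convexity off $\Sigma_2$: if $\lambda_1 + \lambda_2 > 0$ with $\lambda_1 \leq \lambda_2 \leq \lambda_3$, then $\lambda_2 > 0$, so $\lambda_1 + \lambda_2 + \lambda_3 > 0$; and on $\Sigma_2$ the sum of the three smallest eigenvalues is $0 + 0 + 2 > 0$ by (ii). Thus $r^2$ is strictly $3$-convex everywhere, so for any compact minimal $3$-fold $N$ one would get $\int_N \Delta_N(r^2|_N) = \int_N \mathrm{tr}_{TN}\,\mathrm{Hess}(r^2) > 0$, contradicting $\int_N \Delta_N(\cdot) = 0$; hence no such $N$ exists.

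For the calibration statement I would use that the strict $2$-convexity of the squared distance from $\Sigma_2$ is precisely the strong-stability condition from which a calibrating form can be manufactured. Note that $\Sigma_2 \cong \mathbb{RP}^2$ is non-orientable and, being $SO(3)$-invariant, is holomorphic for no member of the hyperkähler $S^2$ of complex structures (invariance under a single complex structure would, by $SO(3)$-equivariance, force invariance under all of them), so the calibration is genuinely not a Kähler form; instead it is a closed $2$-form, with values in the orientation bundle of $\Sigma_2$, built from the transverse geometry of the convex function $r^2$, in the spirit of the correspondence between $\dim\Sigma$-convexity of $r_\Sigma^2$ and the existence of a calibration. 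Passing to the oriented double cover and descending, this exhibits $\Sigma_2$ as a calibrated submanifold and shows it minimizes area in its $\mathbb{Z}_2$-homology class, which both establishes the calibration claim and reconfirms the area-minimality, completing all three parts.
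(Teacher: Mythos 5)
You cannot really be compared against ``the paper's own proof'' here: the paper does not prove this statement at all --- it is quoted verbatim from Tsai--Wang \cite{MR4516042}, and the only internal hint is the remark in Section 2 that the $2$-convex function $r^2$ on the Atiyah--Hitchin space ``can be used to prove that there is no other closed minimal surface.'' Your skeleton --- the identity $\Delta_N(\phi|_N)=\mathrm{tr}_{TN}\,\mathrm{Hess}\,\phi+\langle\nabla\phi,H\rangle$, integration over a compact minimal $N$, strict $2$-convexity of $r^2$ off $\Sigma_2$, the eigenvalue profile $(0,0,2,2)$ along $\Sigma_2$, and the ordering observation that $\lambda_1+\lambda_2>0$ forces $\lambda_1+\lambda_2+\lambda_3>0$ --- is in fact the correct mechanism behind Tsai--Wang's first and third bullets, so the plan is aimed at the right target. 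But as a proof it has two genuine gaps. First, the entire analytic content is deferred at your step (i). Before one can even speak of $\mathrm{Hess}(r^2)$ globally, one must show $r$ is smooth on all of $\mathrm{Mon}_2^{\circ}(\mathbb{R}^3)$, i.e.\ that the normal exponential map of $\Sigma_2$ is a diffeomorphism onto the whole space with no focal or cut locus --- you assume this silently. Then the sum of the two smallest eigenvalues must be controlled uniformly, through the elliptic-integral coefficients of the cohomogeneity-one metric, all the way out along the ALF end where the eigenvalues degenerate with the collapsing circle fiber. This is the bulk of Tsai--Wang's paper; flagging it as ``the main obstacle'' does not discharge it. (A smaller issue in the same bullet: an immersed compact minimal surface could a priori be the double cover $S^2\to\Sigma_2\cong\mathbb{RP}^2$; uniqueness must be phrased for images, or embeddedness invoked.)

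The second gap is the calibration bullet, where your proposed derivation inverts the known implication. There is no general principle by which strict $2$-convexity of $r_\Sigma^2$ (equivalently, strong stability) ``manufactures'' a calibrating form: in the Tsai--Wang circle of ideas the arrow points the other way --- calibrated submanifolds in special-holonomy spaces are strongly stable, and strong stability yields convexity of $r^2$ near $\Sigma$ and local uniqueness, not a closed form of comass one. Convexity of $\mathrm{Hess}(r^2)$ is an open, pointwise condition concentrated near $\Sigma_2$, whereas a calibration must be \emph{closed} and satisfy a global comass bound; nothing in your transverse data produces closedness, and your sketch (``a closed $2$-form with values in the orientation bundle, built from the transverse geometry of $r^2$'') has no construction behind it. Tsai--Wang instead exhibit the calibration explicitly, adapted to the $SO(3)$-invariant structure (your observation that $\Sigma_2$ is holomorphic for no member of the hyperk\"ahler $S^2$ is correct and shows the calibration cannot be a K\"ahler form, but that only tells you what the form is not). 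Note that this bullet carries real weight in the present paper: the proof of Theorem \ref{Main-theorem} rules out non-removable singularities precisely because $\Sigma_2$ is homologically/homotopically nontrivial --- it generates $\pi_2(\mathrm{Mon}_2^{\circ}(\mathbb{R}^3))$ --- and the calibration is what certifies this nontriviality, so it cannot be waved through by analogy with the convexity argument.
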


 \section{The energy bound}\label{sec:energy-bound}

An essential assumption in Walpuski's compactness theorem is that there is a bound on the energy of the Fueter sections of the sequence, $1/2\|\nabla f_i\|_{L^2}^2 \leq c_e$. This assumption is required since, unlike the theory of pseudo-holomorphic curves in symplectic manifolds with compatible almost complex structures, the energy of Fueter sections is not necessarily a topological quantity. 

The following theorem asserts that in the case of the charge 2 monopole bundles, where the images of the Fueter sections fall inside a compact subset of the bundle, there is a bound on the energy of the Fueter sections.

\begin{theorem}\label{energy-bound}
    Let $(M,g)$ be a closed oriented Riemannian 3-manifold. Let $\mathfrak{X}_2 \to M$ be the monopole bundle with charge 2. Let $K \subset \mathfrak{X}_2$ be a compact subset of the monopole bundle. There is a constant $c_K$ which depends on $K$ and $(M,g)$ such that for any Fueter section $f \in \Gamma(\mathfrak{X}_2)$ with $\text{Image}(f) \subset K$, we have 
    \begin{align*}
        \|\nabla f\|_{L^2(M)} \leq c_K r(K),
    \end{align*}
    here 
    \begin{align*}
        r(K) = \sup_{y \in K} \{ r(y) \} < \infty,
    \end{align*}
    where $r: \text{Mon}_2^{\circ}(\mathbb{R}^3) \to \mathbb{R}$ is the radius function.
\end{theorem}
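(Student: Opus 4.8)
The plan is to reduce the global energy estimate to a pointwise differential inequality for the scalar function $u := r^2 \circ f$ on $M$ and then integrate over the closed manifold. Writing $v_i := \nabla_{e_i} f$ for a local orthonormal frame $(e_1,e_2,e_3)$ and using $\Delta = \operatorname{tr}_g \nabla^2$ (so that $\int_M \Delta u = 0$ on a closed manifold), the chain rule for a function composed with a section gives
$$\Delta u = \sum_{i=1}^3 \operatorname{Hess}(r^2)(v_i, v_i) + d(r^2)\big(\tau(f)\big),$$
where $\tau(f) = \operatorname{tr}_g \nabla^2 f$ is the tension field. The goal is to show that the first (Hessian) term dominates a definite multiple of $|\nabla f|^2$ while the second (tension) term is a harmless error of size $O(r(K)^2)$; integrating the resulting inequality $\Delta u \ge \tfrac{c_1}{2} |\nabla f|^2 - c_2\, r(K)^2$ over $M$ then yields $\tfrac{c_1}{2}\|\nabla f\|_{L^2}^2 \le c_2\, r(K)^2 \operatorname{Vol}(M)$, which is the claim with $c_K = (2c_2\operatorname{Vol}(M)/c_1)^{1/2}$. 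This also makes the two dependencies transparent: the factor $r(K)$ is the size of $dr$ along the image, while $c_K$ absorbs the lower bound for the convexity of $r^2$ on $K$ and the curvature of $(M,g)$.

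Controlling the tension term is the easier half, but it is where the precise power of $r(K)$ is secured. Fueter sections are harmonic up to curvature: differentiating $\mathfrak{F}(f)=0$ and contracting with $I,J,K$ gives $\tau(f)=0$ identically over flat space with parallel complex structures, and in the curved setting the correction splits into first-order terms — involving the Riemann curvature of $(M,g)$ and the covariant derivative of $I\colon STM \to \mathfrak{b}(\mathfrak{X}_2)$ paired with $\nabla f$ — and a zeroth-order term coming from the curvature of the Levi--Civita--induced connection on $\mathfrak{X}_2$. The latter acts through the infinitesimal $SO(3)$-action on the fibres and is therefore tangent to the $SO(3)$-orbits; since $r$ is $SO(3)$-invariant, $d(r^2)$ annihilates it. Only the first-order terms contribute to $d(r^2)(\tau(f))$, and with $|d(r^2)| = 2r|dr| \le 2r(K)$ Young's inequality gives $|d(r^2)(\tau(f))| \le \tfrac{c_1}{2}|\nabla f|^2 + c_2 r(K)^2$ with $c_2 = c_2((M,g),K)$.

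The heart of the argument — and the step I expect to be the main obstacle — is the pointwise algebraic inequality
$$\sum_{i=1}^3 \operatorname{Hess}(r^2)(v_i, v_i) \ge c_1 \sum_{i=1}^3 |v_i|^2$$
for every triple satisfying the Fueter constraint $Iv_1 + Jv_2 + Kv_3 = 0$. The difficulty is that $r^2$ is only $2$-convex — a condition on $2$-planes — whereas the differential of a Fueter map can have rank $3$, so naive convexity of $r^2$ along the image is unavailable. The observation that reconciles the two is that, setting $w_1 = Iv_1$, $w_2 = Jv_2$, $w_3 = Kv_3$, the Fueter equation reads exactly $w_1 + w_2 + w_3 = 0$; hence the rotated energy tensor $\sum_i w_i w_i^{\mathsf T}$ is built from only two independent vectors and has rank $\le 2$, while $v_i = -S_i w_i$ (with $S_1=I,\,S_2=J,\,S_3=K$) and $\sum_i |v_i|^2 = \sum_i |w_i|^2$. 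This lets the $2$-convexity of $r^2$ enter through a genuinely $2$-dimensional quantity.

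What remains delicate is that the three surviving terms involve three \emph{distinct} complex structures $S_i$ acting on the $w_i$, so $\operatorname{Hess}(r^2)$ is not simply evaluated as a trace over a fixed $2$-plane with equal weights; a purely spectral argument from $2$-convexity alone is therefore not enough, as one can load an adversarial Hessian unevenly on the two smallest eigendirections. Reconciling this requires the finer structure of $\operatorname{Hess}(r^2)$ on the Atiyah--Hitchin space, namely its $SO(3)$-equivariance (inherited from the $SO(3)$-invariance of $r$, since $\Sigma_2$ is $SO(3)$-invariant and $SO(3)$ rotates $I,J,K$) together with the quantitative $2$-convexity of Tsai--Wang (Theorem \ref{Tsai-Wang-thm}). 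Establishing from these a uniform positive lower bound $c_1 = c_1((M,g),K) > 0$ valid on all of $K$ — in particular controlling the degeneration of the eigenvalues of $\operatorname{Hess}(r^2)$ near $\Sigma_2$ and in the ALF end — is the crux of the proof.
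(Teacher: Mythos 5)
There is a genuine gap, and it sits exactly where you yourself locate the ``heart of the argument'': the pointwise coercivity inequality $\sum_{i=1}^{3}\operatorname{Hess}(r^2)(v_i,v_i)\ \geq\ c_1\sum_{i=1}^{3}|v_i|^2$ for triples satisfying $Iv_1+Jv_2+Kv_3=0$ is not merely left unproven --- it is false whenever $K$ meets the minimal surface $\Sigma_2$, which is the interesting case. Consider the map $\varPhi:\mathbb{R}^3\setminus\{0\}\to \text{Mon}_2^{\circ}(\mathbb{R}^3)$ sending $x$ to the unique centered charge-2 monopole invariant under rotation about the axis through $x$; this is a nonconstant Fueter map with $\operatorname{Image}(\varPhi)=\Sigma_2$ (the paper itself records this map in a remark after the proof of the main theorem). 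At any $p\in\Sigma_2$ the triple $v_i=d\varPhi(e_i)$ is a nonzero solution of the algebraic Fueter constraint with every $v_i$ tangent to $\Sigma_2$. But $r^2\equiv 0$ and $d(r^2)\equiv 0$ along $\Sigma_2$, so $\operatorname{Hess}(r^2)(v,\cdot)=0$ for every $v\in T_p\Sigma_2$; hence the left-hand side of your inequality vanishes while the right-hand side is strictly positive, and no constant $c_1>0$ can exist. This is not an artifact of insufficient information about the Hessian: $2$-convexity of $r^2$ (the Tsai--Wang input) can only ever give nonnegativity of such sums, because the Hessian necessarily degenerates identically in the directions tangent to the minimal $\mathbb{RP}^2$. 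With only $\sum_i\operatorname{Hess}(r^2)(v_i,v_i)\geq 0$, your integrated identity $\int_M\Delta u=0$ gives no control whatsoever on $\|\nabla f\|_{L^2}$, so the argument collapses. (The subsidiary claims about the tension field --- that the zeroth-order curvature term is tangent to the $SO(3)$-orbits and killed by $d(r^2)$, and the Young-inequality bound on the remaining terms --- are plausible but also unproven; they are not the fatal point.)

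For contrast, the paper's proof avoids convexity of $r^2$ entirely; the radius function enters only as a growth rate, not through its Hessian. The permuting $SO(3)$-action on $\text{Mon}_2^{\circ}(\mathbb{R}^3)$ makes each fiberwise K\"ahler form exact, $\omega_i=d\alpha_i$ with $\alpha_i=\iota_{v_{i+1}}\omega_{i+2}$, and for a Fueter section the energy is $\frac{1}{2}\|\nabla f\|_{L^2}^2=-\int_M\Lambda$ with $\Lambda=\sum_i\delta^i\wedge f^*(\omega_{I(e_i)})$. Integrating by parts over a cover by coordinate balls (the boundary contributions cancel) and using the linear growth $|\alpha_{I(e_i)}|\leq c\,r$ along the image gives $|f^*(\alpha_{I(e_i)})|\leq c\,r(K)\,|df|$, whence $\frac{1}{2}\|\nabla f\|_{L^2}^2\leq c_K\,r(K)\,\|\nabla f\|_{L^2}$ and the stated bound. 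Because $r$ appears there only through the size of the symplectic primitives, the presence of $\Sigma_2$ --- precisely where your Hessian degenerates --- causes that argument no difficulty. If you want to salvage a maximum-principle style proof, you would need a substitute function whose Hessian is coercive on Fueter triples uniformly on $K$, and the example above shows $r^2$ cannot be that function.
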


\begin{proof}
Let $x \in M$. Let $\delta^1, \delta^2$, and $\delta^3$ be a local orthonormal coframe on a neighbourhood of $x$, dual to a local frame $e_1, e_2$, and $e_3$, respectively. Let $\Lambda \in \Omega^3(M)$ be a 3-form on $M$, which in this local coordinate system is given by
\begin{align*}
    \Lambda = \sum_{i=1}^3 \delta^i \wedge f^*(\omega_{I(e_i)}),
\end{align*}
where $\omega_{I(e_i)}$ is the symplectic structure on the fibers of $\mathfrak{X}_2$ associated to the complex structure $I(e_i)$. This 3-form is independent of the chosen local orthonormal frame, and therefore, it is defined globally on $M$. 

For any section $f \in \Gamma(\mathfrak{X}_2)$, we have 
\begin{align*}
    \|\nabla f\|_{L^2}^2 = \| \mathfrak{F}(f)\|_{L^2}^2 - 2\int_{M} \Lambda.
\end{align*}
This identity follows from a pointwise computation as in \cite[Lemma 2.2]{MR2529942}. Therefore, if $f$ is a Fueter section,
\begin{align*}
    \mathcal{E}(f) =  - \int_{M} \Lambda.
\end{align*}
The standard $SO(3)$ action on $\mathbb{R}^3$ induces an action of $SO(3)$ on $\text{Mon}^{\circ}_k(\mathbb{R}^3)$ for any $k \in \mathbb{Z}_{\geq 2}$. Hence, this action induces an action of $SO(3)$ on the 2-sphere family of K\"ahler structures of $\text{Mon}^{\circ}_k(\mathbb{R}^3)$, denoted by $\mathfrak{b}(\text{Mon}^{\circ}_k(\mathbb{R}^3))$,
\begin{align*}
    \mathfrak{b}(\text{Mon}^{\circ}_k(\mathbb{R}^3)) = \{aI + bJ + cK \; | \; a^2+b^2+c^2\} \cong S^2.
\end{align*}
This induced action is the standard action of $SO(3)$ on $S^2 \subset \mathbb{R}^3$. Such an $SO(3)$ action on a hyperk\"ahler manifold is called a permuting action. 

In the presence of a permuting $SO(3)$-action on any hyperk\"ahler manifold $X$, every symplectic form $\omega$ in the 2-sphere family of K\"ahler structures is exact. Let $(\tau_1, \tau_2, \tau_3)$ be a basis for $\mathfrak{so}(3)$. Then,
\begin{align*}
    \omega_i = d \alpha_i \quad \text{ with } \quad \alpha_i := \iota_{v_{i+1}}\omega_{i+2} \in \Omega^1(M),
\end{align*}
where $v_i$ is the generating vector field of the infinitesimal action of $\tau_i$ on $X$, $i \in \{1,2,3\}$, and with the convention $3+1=1$.

In the fiber bundle setting, we have $\omega_{I(e_i)} = d \alpha_{I(e_i)}$, where $\omega_{I(e_i)}$ and $\alpha_{I(e_i)}$ are forms on the fibers of the hyperk\"ahler bundle. 

Let $x \in M$. Let $B_{\epsilon}(x)$ be a coordinate neighbourhood around $x$. By integration by parts, 
\begin{align*}
    \frac{1}{2}\|\nabla f\|_{L^2(B_{\epsilon}(x))}^2 &=  - \int_{B_{\epsilon}(x)} \sum_{i=1}^3 \delta^i \wedge f^*(d\alpha_{I(e_i)}) = 
    - \int_{B_{\epsilon}(x)} \sum_{i=1}^3 \delta^i \wedge d(f^*(\alpha_{I(e_i)}))\\& =
     - \int_{B_{\epsilon}(x)} \sum_{i=1}^3 d\delta^i \wedge f^*(\alpha_{I(e_i)}) +  \int_{\partial B_{\epsilon}(x)} \sum_{i=1}^3 \delta ^i \wedge f^* (\alpha_{I(e_i)}).
\end{align*}
Let $\beta \in \Omega^2(M)$ be the 2-form which on each coordinate ball is defined by 
\begin{align*}
    \beta = \sum_{i=1}^3 \delta ^i \wedge f^* (\alpha_{I(e_i)}).
\end{align*}
Note that this 2-form is independent of the chosen orthonormal coframe, and therefore, it is defined globally on $M$. 

Let $B_1, \hdots, B_n$ be $n$ disjoint coordinate balls such that $M = \overline{B_1} \cup \hdots \cup \overline{B_n}$. The integral of $\beta$ over the boundaries $\partial B_1, \hdots, \partial B_n$ cancel each other since the same integrand at each point in $\partial B_1 \cup \hdots \cup \partial B_n$ will appear in two different integrals with opposite signs. 

Let $(\delta^{1,j}, \delta^{2,j}, \delta^{3,j})$ be an orthonormal coframe on $B_j$ dual to $(e_{1,j}, e_{2,j}, e_{3,j})$ for each $j \in \{1, \hdots, n\}$. By the H\"older's inequality, we have
\begin{align*}
    \frac{1}{2}\|\nabla f\|_{L^2(M)}^2 &= 
     - \sum_{j=1}^n \int_{B_j} \sum_{i=1}^3 d\delta^{i,j} \wedge f^*(\alpha_{I(e_{i,j})})  \\& \leq  \sum_{j=1}^n \sum_{i=1}^3
     \|d \delta^{i,j}\|_{L^2(B_j)} \|f^*(\alpha_{I(e_{i,j})})\|_{L^2(B_j)}.
\end{align*}
Terms $\|d \delta^{i,j}\|_{L^2}$ depend only on the geometry of $(M,g)$, and therefore, $\|d \delta^{i,j}\|_{L^2} \leq c_1$ for a constant $c_1$. 

On the other hand, at any point $x \in M$, we have the pointwise estimate
\begin{align*}
    |f_x^*(\alpha_{I(e_i)})| = 
    |\alpha_{I(e_i)}(f(x)) \circ d_xf| \leq \left( \sup_{y \in M} |\alpha_{I(e_i)}(f(y))| \right) |d_xf|.
\end{align*}
Let $r: \text{Mon}_2^{\circ}(\mathbb{R}^3) \to \mathbb{R}$ denote the geodesic distance from the axisymmetric minimal $\mathbb{RP}^2$ in the moduli spaces of centered 2-monopoles. We have 
\begin{align*}
    |\alpha_{I(e_i)}(f(x))| \leq c_2 r(f(x)),
\end{align*}
for a constant $c_2$.

To see this, note that the monopole bundle with charge 2 over $M$ is asymptotic to a $U(1)$-bundle over $T^*M/\mathbb{Z}_2$. 1-form $\alpha_{I(e_i)}$, in the Gibbons-Hawking coordinates, is asymptotic to $x_i \wedge \theta_0 + dx_{i+1} \wedge dx_{i+2}$ which grows linearly.

Therefore, when $\text{Image}(f) \subset K$, we have
\begin{align*}
    |f^*(\alpha_{I(e_i)})| \leq c_2 r(K) |df(e_i)|.
\end{align*}
Hence,
\begin{align*}
    \|f^*(\alpha_{I(e_i)})\|_{L^2(M)} \leq c_3 r(K) \|df\|_{L^2(M)} 
    \leq c_4 r(K) \|\nabla f\|_{L^2(M)}.
\end{align*}
for constants $c_3$ and $c_4$.

This shows
\begin{align*}
    \frac{1}{2}\|\nabla f\|_{L^2(M)}^2 =  - \int_M \Lambda \leq c_K r(K) \|\nabla f\|_{L^2(M)},
\end{align*}
for a constant $c_K$, and therefore, 
\begin{align*}
    \frac{1}{2}\|\nabla f\|_{L^2(M)} \leq c_K= r(K).
\end{align*}
\end{proof}
    
\section{Bubbling}\label{sec:bub} 

In this section, we show that in the case where the fibers of the hyperk\"ahler bundle are modeled on the moduli spaces of centered monopoles on $\mathbb{R}^3$, the bubbling phenomenon does not occur.

\begin{theorem}\label{no-bubbling-locus}
    Let $(M,g)$ be a closed oriented Riemannian 3-manifold. Let $\mathfrak{X}_k \to M$ be the monopole bundle with charge $k$, and let $K \subset \mathfrak{X}_k$ be a compact subset. Let $(f_i)_{i=1}^{\infty}$ be a sequence of Fueter sections such that $\text{Image}(f_i) \subset K$. Then, after passing to a subseqeunce, $f_i \to f$ in $C^{\infty}_{\text{loc}}$, away from a closed rectifiable set $S \subset M$ where $S = \text{sing}(f)$, and therefore, $\mathcal{H}^1(S) = 0$. In other words, the bubbling locus $\Gamma \subset S$ vanishes.
\end{theorem}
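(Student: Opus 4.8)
The plan is to rule out non-trivial holomorphic spheres in the fibers, which by Walpuski's theorem is precisely what forces the bubbling locus to be non-empty. By the last bullet of Theorem \ref{prop:wal3}, at every smooth point $x \in \Gamma$ --- a point where the tangent line $T_x\Gamma$ exists and $x \notin \text{sing}(f)$ --- there is a non-trivial $-I(v)$-holomorphic sphere $\zeta_x : \nu_x\Gamma \cup \{\infty\} \to \mathfrak{X}_x$, with $v \in T_x\Gamma$ a unit vector; since $\Gamma$ is $1$-dimensional in the $3$-manifold $M$, the normal space $\nu_x\Gamma$ is $2$-dimensional and $\nu_x\Gamma \cup \{\infty\} \cong S^2$. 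I will show no such sphere can exist, so that $\Gamma$ has no smooth points.

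The key input is the exactness of the hyperk\"ahler symplectic forms on the fibers, already established in the proof of Theorem \ref{energy-bound}. The fiber $\mathfrak{X}_x$ is isometric to $\text{Mon}_k^{\circ}(\mathbb{R}^3)$, which carries a permuting $SO(3)$-action; for such an action every K\"ahler form $\omega_{I(w)}$ in the $2$-sphere family is exact, $\omega_{I(w)} = d\alpha_{I(w)}$, as recorded there. Because the symplectic forms depend linearly on the complex structure, $\omega_{aI+bJ+cK} = a\omega_I + b\omega_J + c\omega_K$, this exactness passes to the arbitrary compatible structure $-I(v) \in \mathfrak{b}(\mathfrak{X}_x)$: we have $\omega_{-I(v)} = d\alpha_{-I(v)}$. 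Since $\zeta_x$ is pseudo-holomorphic for this K\"ahler structure, the standard identity for pseudo-holomorphic maps gives
\begin{align*}
0 < \int_{S^2} \zeta_x^* \omega_{-I(v)},
\end{align*}
the integral being the area of the non-constant curve $\zeta_x$.

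Next I would apply Stokes' theorem. As the continuous image of a compact set, $\zeta_x(S^2)$ is compact, so although $\alpha_{-I(v)}$ grows at infinity of the fiber it is bounded on this image, and $\zeta_x^*\alpha_{-I(v)}$ is a smooth $1$-form on the closed surface $S^2$. Hence
\begin{align*}
\int_{S^2} \zeta_x^* \omega_{-I(v)} = \int_{S^2} d\left(\zeta_x^*\alpha_{-I(v)}\right) = 0,
\end{align*}
contradicting the strict positivity above. Therefore there are no non-trivial holomorphic spheres in any fiber, and $\Gamma$ contains no smooth points.

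Finally I assemble the conclusion. Since $\Gamma$ is $\mathcal{H}^1$-rectifiable, $\mathcal{H}^1$-almost every point of $\Gamma$ is a smooth point, and since $\mathcal{H}^1(\text{sing}(f)) = 0$, almost every such point lies outside $\text{sing}(f)$; having shown no such point exists, we obtain $\mathcal{H}^1(\Gamma) = 0$. As $\Gamma = \text{supp}(\Theta\,\mathcal{H}^1|_S)$ with $\Theta \geq \epsilon_0 > 0$, this measure vanishes and $\Gamma = \emptyset$, whence $S = \text{sing}(f)$ and $\mathcal{H}^1(S) = 0$. The main obstacle is conceptual rather than computational: recognizing that it is the permuting $SO(3)$-symmetry --- the very mechanism behind the energy bound --- and not any finer feature of the monopole moduli space that obstructs bubbling, and verifying that exactness survives passage to the arbitrary compatible complex structure $-I(v)$. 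Note this argument is uniform in $k$, matching the generality of the statement.
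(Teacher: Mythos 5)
Your proposal is correct and follows essentially the same route as the paper: Walpuski's bubbling theorem reduces everything to excluding non-constant holomorphic spheres in the fibers, which are ruled out exactly as you do --- exactness of every symplectic form in the $2$-sphere family (via the permuting $SO(3)$-action) plus the positivity-of-area versus Stokes contradiction --- and the final rectifiability/measure argument forcing $\Gamma = \operatorname{supp}(\Theta\,\mathcal{H}^1|_S) = \emptyset$ matches the paper's conclusion. Your explicit remarks (linearity of $\omega$ in the complex structure, compactness of $\zeta_x(S^2)$ justifying Stokes despite the growth of the primitive) are minor fleshings-out of steps the paper leaves implicit.
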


The rest of this section is devoted to the proof of Theorem \ref{no-bubbling-locus}. The proof is based on an observation of Walpuski that at any smooth point in the bubbling locus $x \in \Gamma$, the “lost energy” goes into the formation of bubbles in the fiber above $x$.

\begin{theorem}[Walpuski \cite{MR3718486}]\label{bubble} Suppose we are in the situation of Theorem \ref{prop:wal3}. Suppose the bubbling locus is smooth at $x \in \Gamma$, i.e., $T_x \Gamma$ exists and $x \notin \text{sing}(f)$. Then, there exists a non-trivial $(-I(v))$-holomorphic sphere $\zeta_x : \nu_x \Gamma \cup \{\infty\} \to \mathfrak{X}_x$, where $v$ is a unit tangent vector in $T_x \Gamma$, and  $\nu_x \Gamma$ is the fiber of the normal bundle of $\Gamma \subset M$  at $x$. 
\end{theorem}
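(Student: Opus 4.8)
The plan is to produce the bubble by a blow-up (rescaling) argument at the smooth point $x\in\Gamma$, in the spirit of the Sacks--Uhlenbeck and Parker--Wolfson bubbling analyses adapted to the Fueter setting. First I would fix geodesic normal coordinates centered at $x$ in which the tangent line $T_x\Gamma=\mathbb{R}v$ is the $e_1$-axis and the normal plane $\nu_x\Gamma$ is spanned by $e_2,e_3$; in these coordinates the bundle $\mathfrak{X}$ trivializes over a small ball with fiber the model hyperk\"ahler manifold $(X,I,J,K)$, and at $x$ the identification sends $e_1\mapsto I$, $e_2\mapsto J$, $e_3\mapsto K$. Since $x\notin\mathrm{sing}(f)$ while $\Theta(x)\geq\epsilon_0>0$, the densities $|\nabla f_i|^2$ must blow up near $x$, so a point-selection (concentration-function) argument produces centers $x_i\to x$ and scales $r_i\downarrow 0$ for which the rescaled sections $\tilde f_i(y):=f_i(\exp_{x_i}(r_i y))$ satisfy a gradient bound $\sup|\nabla\tilde f_i|\le 1$ on balls of radius $R_i\to\infty$ while keeping $|\nabla\tilde f_i(0)|$ bounded away from $0$.

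Next I would pass to the limit. Under this rescaling the metric $r_i^{-2}\exp_{x_i}^*g$ converges in $C^\infty_{\mathrm{loc}}$ to the flat Euclidean metric, the complex structures $I(e_j)$ converge to the constant structures $(I,J,K)$ on $X$, and the connection and curvature terms scale away, so the rescaled Fueter equation converges to the flat equation $I\partial_1\bar f+J\partial_2\bar f+K\partial_3\bar f=0$ on $\mathbb{R}^3$. The uniform gradient bound together with the $\epsilon$-regularity estimate underlying Theorem \ref{prop:wal3} yields $C^\infty_{\mathrm{loc}}$ convergence $\tilde f_i\to\bar f$, where $\bar f:\mathbb{R}^3\to X$ is a non-constant Fueter map (non-constancy from $|\nabla\bar f(0)|\neq 0$). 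The almost monotonicity formula (Theorem \ref{almost-monotonicity}) applied along the sequence gives a uniform density bound, which under the present scaling forces the linear energy growth $\mathcal{E}(\bar f;B_R)\le CR$ characteristic of a configuration concentrated transversally to a line.

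The decisive step is to show that $\bar f$ is invariant in the $e_1=v$ direction, so that it descends to a map of the normal plane. Here I would exploit that $x$ is a smooth point of the $1$-rectifiable bubbling locus: the tangent measure of the limiting measure $\mu$ of Theorem \ref{prop:wal3} at $x$ is $\Theta(x)\,\mathcal{H}^1$ on the line $\mathbb{R}v$, which is translation-invariant along $v$. Combined with the flat monotonicity identity (\ref{almost-monotonicity2}), which controls the radial energy $\int\rho^{-1}|\partial_r\bar f|^2$ and rules out radial concentration in the limit, this pins the energy of $\bar f$ to be independent of $x_1$, and a translation/reparametrization argument lets me take $\bar f$ to satisfy $\partial_1\bar f=0$. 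Freezing the $e_1$-direction collapses the Fueter equation to $J\partial_2\bar f+K\partial_3\bar f=0$, equivalently $\partial_2\bar f-I\partial_3\bar f=0$, which is exactly the Cauchy--Riemann equation for the complex structure $-I=-I(v)$ on the normal plane $\nu_x\Gamma\cong\mathbb{R}^2$; the linear energy growth then descends to finite $2$-dimensional energy. Thus $\bar f$ is a finite-energy $(-I(v))$-holomorphic map $\mathbb{R}^2\to\mathfrak{X}_x$.

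Finally I would invoke the removable-singularity theorem for finite-energy pseudoholomorphic maps to extend $\bar f$ across $\infty$, producing the desired non-trivial holomorphic sphere $\zeta_x:\nu_x\Gamma\cup\{\infty\}\to\mathfrak{X}_x$; non-triviality is guaranteed because the bubble carries at least the energy quantum $\epsilon_0\le\Theta(x)$. The main obstacle I anticipate is precisely the translation-invariance step: calibrating the rescaling so that the concentrated energy is not lost into the $e_1$-direction, and rigorously identifying the blow-up limit with a single-fiber holomorphic object rather than a genuinely $3$-dimensional Fueter map. Controlling this requires careful use of the almost monotonicity formula to suppress the radial energy and to ensure the bubble remains inside the compact set $K$ throughout the rescaling.
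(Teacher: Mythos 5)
This theorem is quoted verbatim from Walpuski \cite{MR3718486} and the paper offers no proof of its own, so the only meaningful comparison is with Walpuski's original argument --- and your sketch follows essentially that same route: Lin-style point selection and rescaling at a smooth point of $\Gamma$, a blow-up limit which is a translation-invariant Fueter map on $\mathbb{R}^3$, the reduction of $J\partial_2 \bar f + K\partial_3 \bar f = 0$ to the $(-I(v))$-Cauchy--Riemann equation on $\nu_x\Gamma$ with finite two-dimensional energy, and removal of the singularity at infinity. The one place your outline is thinner than the original is the step you yourself flag: translation invariance along $v$ does not follow from invariance of the tangent measure alone (the straight tangent-measure blow-up is constant off the line, since all energy sits in the defect, so a secondary point selection off the line is genuinely needed, as you do), and in Walpuski's proof it is extracted by applying the almost monotonicity formula \ref{almost-monotonicity1} at pairs of centers along the approximate tangent line to force the $\nabla_v$-energy of the rescaled sections to vanish in the limit --- exactly the mechanism you gesture at, so the proposal is correct in outline with that step left as the real technical work.
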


Walpuski's theorem motivates the study of holomorphic spheres in the fibers of $\mathfrak{X}_k$.

\begin{lemma}\label{no-bubble}
The moduli spaces of centered monopoles on $\mathbb{R}^3$ with charge $k \in \mathbb{Z}_{\geq 2}$ do not contain any non-constant holomorphic sphere with respect to any of the complex structures in the 2-sphere family of complex structures on $\text{Mon}^{\circ}_k(\mathbb{R}^3)$.
\end{lemma}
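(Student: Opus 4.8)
The plan is to prove that $\text{Mon}_k^{\circ}(\mathbb{R}^3)$ admits no non-constant holomorphic sphere for any complex structure in its $S^2$-family by exploiting the non-compactness of these hyperk\"ahler manifolds, together with a maximum-principle argument coming from a suitable plurisubharmonic exhaustion. The key structural input is that $\text{Mon}_k^{\circ}(\mathbb{R}^3)$ is an ALF space (complete, non-compact) carrying the radius function $r$ measuring the geodesic distance from the minimal $\mathbb{RP}^2$ of axisymmetric monopoles, and in the $k=2$ case we have the sharper fact (quoted before Theorem \ref{Tsai-Wang-thm}) that $r^2$ is $2$-convex, i.e.\ strictly plurisubharmonic off $\Sigma_k$.

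First I would reduce to a statement about area-minimization or subharmonicity. A holomorphic sphere $u: S^2 \to X$ with respect to a compatible complex structure $\omega$ is a minimal surface, since holomorphic curves are calibrated by $\omega$ and hence calibrated (area-minimizing in their homology class). But $X = \text{Mon}_k^{\circ}(\mathbb{R}^3)$ is simply connected and has no nonzero $H_2$ classes that could be represented by a sphere with positive symplectic area --- more directly, because $X$ is hyperk\"ahler non-compact, all the K\"ahler forms $\omega_{I(v)}$ in the $S^2$-family are exact (this is the permuting-$SO(3)$ exactness used in Theorem \ref{energy-bound}, where $\omega_i = d\alpha_i$). The plan is then to argue: if $u$ were a non-constant $\omega$-holomorphic sphere, its symplectic area would be $\int_{S^2} u^*\omega = \int_{S^2} u^*d\alpha = \int_{S^2} d(u^*\alpha) = 0$ by Stokes, since $S^2$ is closed. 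A non-constant holomorphic curve, however, has strictly positive symplectic area $\int_{S^2}u^*\omega = \tfrac12\int_{S^2}|du|^2 > 0$, a contradiction. This settles every complex structure simultaneously, since exactness holds for the whole $S^2$-family.

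The main obstacle --- and the only place care is genuinely needed --- is that exactness of $\omega$ alone gives a vanishing-area contradiction only once we know $u^*\alpha$ is a \emph{globally} well-defined smooth $1$-form on the compact domain $S^2$, i.e.\ that $u(S^2)$ lies where $\alpha$ is defined and that $u$ has the integrability to apply Stokes. Since $\text{Image}(u)$ is compact (continuous image of $S^2$), this is automatic provided $\alpha$ is a smooth global primitive on all of $X$; I would confirm from the construction that the $\alpha_i = \iota_{v_{i+1}}\omega_{i+2}$ are honest smooth $1$-forms on all of $\text{Mon}_k^{\circ}(\mathbb{R}^3)$, with no singularity along $\Sigma_k$ (the generating vector fields $v_i$ of the permuting action are smooth, and the $\omega_i$ are smooth K\"ahler forms, so the contractions are smooth everywhere). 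Thus $\alpha$ is a genuine primitive and the Stokes argument is unobstructed.

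As a remark, the $2$-convexity of $r^2$ in the $k=2$ case gives an independent and more geometric proof via the maximum principle: restricting the strictly plurisubharmonic function $r^2$ to a holomorphic sphere $u$ yields a subharmonic function $r^2\circ u$ on the compact surface $S^2$, which must therefore be constant, forcing $u(S^2) \subset \Sigma_k$; but $\Sigma_k \cong \mathbb{RP}^2$ is not a complex submanifold (it is Lagrangian, being the fixed locus of a real structure), so no non-constant holomorphic sphere can lie in it. I would present the exactness argument as the main proof because it handles all $k \geq 2$ uniformly, and mention the convexity argument as confirmation in the $k=2$ case. I expect the write-up to be short, with the only subtlety being the clean justification that the primitive $\alpha$ is globally smooth so that the area integral genuinely vanishes.
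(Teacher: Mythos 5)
Your main argument is exactly the paper's proof: the permuting $SO(3)$-action makes every K\"ahler form in the $S^2$-family of $\text{Mon}^{\circ}_k(\mathbb{R}^3)$ exact, and then Stokes' theorem forces the symplectic area $\int_{S^2} u^*\omega$ of a holomorphic sphere to vanish, contradicting the strict positivity of area for a non-constant holomorphic curve. Your supplementary remarks (global smoothness of the primitive $\alpha_i = \iota_{v_{i+1}}\omega_{i+2}$, and the $2$-convexity/maximum-principle alternative for $k=2$) are reasonable but not needed; the paper's proof is precisely your exactness-plus-Stokes argument, so the proposal matches it essentially verbatim.
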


\begin{proof}
As mentioned earlier, all the symplectic forms in the 2-sphere family of symplectic structures on the moduli spaces of centered monopoles on $\mathbb{R}^3$ are exact.

There is no $J$-holomorphic sphere in any exact symplectic manifold $(X,\omega)$ for any $J$ compatible with $\omega$. The reason is that if $f : S^2 \rightarrow X$ is a non-trivial $J$-holomorphic sphere, 
\begin{align*}
\int_{S^2} f^* \omega = \int_{S^2} \omega(\frac{\partial f}{\partial x_1},\frac{\partial f}{\partial x_2}) dx_1 dx_2 &= \int_{S^2} \omega(\frac{\partial f}{\partial x_2},J\frac{\partial f}{\partial x_2}) dx_1 dx_2 
\\&=\int_{S^2} \|\frac{\partial f}{\partial x_2}\|^2 dx_1 dx_2 > 0,
\end{align*}
for local orthonormal frames $(\partial x_1, \partial x_2)$ at each point $x \in S^2$.

But on the other hand since $\omega$ is exact $\omega = d \alpha$ and by the Stokes' theorem we have
\begin{align*}
\int_{S^2} f^* \omega&= \int_{S^2} df^* \alpha = 0.
\end{align*}
\end{proof}

\begin{proof}[Proof of Theorem \ref{no-bubbling-locus}] Lemma \ref{no-bubble} and Theorem \ref{bubble} show that the smooth part of the bubbling locus vanishes. 

From Theorem \ref{prop:wal3} we have $\mathcal{H}^1(\Gamma) < \infty$, and therefore, the Hausdorff dimension of $\Gamma$ is at most one. However, since in the case of monopole bundles $\Gamma$ does not contain any smooth point, the Hausdorff dimension of $\Gamma$ is strictly less than one. Moreover, since $\mathcal{H}^1(\text{sing}(f)) = 0$, the Hausdorff dimension of $S$ is strictly less that one too. This shows that
    \begin{align*}
        \Theta \mathcal{H}^1_{|_{S}}=0,
    \end{align*}
and therefore,
    \begin{align*}
        \Gamma = \text{supp}(\Theta \mathcal{H}^1_{|_{S}}) = 
        \emptyset.
    \end{align*}
\end{proof}

\section{Non-removable singularities}\label{sec:non-rem-sing}

In this section, we study the non-removable singularities of Fueter sections. In Subsection \ref{non-removable-tri-hol}, we will prove the existence of tangent maps for Fueter sections and see that a tangent map of a Fueter section at a non-removable singularity satisfies the tri-holomorphic equation. In Subsection \ref{analysis-tri-holomorphic}, we will see that the image of a tri-holomorphic map is a minimal surface. Using this property, in Subsection \ref{tri-hol-mon}, we will rule out the formation of non-removable singularities for a sequence of Fueter sections of monopole bundles with charge 2.

\subsection{Non-removable singularities and tri-holomorphic-maps}\label{non-removable-tri-hol}

We start with the definition of tri-holomorphic maps.

\begin{definition}
Let $(X, g_X, I, J, K)$ be a hyperk\"ahler manifold. Let $\mathfrak{b}(X)$ be the 2-sphere family of complex structures on $X$. Let $I: S^2 \to \mathfrak{b}(X)$ be an isometric identification. A map $f: S^2 \to X$ is called an $I$-tri-holomorphic map if at each point $x \in S^2$,
\begin{align*}
        \frac{\partial f}{\partial x_1}(x)  + I(x) \frac{\partial f}{\partial x_2}(x)  = 0,
\end{align*}
where $\partial x_1, \partial x_2$ is a local orthogonal frame at $x \in S^2$.

In other words, at each point $x \in S^2$, the map $d_xf$ is complex linear with respect to the complex structure $I(x)$ on $X$. 
\end{definition}

We first state the main theorems of this section, Theorems \ref{sing-fueter}, \ref{tri-hol-image}, and \ref{non-constant-tangent}. The rest of this section is dedicated to the proofs of these theorems.

\begin{theorem}\label{sing-fueter}
Suppose we are in the situation of Theorem \ref{prop:wal3} with $\Gamma = \emptyset$. Let $x \in \text{sing}(f)$. Then, $f$ has a tangent map at $x$: let $(r_i)_{i=1}^{\infty}$ be a sequence of positive numbers converging to 0. Let $f_{r_i}$ be the map defined by 
\begin{align*}
f_{r_i}(y) = f \circ \exp_x \circ \lambda_{r_i} (y), 
\end{align*}
on an open subset of $T_xM$, where $\lambda_{r_i} (y) = r_i y$. Then, there exists a subsequence of sections $f_{r_i}$, which converges weakly to a map
\begin{align*}
\varPhi_x : T_x M \setminus \{0\} \to \mathfrak{X}_x,    
\end{align*}
such that 
\begin{align*}
        \varPhi_x(y) = \varPhi_x(ry), \quad \quad \forall r > 0.
\end{align*}
\end{theorem}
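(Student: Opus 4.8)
The plan is to establish the existence of a tangent map at a non-removable singularity $x \in \mathrm{sing}(f)$ by a standard blow-up/rescaling argument, mimicking the theory of tangent maps for energy-minimizing harmonic maps (Schoen–Uhlenbeck) and tangent cones for minimal surfaces, but using the monotonicity formula \eqref{almost-monotonicity1} from Theorem \ref{almost-monotonicity} as the central analytic tool. The key quantity to control is the normalized energy density $\Theta_r(x) := \frac{e^{cr}}{r}\int_{B_r(x)}|\nabla f|^2$. First I would verify, from \eqref{almost-monotonicity1}, that the function $r \mapsto \Theta_r(x) + c' r^2$ is monotone non-decreasing in $r$ (absorbing the $-c(r^2-s^2)$ error term into a monotone correction), so that the limit $\Theta(x) := \lim_{r \downarrow 0} \Theta_r(x)$ exists and is finite; since $x \in \mathrm{sing}(f)$ this limit is strictly positive. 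This gives the uniform energy bounds that make the rescaling procedure work.

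**Next I would carry out the blow-up.** Fix a sequence $r_i \downarrow 0$ and define $f_{r_i}(y) = f \circ \exp_x \circ \lambda_{r_i}(y)$ on a growing family of annuli in $T_xM \setminus \{0\}$. The monotonicity formula, applied after rescaling, shows that the rescaled maps $f_{r_i}$ have energy on each fixed annulus $B_R(0) \setminus B_\rho(0) \subset T_xM$ bounded uniformly in $i$ (the energy on $B_{Rr_i}(x) \setminus B_{\rho r_i}(x)$ scales correctly because $\Theta_r$ is essentially scale-invariant in the limit). The rescaled connection, hyperkähler structure, and bundle identification $I$ converge, as $r_i \to 0$, to the flat model on $T_xM \cong \mathbb{R}^3$ with the fixed fiber $\mathfrak{X}_x$, so the rescaled Fueter equation converges to the flat tri-holomorphic/Fueter equation on $\mathbb{R}^3$ valued in $\mathfrak{X}_x$. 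Using the uniform $W^{1,2}_{\mathrm{loc}}$ bounds together with weak compactness, I extract a subsequence converging weakly to a limit map $\varPhi_x : T_xM \setminus \{0\} \to \mathfrak{X}_x$.

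**The scale-invariance $\varPhi_x(y) = \varPhi_x(ry)$** is the payoff of monotonicity and is where the argument becomes genuinely delicate. The idea is that, in the limit, the "error" term $2\int \frac{1}{\rho}|\nabla_r f|^2$ on the right of \eqref{almost-monotonicity1} must vanish because $\Theta_r(x)$ becomes constant in $r$ as $r \downarrow 0$ (the monotone function is asymptotically constant at its infimum). Rescaling this identity shows that the radial derivative $\partial_r \varPhi_x$ vanishes for the limit map, which is exactly the statement that $\varPhi_x$ is $0$-homogeneous, i.e.\ constant along rays. Making this rigorous requires passing the vanishing radial-energy estimate through the weak limit, which uses lower semicontinuity of the $L^2$ norm under weak convergence together with the fact that total energy on annuli converges (no energy is lost in the interior away from $0$, since $\Gamma = \emptyset$ rules out bubbling and the only concentration is at the origin).

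\textbf{The main obstacle} I anticipate is twofold. First, because the fiber $X = \mathrm{Mon}_k^\circ(\mathbb{R}^3)$ is noncompact and only the images of the $f_i$ (not the rescaled maps a priori) are constrained to lie in $K$, I must confirm that the rescaled maps $f_{r_i}$ remain in a fixed compact region of $\mathfrak{X}_x$ so that the target geometry (metric, complex structures) is uniformly controlled and the weak limit lands in $\mathfrak{X}_x$ rather than escaping to infinity; this should follow since $f_{r_i}(B_R) \subset f(B_{Rr_i}(x)) \subset K$, but it needs to be stated carefully. Second, upgrading from the monotonicity-based scale-invariance to a genuinely weak-converging, well-defined limit map requires the bubbling-out to be absent, which is precisely why the hypothesis $\Gamma = \emptyset$ (guaranteed for monopole bundles by Theorem \ref{no-bubbling-locus}) is essential: it ensures the rescaled maps do not lose energy into holomorphic spheres and that the only singular behavior is the homogeneity at the cone point. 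I expect the convergence to be only \emph{weak} (as stated in the theorem), so I would not attempt to prove strong $W^{1,2}$ convergence or smoothness of $\varPhi_x$ here, deferring the identification of $\varPhi_x$ as a tri-holomorphic map to the subsequent subsection.
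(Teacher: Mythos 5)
Your blow-up strategy is the same as the paper's: rescale around $x$, keep the images trapped in $K$, use the bounded rescaled energy to extract a limit, and use the almost monotonicity formula --- whose error constant scales away --- to force $\partial_r \varPhi_x = 0$ and hence homogeneity. However, two steps that you treat as automatic are precisely the ones the paper has to work for, and one of them, as written, is a genuine gap.

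The gap is your claim that no energy is lost in the blow-up limit ``since $\Gamma = \emptyset$ rules out bubbling.'' The hypothesis $\Gamma = \emptyset$ is a statement about the original sequence $f_i \to f$; it says nothing a priori about the new sequence $f_{r_i} \to \varPhi_x$, which is a rescaling of the single singular section $f$ and is exactly the kind of sequence along which energy concentrates. In the abstract setting of Theorem \ref{prop:wal3}, a blow-up sequence could perfectly well bubble even though the original sequence did not. What actually rules this out --- and what the paper uses, implicitly in the line ``Since there is no loss of energy'' and explicitly at the end of its proof --- is a fresh application of the no-bubbling mechanism to $(f_{r_i})$: energy concentration at a smooth point of its bubbling locus would produce a non-constant holomorphic sphere in the fiber $\mathfrak{X}_x$ (Theorem \ref{bubble}), and the monopole fibers contain none (Lemma \ref{no-bubble}). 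This energy identity is not a side remark: with only weak $W^{1,2}$ convergence and lower semicontinuity you get inequalities, and it is exactly the no-loss statement that upgrades them to the equality $\frac{1}{\sigma}\int_{B_\sigma(0)}|d\varPhi_x|^2 = \theta_f(x)$ for every $\sigma$, which your homogeneity argument requires.

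A second, smaller omission of the same nature: in your first step you apply Formula (\ref{almost-monotonicity1}) to $f$ at the point $x$, but Theorem \ref{almost-monotonicity} is proved for smooth Fueter sections, and $f$ is singular precisely at $x$. The paper needs Lemma \ref{non-smooth-almost} to have the formula at all; its proof is again a no-energy-loss limiting argument from the smooth $f_i$, and this is where the hypothesis $\Gamma = \emptyset$ is legitimately used. (The paper also runs $\epsilon$-regularity plus Arzel\'a--Ascoli to get $C^\infty_{\text{loc}}$ convergence away from an $\mathcal{H}^1$-null set, and then shows $S_{\varPhi_x} = \{0\}$ by combining homogeneity with the dimension count and the absence of holomorphic spheres --- a point you defer, but which is needed for $\varPhi_x$ to be defined on all of $T_xM \setminus \{0\}$.)
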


By restricting the resulting tangent map $\varPhi_x$ of Theorem \ref{sing-fueter} to the unit sphere $S^2 \subset T_xM \cong \mathbb{R}^3$, we get a map 
\begin{align*}
    \varphi_x: S^2 \to \mathfrak{X}_x.
\end{align*}
This map is essential in the study of the singular set. 

\begin{theorem}\label{tri-hol-image}
    Suppose we are in the situation of Theorem \ref{prop:wal3} with $\Gamma = \emptyset$. The map  $\varphi_x: S^2 \to \mathfrak{X}_x$ is a $-I$-tri-holomorphic map. Moreover, the image of $\varphi_x$ in $\mathfrak{X}_x$ is  contractible.
\end{theorem}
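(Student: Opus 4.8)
The plan is to analyze the tangent map $\varPhi_x$ from Theorem \ref{sing-fueter} and its restriction $\varphi_x$ to the unit sphere in two stages: first identify the equation it solves, then extract the topological conclusion. The first thing I would record is that the blow-up limit lives in a \emph{fixed} hyperkähler fiber over a \emph{flat} base. Under the rescaling $y \mapsto \exp_x(r_i y)$, the pulled-back metrics converge to the Euclidean metric on $T_xM \cong \mathbb{R}^3$, the Levi-Civita connection on $\mathfrak{X}$ converges to the product connection, and the identification $I \colon STM \to \mathfrak{b}(\mathfrak{X})$ converges to the constant identification $I \colon S^2 = S(T_xM) \to \mathfrak{b}(\mathfrak{X}_x)$. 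Consequently the limit $\varPhi_x \colon \mathbb{R}^3 \setminus \{0\} \to X := \mathfrak{X}_x$ satisfies the flat Fueter equation $I\, d\varPhi_x(e_1) + J\, d\varPhi_x(e_2) + K\, d\varPhi_x(e_3) = 0$ for the fixed hyperkähler triple $(I,J,K)$ on $X$; the convergence and the Fueter property of the limit are supplied by the analysis underlying Theorem \ref{sing-fueter} together with the monotonicity identity \eqref{almost-monotonicity2}, which also forces $\varPhi_x$ to be $0$-homogeneous.

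For the tri-holomorphic equation I would exploit homogeneity. Since $\varPhi_x(ry) = \varPhi_x(y)$ for $r > 0$, the radial derivative vanishes, $d\varPhi_x(e_r) = 0$, where $e_r$ is the outward unit radial field. Evaluating the frame-independent Fueter operator in an oriented orthonormal frame $(e_r, \tau_1, \tau_2)$ adapted to the spheres $\{|y| = \rho\}$, the radial term drops out, leaving $I(\tau_1)\, d\varPhi_x(\tau_1) + I(\tau_2)\, d\varPhi_x(\tau_2) = 0$. Rescaling the tangential derivatives by $\rho$, on the unit sphere this reads $I(\tau_1)\, \partial_{\tau_1}\varphi_x + I(\tau_2)\, \partial_{\tau_2}\varphi_x = 0$. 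Applying $-I(\tau_1)$ and using the quaternionic relation $I(\tau_1) I(\tau_2) = I(e_r) = I(u)$ for the oriented basis at $u = e_r \in S^2$, I obtain $\partial_{\tau_1}\varphi_x - I(u)\, \partial_{\tau_2}\varphi_x = 0$, which, matching the induced boundary orientation on $S^2 = S(T_xM)$, is precisely the $-I$-tri-holomorphic equation. This settles the first assertion.

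For the second assertion I would read ``contractible'' as the statement that $\varphi_x$ is null-homotopic in $\mathfrak{X}_x$ (equivalently, that the inclusion of its image is null-homotopic); this is the form needed downstream, since the Atiyah--Hitchin fiber deformation retracts onto the non-contractible surface $\Sigma_2$, so null-homotopy is exactly what excludes the image being $\Sigma_2$. The natural route is the twistor correspondence: a $-I$-tri-holomorphic map is the same datum as a holomorphic section $s \colon \mathbb{CP}^1 \to Z$ of the twistor fibration $Z = X \times S^2 \to \mathbb{CP}^1$ carrying its integrable twistor complex structure. The constant sections (twistor lines) project to points of $X$ and are null-homotopic, so it suffices to show that $s$ lies in the same path-component of the space of smooth sections as a twistor line; projecting such a path to $X$ then exhibits a null-homotopy of $\varphi_x$. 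I would supplement this with the observation that, being a $0$-homogeneous tangent map, $\varphi_x$ is in addition a harmonic map $S^2 \to X$ (the $0$-homogeneous harmonic map equation on $\mathbb{R}^3$ reduces to the harmonic map equation on $S^2$), pinning $s$ to the minimal family.

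The first assertion is routine once the flat blow-up limit is in hand. The genuine obstacle is the contractibility: the class $[\varphi_x] \in \pi_2(\mathfrak{X}_x)$ cannot be detected by pairing against the Kähler classes $[\omega_I], [\omega_J], [\omega_K]$, since those pairings vanish by Stokes as soon as the forms are exact, and for the Atiyah--Hitchin fiber $H^2$ vanishes while $H_2$ is pure torsion, so de Rham cohomology sees nothing. This is precisely why a purely (co)homological argument is insufficient and why the twistor-line homotopy, or an equivalent structural input forcing $[\varphi_x] = 0$ in $\pi_2$, is indispensable; controlling this homotopy class is the step I expect to demand the most care.
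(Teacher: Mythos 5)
Your proof of the first assertion is correct and is essentially the paper's own argument: homogeneity of the tangent map kills the radial derivative, the Fueter equation in an adapted frame $(\partial r, \tau_1, \tau_2)$ reduces to $I(\tau_1)\,d\varphi_x(\tau_1) + I(\tau_2)\,d\varphi_x(\tau_2) = 0$, and multiplying by $-I(\tau_1)$ and using the quaternionic relations yields the $-I$-tri-holomorphic equation.

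The contractibility assertion, however, is where your proposal has a genuine gap, and the route you sketch cannot be completed. First, your reduction is circular: for the product fibration $Z = X \times S^2 \to S^2$, the path components of the space of sections are precisely the homotopy classes of maps $S^2 \to X$, so ``the section $s$ lies in the same path component as a twistor line'' is a verbatim restatement of ``$\varphi_x$ is null-homotopic'' --- it is the goal, not a mechanism for reaching it. Second, and decisively, \emph{no} argument that uses only the tri-holomorphic property of $\varphi_x$ can prove contractibility: the remark following the proof of Theorem \ref{Main-theorem} exhibits the homogeneous Fueter map $\varPhi : \mathbb{R}^3 \setminus \{0\} \to \text{Mon}_2^{\circ}(\mathbb{R}^3)$ sending $y$ to the axisymmetric monopole with axis $\mathbb{R}y$; its restriction to $S^2$ is a $-I$-tri-holomorphic map whose image is $\Sigma_2$, the generator of $\pi_2(\text{Mon}_2^{\circ}(\mathbb{R}^3))$, hence non-contractible. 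So tri-holomorphicity is fully compatible with a non-trivial $\pi_2$ class, and any proof of contractibility must use more than the equation. (A side error: by Theorem \ref{J2holomorphic}, $-I$-tri-holomorphic maps correspond to sections holomorphic for the \emph{non-integrable} Eells--Salamon structure $\mathcal{J}_2$, not for the integrable twistor structure whose holomorphic sections are the twistor lines.)

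The missing input is that $\varphi_x$ is not an arbitrary tri-holomorphic map but a blow-up limit of the globally smooth sections $f_i$. The paper's proof (following \cite{MR3941492}) interleaves the two limits $f_i \to f$ and $f_{r_j} \to \varPhi_x$ into a diagonal sequence $\tilde{f}_{i,i} = f_i \circ \exp_x \circ \lambda_{r_i} \to \varPhi_x$ in $C^{\infty}_{\text{loc}}$; for each $i$ the restriction of $\tilde{f}_{i,i}$ to a small sphere extends smoothly over the $3$-ball it bounds (because $f_i$ is smooth at $x$), hence is null-homotopic, and this property survives the limit. Contractibility is thus a soft consequence of the smoothness of the approximating sections --- exactly the structural fact that distinguishes $\varphi_x$ from the axisymmetric counterexample above.
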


Note that although the image of the map $\varphi_x$ of Theorem \ref{tri-hol-image} is contractible, it is not trivial. 

\begin{theorem}\label{non-constant-tangent}
    Suppose we are in the situation of Theorem \ref{prop:wal3} with $\Gamma = \emptyset$. Let $x \in \text{sing}(f)$. Any tangent map $\varPhi_x$ is non-trivial. In particular, the image of $\varphi_x$ in $\mathfrak{X}_x$ is non-trivial.
\end{theorem}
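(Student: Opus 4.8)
The plan is to prove non-triviality by identifying the energy of the tangent map $\varphi_x$ on $S^2$ with the energy density of $f$ at $x$, which is positive exactly because $x$ is a non-removable singularity. First I would set
\[
\Theta(x) := \limsup_{r \downarrow 0}\frac{1}{r}\int_{B_r(x)}|\nabla f|^2
\]
and observe that the almost-monotonicity formula \eqref{almost-monotonicity1} for the Fueter section $f$ makes $r \mapsto \frac{e^{cr}}{r}\int_{B_r(x)}|\nabla f|^2 + c r^2$ non-decreasing, so the $\limsup$ is in fact a limit. Since $x \in \text{sing}(f)$, the definition of the non-removable singular set forces $\Theta(x) > 0$. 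On the other side, Theorem \ref{sing-fueter} gives that $\varPhi_x$ is $0$-homogeneous and Theorem \ref{tri-hol-image} that $\varphi_x$ is $-I$-tri-holomorphic; a computation in polar coordinates on $T_xM \cong \mathbb{R}^3$ then shows
\[
\frac{1}{R}\int_{B_R}|\nabla \varPhi_x|^2 = \int_{S^2}|\nabla_{S^2}\varphi_x|^2 =: E(\varphi_x),
\]
independently of $R > 0$. Thus $\varPhi_x$ is non-trivial precisely when $E(\varphi_x) > 0$, and the whole statement reduces to the identity $E(\varphi_x) = \Theta(x)$.

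Next I would compute how energy transforms under the blow-up $f_{r_i}(y) = f(\exp_x(r_i y))$. Changing variables by $z = \exp_x(r_i y)$ and using that $d\exp_x$ and the volume density differ from their Euclidean values by $O(r_i^2)$, one gets
\[
\frac{1}{R}\int_{B_R}|\nabla f_{r_i}|^2 = \frac{1}{R r_i}\int_{B_{R r_i}(x)}|\nabla f|^2\,\bigl(1 + O(r_i^2)\bigr) \xrightarrow[i\to\infty]{} \Theta(x)
\]
for every fixed $R > 0$, since $R r_i \downarrow 0$ and the monotone limit of $\frac{1}{s}\int_{B_s(x)}|\nabla f|^2$ is $\Theta(x)$. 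Weak lower semicontinuity of the Dirichlet energy under the weak convergence $f_{r_i} \rightharpoonup \varPhi_x$ of Theorem \ref{sing-fueter} then yields $E(\varphi_x) \le \Theta(x)$; the entire difficulty is to show that no energy escapes into a defect measure, which would give the reverse inequality.

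The hard part, and the heart of the argument, is this no-energy-loss statement. The rescaled maps $f_{r_i}$ form a sequence of Fueter maps into the fixed compact target $K \subset \text{Mon}_k^{\circ}(\mathbb{R}^3)$ whose defining equation converges to the flat Fueter-map equation \eqref{almost-monotonicity2} and whose energies are locally bounded by the computation above, so the compactness and structure theory behind Theorem \ref{prop:wal3} applies on each ball $B_R$: after passing to a subsequence, $|\nabla f_{r_i}|^2\,\mathcal{H}^3 \rightharpoonup |\nabla \varPhi_x|^2\,\mathcal{H}^3 + \Theta'\,\mathcal{H}^1|_{\Gamma'}$ for a rectifiable bubbling locus $\Gamma'$ of the blow-up. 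Arguing exactly as in the proof of Theorem \ref{no-bubbling-locus}, if $\Theta'\,\mathcal{H}^1|_{\Gamma'} \neq 0$ then $\Gamma'$ has an $\mathcal{H}^1$-positive, hence smooth, point, at which a non-constant holomorphic sphere in $\text{Mon}_k^{\circ}(\mathbb{R}^3)$ bubbles off by Theorem \ref{bubble} — impossible by Lemma \ref{no-bubble}. Hence $\Gamma' = \emptyset$, the convergence $f_{r_i} \to \varPhi_x$ is strong in $W^{1,2}_{\mathrm{loc}}$, and the energy identity of the second paragraph upgrades to $E(\varphi_x) = \Theta(x) > 0$. This makes $\varphi_x$ non-constant and $\varPhi_x$ non-trivial, and in particular the image of $\varphi_x$ is more than a point, proving both assertions. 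I expect the technical crux to be checking that the bubbling analysis of Theorem \ref{prop:wal3} transfers to the rescaled sequence — in particular that the exact monotonicity \eqref{almost-monotonicity2} yields uniform local energy bounds and that the defect is carried by a genuinely $1$-rectifiable set — so that the no-holomorphic-sphere input of Lemma \ref{no-bubble} can be applied.
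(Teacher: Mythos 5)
Your proposal is correct and takes essentially the same route as the paper: the paper's proof is just the chain of equivalences $x \notin \text{sing}(f) \iff \theta_f(x)=0 \iff \theta_{\varPhi_x}(0)=0 \iff d\varPhi_x = 0$, which rests on the energy identity $\frac{1}{\sigma}\int_{B_\sigma(0)}|d\varPhi_x|^2 = \theta_f(x)$ already established inside the proof of Theorem \ref{sing-fueter} via precisely the no-energy-loss argument (bubbling ruled out by Lemma \ref{no-bubble} and Theorem \ref{bubble}) that forms the heart of your write-up. The only difference is organizational: the paper folds the energy identity into Theorem \ref{sing-fueter}, whereas you re-derive it within this proof.
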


We start with some necessary lemmas for the proofs of Theorems \ref{sing-fueter}, \ref{tri-hol-image}, and \ref{non-constant-tangent}.

Let the density function $\theta_{\varPhi}$ of a Fueter section $\varPhi$ be the function defined by
\begin{align}\label{density}
    \theta_{\varPhi}(x) = \lim_{\rho \downarrow 0} \frac{1}{\rho} \int_{B_{\rho}(x)} |\nabla \varPhi|^2,
\end{align}
where $x$ can be a smooth or a singular point of $\varPhi$.

\begin{lemma}
     Let $(X, g_X, I, J, K)$ be a hyperk\"ahler manifold. Let $S \subset \mathbb{R}^3$ be a closed subset. Let $\varPhi : \mathbb{R}^3 \setminus S \to X$ be a Fueter map with $S = \text{sing}(\varPhi)$ and $\mathcal{H}^1(S) = 0$. Moreover, suppose the almost monotonicity formula \ref{almost-monotonicity1} holds for all $x \in \mathbb{R}^3$ with $c=0$. Then, the limit \ref{density} exists at all points, and therefore, the density function $\theta_{\varPhi}$ is defined for all $x \in \mathbb{R}^3$.
\end{lemma}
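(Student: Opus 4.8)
The plan is to deduce the existence of the limit \ref{density} from a single structural consequence of the hypothesis: with $c=0$, the almost monotonicity formula upgrades to genuine monotonicity, and a bounded monotone function automatically has a one-sided limit. Concretely, fix $x \in \mathbb{R}^3$ and set
\begin{align*}
g(r) := \frac{1}{r}\int_{B_r(x)}|\nabla \varPhi|^2, \qquad r > 0.
\end{align*}
The goal is to show that $g(r)$ converges as $r \downarrow 0$; its limit is then by definition $\theta_{\varPhi}(x)$.

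Before using monotonicity, I would first check that $g(r)$ is well-posed even when $x \in S$. Since $\mathcal{H}^1(S) = 0$, the elementary comparison of Hausdorff measures gives $\mathcal{H}^3(S) = 0$, so $S$ is Lebesgue-negligible in $\mathbb{R}^3$ and one has $\int_{B_r(x)}|\nabla \varPhi|^2 = \int_{B_r(x)\setminus S}|\nabla \varPhi|^2$ with no ambiguity. The finiteness of these integrals for every $r$ is precisely what is needed to make both sides of the assumed monotonicity formula meaningful, and it is built into the hypothesis that \ref{almost-monotonicity1} holds at all $x \in \mathbb{R}^3$. The key step is then to specialize \ref{almost-monotonicity1} to $c=0$: the exponential weights become $1$ and the error term $-c(r^2-s^2)$ vanishes, so for every $0 < s < r$,
\begin{align*}
g(r) - g(s) \;\geq\; 2\int_{B_r(x)\setminus B_s(x)}\frac{1}{\rho}\,|\nabla_r \varPhi|^2 \;\geq\; 0.
\end{align*}
Hence $g$ is monotone non-decreasing on $(0,\infty)$.

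Finally, since $g(r) \geq 0$ for all $r > 0$ and $g$ is monotone non-decreasing, the limit $\lim_{r\downarrow 0} g(r)$ exists and equals $\inf_{r>0} g(r) \in [0,\infty)$. This is exactly the density $\theta_{\varPhi}(x)$ of \ref{density}, and since $x$ was arbitrary, the density function is defined at every point of $\mathbb{R}^3$. I do not expect a substantive obstacle in this argument: the monotonicity formula, assumed to hold at all points with $c=0$, does all the work, and the remaining content is the elementary convergence of a bounded monotone function. The only point genuinely requiring care is the legitimacy of evaluating the monotone quantity at singular points $x \in S$, which is handled by the measure-theoretic observation $\mathcal{H}^1(S)=0 \Rightarrow \mathcal{H}^3(S)=0$ together with the assumed validity of \ref{almost-monotonicity1} for every $x$.
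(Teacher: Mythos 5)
Your proposal is correct and uses the same key mechanism as the paper's proof: with $c=0$ the almost monotonicity formula makes $r \mapsto \frac{1}{r}\int_{B_r(x)}|\nabla \varPhi|^2$ non-negative and non-decreasing, so the limit defining $\theta_{\varPhi}(x)$ exists. The only (immaterial) difference is that the paper splits into cases, observing directly that the density is $0$ at smooth points and invoking monotonicity only at points of $S$, whereas you apply the monotonicity argument uniformly at every point and add the well-posedness remark $\mathcal{H}^1(S)=0 \Rightarrow \mathcal{H}^3(S)=0$.
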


\begin{proof}

If $x \in M \setminus S$, then $\varPhi$ is smooth on a neighborhood of $x$, and therefore, Limit \ref{density} is 0 at $x$. 

Suppose $x \in S$. The almost monotonicity formula at $x$ reads as
\begin{align*}
            \frac{1}{r} \int_{B_r(x)}|d \varPhi|^2 - \frac{1}{s} \int_{B_s(x)}|d \varPhi|^2 \geq 2 \int_{B_r(x) \setminus B_s(x)}\frac{1}{\rho}|\partial_r \varPhi|^2,
\end{align*}
for $0 < s < r < r_0(x)$.

Note that the right-hand side of the almost monotonicity at $x$ is non-negative, and therefore, the function defined by
\begin{align*}
    g_{\varPhi}(\rho) := \frac{1}{r} \int_{B_{\rho}(x)}|d \varPhi|^2,
\end{align*}
is a non-negative, non-decreasing function in $\rho$; hence, Limit \ref{density} exists. 
\end{proof}

In the following lemma, we show that the almost monotonicity formula holds for the limiting Fueter section $f$.

\begin{lemma}\label{non-smooth-almost}
Suppose we are in the situation of Theorem \ref{prop:wal3} with $\Gamma = \emptyset$. Let $f$ be the limiting Fueter section. Then, the almost monotonicity formula \ref{almost-monotonicity1} holds for $f$ at any point $x \in M$. 
\end{lemma}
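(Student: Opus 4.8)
The goal is Lemma~\ref{non-smooth-almost}: the almost monotonicity formula \ref{almost-monotonicity1} holds for the limiting Fueter section $f$ at every point $x \in M$, including the points of $\text{sing}(f)$. The plan is to obtain the formula for $f$ by passing to the limit in the inequality we already know for each $f_i$, using the weak convergence of measures supplied by Theorem~\ref{prop:wal3}.

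First I would recall precisely what is available. For each smooth Fueter section $f_i$, Theorem~\ref{almost-monotonicity} gives inequality \ref{almost-monotonicity1} with a uniform constant $c$ (it depends only on the geometry of $(M,g)$, hence is independent of $i$). By Theorem~\ref{prop:wal3}, after passing to a subsequence the measures $\mu_i = |\nabla f_i|^2 \mathcal{H}^3$ converge weakly to $\mu = |\nabla f|^2 \mathcal{H}^3 + \Theta \mathcal{H}^1|_S$; but since we are assuming $\Gamma = \emptyset$, the defect measure $\Theta\mathcal{H}^1|_S$ vanishes, so in fact $\mu_i \rightharpoonup |\nabla f|^2 \mathcal{H}^3$. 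This is the key simplification: there is no concentrated energy to account for, so the limit measure is exactly the energy density of $f$.

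Next I would fix $x \in M$ and $0 < s < r \le r_0(x)$, and pass to the limit term by term in
\begin{align*}
    \frac{e^{cr}}{r} \int_{B_r(x)}|\nabla f_i|^2 - \frac{e^{cs}}{s} \int_{B_s(x)}|\nabla f_i|^2 \geq 2\int_{B_r(x) \setminus B_s(x)}\frac{1}{\rho}|\nabla_r f_i|^2 - c(r^2 - s^2).
\end{align*}
For the left-hand side, weak convergence of $\mu_i$ gives $\mu_i(B_\rho(x)) \to \int_{B_\rho(x)}|\nabla f|^2$ for all but countably many radii $\rho$ (those for which $\mu(\partial B_\rho(x)) = 0$); I would choose $s, r$ among these good radii, then remove the restriction afterward by a standard monotonicity/continuity argument using that both sides are monotone in $s$ and $r$. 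For the right-hand side the weight $1/\rho$ is bounded away from $0$ and $\infty$ on the annulus $B_r(x)\setminus B_s(x)$ (assuming $s > 0$), and the radial derivative $\nabla_r f_i$ is controlled by $|\nabla f_i|$; the cleanest route is lower semicontinuity: since $f_i \to f$ in $C^\infty_{\mathrm{loc}}(M \setminus S)$ and $\mathcal{H}^1(S) = 0$, I would argue that $\int_{B_r\setminus B_s}\frac{1}{\rho}|\nabla_r f|^2 \le \liminf_i \int_{B_r\setminus B_s}\frac{1}{\rho}|\nabla_r f_i|^2$, which is what is needed to keep the inequality in the correct direction. Combining the limit of the left-hand side with the $\liminf$ lower bound on the right yields \ref{almost-monotonicity1} for $f$.

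The main obstacle is the right-hand side: away from $S$ convergence is smooth, but near the singular set $S$ one must ensure that the annular integral of $\frac{1}{\rho}|\nabla_r f|^2$ is not larger than the limit of the corresponding integrals for $f_i$, and that no energy escapes into $S$. Here the hypothesis $\Gamma = \emptyset$, together with $\mathcal{H}^1(\text{sing}(f)) = 0$ and the absence of a defect measure, is exactly what prevents hidden energy concentration on $S$; the set $S$ has zero $\mathcal{H}^1$-measure and the measures converge without defect, so $S$ contributes nothing to either side. I would therefore handle the right-hand side by Fatou/lower-semicontinuity on the open annulus minus $S$ (where convergence is $C^\infty_{\mathrm{loc}}$) and note that the excluded set $S$ is $\mu$-null and $|\nabla f|^2$-null. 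The left-hand side is the more routine part once the good-radii selection is in place. This gives the almost monotonicity formula for $f$ at all $x \in M$, completing the lemma.
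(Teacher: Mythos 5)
Your proposal is correct and takes essentially the same route as the paper: pass to the limit in the almost monotonicity inequality for the smooth sections $f_i$, using that $\Gamma = \emptyset$ removes the defect measure so the energies converge to those of $f$ with no loss. Your write-up is in fact more careful than the paper's brief argument (selection of good radii for the weak convergence of $\mu_i$, lower semicontinuity to preserve the direction of the inequality on the right-hand side), but the underlying idea is identical.
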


\begin{proof}
If $x \in M \setminus S$, then $f$ is smooth on a neighborhood of $x$, and therefore, Walpuski's argument for the almost monotonicity formula holds. 

Suppose $x \in S = \text{sing}(f)$. Following Theorem \ref{almost-monotonicity}, the almost  monotonicity formula holds for the smooth Fueter sections $f_i$ at $x$. Since $\Gamma = \emptyset$, there is no energy loss, and therefore, on any neighborhood of $x$, we have $\|\nabla f_i \|_{L^2} \to \|\nabla f \|_{L^2}$ as $i \to \infty$. Therefore, by taking limit $i \to \infty$ of the almost  monotonicity formula for $f_i$ at $x$, we get the formula for $f$.
\end{proof}

\begin{proof} [Proof of Theorem \ref{sing-fueter}] Let $N$ be sufficiently large such that $r_i < r_0(x)$ for all $i \geq N$. Let $S_i$ be the singular set of $f_{r_i}$,
\begin{align*}
    S_i = \lambda_{r_i}^{-1} \circ \exp_x^{-1}(\text{sing}(f) \cap B_{r_0(x)}(x)).
\end{align*}
Let $S = \cup_{i=N}^{\infty} S_i$. We have $\mathcal{H}^1(S) = 0$. Each $f_{r_i}$ is a smooth Fueter section on the open set $B_1(0) \setminus S \subset \mathbb{R}^3$. Furthermore,
\begin{align*}
    \frac{1}{2}\|\nabla f_{r_i}\|_{L^2(B_1(0))} \leq 
    \frac{1}{2}\|\nabla f\|_{L^2(B_{r_0}(x))} \leq c_e,
\end{align*} 
and therefore, the energy of sections $f_{r_i}$ is bounded on this region. Moreover, $\text{Image}(f_{r_i}) \subset \text{Image}(f) \subset K$. 

Although Fueter sections $(f_{r_i})_{i=1}^{\infty}$ are not necessarily smooth, Walpuski's compactness argument holds for them: for any $y \in B_1(0) \setminus S$, there exists $0<r<1$ such that for all $i$ sufficiently large
\begin{align*}
    \frac{1}{r} \int_{B_r(y)} | \nabla f_{r_i} |^2 \leq \epsilon_0.
\end{align*}
By the $\epsilon$-regularity \cite[Proposition 3.1.]{MR3718486}, there is a uniform bound on $|\nabla f_{r_i}|$ on a smaller open set $B_{r/4}(y)$. It follows from Arzel\'a–Ascoli that there is a convergent subseqeunce of $f_{r_{i_j}} \to \varPhi_x$ on $B_1(0) \setminus S$ in $C^{\infty}_{\text{loc}}$. 

For any sufficiently small $\sigma > 0$, 
\begin{align*}
\frac{1}{\sigma} \int_{B_{\sigma}(0)}|\nabla f_{r_i}|^2  = 
\frac{1}{\sigma r_i} \int_{B_{\sigma r_i}(x)}|\nabla f|^2.
\end{align*}
Let $i \to \infty$. Since there is no loss of energy, we have
\begin{align*}
\frac{1}{\sigma} \int_{B_{\sigma}(0)}|\nabla f_{r_i}|^2 \to  
\frac{1}{\sigma} \int_{B_{\sigma}(0)}|d \varPhi_x|^2.
\end{align*}
Therefore, as $i \to \infty$, the following limit exists,
\begin{align*}
\frac{1}{\sigma r_i} \int_{B_{\sigma r_i}(x)}|\nabla f|^2 \to \theta_f(x),
\end{align*}
and
\begin{align*}
\frac{1}{\sigma} \int_{B_{\sigma}(0)}|d\varPhi_x|^2  = 
\theta_f(x).
\end{align*}
Hence, the function 
\begin{align*}
    g_{\varPhi_x}(\sigma) := \frac{1}{\sigma} \int_{B_{\sigma}(0)}|d\varPhi_x|^2,
\end{align*}
is constant and independent of $\sigma$. In fact, 
\begin{align*}
    \frac{1}{\sigma} \int_{B_{\sigma}(0)}|d\varPhi_x|^2 = \theta_{\varPhi_x}(0) = \theta_f(x),
\end{align*}
for all $\sigma > 0$.

Lemma \ref{non-smooth-almost} shows that the almost monotonicity formula holds for $f$ with some constant $c$. By scaling, we see that the almost monotonicity formula holds for $f_{r_i}$. By Restricting to radii $0 < s < r < 1$, the constant $c \to 0$ as $i \to \infty$. Since there is no loss of energy, by taking the limit of the almost monotonicity for $f_{r_i}$, we get the almost monotonicity formula for $\varPhi_x$, with constant $c=0$. 
\begin{align*}
    \frac{1}{r} \int_{B_r(0)}|d \varPhi_x|^2 - \frac{1}{s} \int_{B_s(0)}|d \varPhi_x|^2 = 0 \geq 2 \int_{B_r(0) \setminus B_s(0)}\frac{1}{\rho}|\partial_r \varPhi_x|^2,
\end{align*}
hence, 
\begin{align*}
    \partial_r \varPhi_x = 0.
\end{align*}
Therefore, the tangent map of $f$ at $x$ is homogenous,
\begin{align*}
    \varPhi_x(y) = \varPhi_x(ry), \quad \quad \forall r > 0,
\end{align*}
and therefore, it can be extended to the whole ray $\mathbb{R}^+ \cdot y$ if it is defined at a point $y$.

As before, since $\text{Mon}_2^{\circ}(\mathbb{R}^3)$ does not contain any holomorphic sphere, the singular locus $S_{\varPhi_x}$ is only consists of non-removable singularities; hence, $\dim(S_{\varPhi_x}) < 1$. However, if there is any point $y \in S_{\varPhi_x} \setminus \{0\}$, since $\varPhi_x$ is a homogenous map, the whole ray $\{ ry \; | \; r \in \mathbb{R}^+ \} \subset S_{\varPhi_x}$. This implies $\dim(S_{\varPhi_x}) \geq 1$, which is a contradiction. Therefore,
\begin{align*}
    S_{\varPhi_x} = \{0\} \subset \mathbb{R}^3.
\end{align*}
\end{proof}

In particular, the map $\varphi_x: S^2 \to \mathfrak{X}_x$ is smooth everywhere on $S^2$.

\begin{proof}[Proof of Theorem \ref{tri-hol-image}]
    Let $\varphi_{x}: S^2 \to \mathfrak{X}_{x}$ be the restriction of a tangent map of a Fueter section at a point $x$. Maps $f_{r_i}$ satisfy the Fueter equation at all the non-singular points, and therefore, $\varPhi_x$ is a Fueter map on $\mathbb{R}^3 \setminus \{0\}$. 

    Let $y \in S^2$. Choose the local frame $(\partial r, e_1, e_2)$ such that $\partial r$ is the unit radial vector, and  $(e_1, e_2)$ is an orthonormal frame on a neighborhood of $y$ in $S^2$. Then, we have
\begin{align*}
    I(\partial r) d \varPhi_x (\partial r) &+ 
    I(e_1) d \varPhi_x (e_1) + 
    I(e_2) d \varPhi_x (e_2) \\&=  
    I(e_1) d \varPhi_x (e_1) + 
    I(e_2) d \varPhi_x (e_2) = 0,
\end{align*}
since $d \varPhi_x (\partial r) = 0$.

By multiplying the equation by $-I(e_1)$, we get 
\begin{align*}  
    d \varPhi_x (e_1) - 
    I(\partial r) d \varPhi_x (e_2) = 0.
\end{align*}
The contractibility of the image of $\varphi_x$ follows from an argument in \cite[Footnote 32]{MR3941492}. In fact, there are two limits in this problem. After re-enumerating the subsequences, suppose, $f_{i} \to f$ and $f_{r_{j}} \to \varPhi_x$. One can combine these two sequences to construct a double sequence, 
\begin{align*}
    \tilde{f}_{i,j} := (f_{i})_{r_j} = f_i \circ \exp_x \circ \lambda_{r_j}.
\end{align*}
We have $\lim_{i \to \infty} \tilde{f}_{i,i} = \varPhi_x$ in $C^{\infty}_{\text{loc}}$. Therefore, $\varPhi_x$ can be understood as the limit of a sequence of smooth Fueter sections. 

For each $i$, let $S^2_i$ be a sufficiently small $2$-sphere in $\mathbb{R}^3$. The subset $\tilde{f}_{i,j} (S^2_i)$ is contractible since the $S^2_i$ is the boundary of a 3-dimensional ball, denoted by $B^3_i$, where $\tilde{f}_{i,j}$ is defined and smooth. By taking limit, same holds for $\varphi_x(S^2) = \varPhi_x(S^2)$, as explained in \cite{MR3941492}.
\end{proof}

\begin{proof}[Proof of Theorem \ref{non-constant-tangent}] This is similar to the case of energy minimizing maps \cite{MR664498, MR1399562}. We have
    \begin{align*}
        x \in M \setminus \text{sing}(f) \iff 
        \theta_f(x) = 0 \iff 
        \theta_{\varPhi_x}(0) = 0 \iff 
        d \varPhi_x = 0.
    \end{align*}
Therefore, $\varPhi_x$ is non-trivial when $x \in \text{sing}(f)$.
\end{proof}

\subsection{Analysis of the tri-holomorphic maps}\label{analysis-tri-holomorphic}

The previous subsection motivates the study of the tri-holomorphic maps to hyperk\"ahler manifolds. In this section, we show that tri-holomorphic maps are harmonic, and their images are minimal spheres. 

\begin {theorem} \label{def:hhh}
Let $f: S^2 \to X$ be a $I$-tri-holomorphic map. Then, $f$ is a harmonic map. 
\end {theorem}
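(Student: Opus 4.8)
The plan is to show that an $I$-tri-holomorphic map $f: S^2 \to X$ satisfies the harmonic map equation by exhibiting it as a solution to a first-order system whose ``integrability'' forces the second-order harmonic map equation. Recall that a map into a Riemannian manifold is harmonic precisely when its tension field $\tau(f) = \operatorname{tr}_g \nabla df$ vanishes, where $\nabla$ is the connection on $T^*S^2 \otimes f^*TX$ induced by the Levi-Civita connections. The tri-holomorphic condition reads, at each $x \in S^2$, as $\partial_{x_1} f + I(x)\, \partial_{x_2} f = 0$ in a local orthonormal frame $(\partial x_1, \partial x_2)$. The key structural point is that $I(x)$ is \emph{not} a fixed complex structure but varies with $x \in S^2$ according to the isometric identification $I: S^2 \to \mathfrak{b}(X)$; this varying part is exactly what must be handled, since for a \emph{fixed} compatible complex structure the statement that holomorphic implies harmonic is classical.

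First I would set up the computation in a convenient local frame. I would use conformal coordinates on $S^2$ (the domain is two-dimensional, so harmonicity is conformally invariant and it suffices to work on a coordinate chart with the flat metric, using that $S^2$ is locally conformally flat). I would then differentiate the first-order equation $\partial_{x_1} f = -I(x)\, \partial_{x_2} f$: applying $\partial_{x_1}$ to this and $\partial_{x_2}$ to the companion relation $\partial_{x_2} f = I(x)\,\partial_{x_1} f$, and adding, I would compute $\partial_{x_1}^2 f + \partial_{x_2}^2 f$ covariantly. The crucial simplification is that $I$ is $\nabla$-parallel as a section of $\operatorname{End}(TX)$ along $f$ only up to the variation of the \emph{base-point} $x \in S^2$; so the computation produces the flat Laplacian of $f$ plus two types of correction terms: (i) terms involving the covariant derivative of the complex structure $I(x)$ in the directions $\partial x_1, \partial x_2$, and (ii) the second-fundamental-form terms that promote $\partial^2$ to the covariant $\nabla\, df$. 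I would organize the identity so that the tension field $\tau(f)$ is expressed in terms of $\big(\nabla_{\partial x_1} I(x)\big)\partial_{x_2} f$-type contributions against the equation.

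The main obstacle, and the step deserving the most care, is controlling the derivative of the $x$-dependent complex structure $I(x)$. Since $I: S^2 \to \mathfrak{b}(X) \cong S^2$ is the standard isometric identification coming from the hyperk\"ahler structure, its derivative in a direction $v \in T_x S^2$ lands in the span of $\{J, K\}$-type rotations of $I$, and when one contracts $\big(d_x I(v)\big)\,\partial f$ against $\partial f$ using the tri-holomorphic relation $\partial_{x_1} f = -I\,\partial_{x_2} f$, these cross terms should cancel in pairs or combine into a multiple of the first-order operator applied to $f$, which vanishes. Concretely, I expect the identity to take the schematic form $\tau(f) = \big(\nabla_{\partial x_1} I\big)\partial_{x_2} f - \big(\nabla_{\partial x_2} I\big)\partial_{x_1} f + (\text{terms proportional to the tri-holomorphic equation})$, and then a direct check using the explicit rotational form of $dI$ on $S^2$ shows the leading bracket vanishes. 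I would verify this cancellation pointwise in a frame adapted at the point, which is the one genuinely delicate calculation.

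An alternative route, which I would keep in reserve in case the direct cancellation is messy, is energy-variational: one shows that the tri-holomorphic condition makes $f$ a critical point of the Dirichlet energy by writing $|df|^2 = |\partial_{x_1} f|^2 + |\partial_{x_2} f|^2$ and relating the energy density to a topological or exact-form contribution, exactly in the spirit of the pointwise identity $\|\nabla f\|^2 = \|\mathfrak{F}(f)\|^2 - 2\int \Lambda$ used in Theorem \ref{energy-bound} and the bubbling argument of Lemma \ref{no-bubble}. In that formulation the tri-holomorphic maps would be seen as absolute minimizers of energy in their class up to the correction by the exact symplectic forms $\omega_{I(x)} = d\alpha_{I(x)}$, and minimizers are automatically harmonic. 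I expect the first-order-differentiation approach to be cleaner and more self-contained, so I would present that as the primary proof and invoke the variational viewpoint only as a conceptual cross-check.
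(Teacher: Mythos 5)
Your plan follows the same route as the paper's own proof: differentiate the first--order relation $df\circ j = I(x)\circ df$, use the symmetry of the Hessian $\nabla df$, and take the trace to get the tension field. In one respect your instinct is sharper than the paper's write-up: the paper's intermediate step $\nabla d_xf(v_1,jv_2)=I(x)\,\nabla d_xf(v_1,v_2)$ silently discards the correction term $(\nabla_{v_1}I)\,df(v_2)$ produced by the $x$-dependence of $I$, and you correctly single out this term as the crux of the matter.

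However, your proposal stops exactly where the content lies, and the cancellation you defer is neither routine nor unconditional. Carrying your computation through (using that $j$ is parallel on $S^2$, that $\nabla_vI=dI(v)$ is tangent to the $2$-sphere $\mathfrak{b}(X)$ at $I(x)$ and hence anticommutes with $I(x)$, and the symmetry of $\nabla df$) yields, for an orthonormal frame $e_1$, $e_2=je_1$,
\begin{align*}
\tau(f)=\bigl(dI(je_1)+I(x)\,dI(e_1)\bigr)\,df(e_1).
\end{align*}
Since a nonzero tangent vector to $\mathfrak{b}(X)$ is an invertible endomorphism, wherever $df\neq 0$ this vanishes if and only if $dI(je_1)=-I(x)\,dI(e_1)$, i.e.\ if and only if the isometric identification $I:S^2\to\mathfrak{b}(X)$ lies in one definite orientation class --- the one realizing the Eells--Salamon pairing $\mathcal{J}_2$, which is the sign that actually occurs for the maps $\varphi_x$ of Theorem \ref{tri-hol-image}. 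Isometry alone does not grant this, and for the other orientation class the statement is false: over flat $X=\mathbb{R}^4$, the non-real holomorphic sections of the twistor space $\mathcal{O}(1)\oplus\mathcal{O}(1)\to\mathbb{CP}^1$ are non-constant tri-holomorphic maps for the integrable ($\mathcal{J}_1$) pairing, while every harmonic map $S^2\to\mathbb{R}^4$ is constant. So the cross terms cannot ``cancel in pairs'' for both sign conventions, as your plan (which never fixes the convention) tacitly assumes; worse, your schematic bracket $(\nabla_{e_1}I)\,df(e_2)-(\nabla_{e_2}I)\,df(e_1)$ is, after substituting $df(e_2)=I\,df(e_1)$ and anticommuting, identically equal to $-\tau(f)$, so asserting that it vanishes restates the theorem rather than proving it. To close the argument you must make the orientation compatibility between $j$ and $\mathfrak{b}(X)$ an explicit hypothesis --- a point glossed over in the paper as well, whose remark that one may freely replace $I$ by $-I$ cannot be correct for both signs --- and then verify $dI(je_1)=-I(x)\,dI(e_1)$ in that convention. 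The same sign-sensitivity afflicts your variational fallback: whether the $\int_{S^2}f^*\omega_{I(x)}$-type term enters the energy identity with the favorable sign is precisely the same orientation question.
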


\begin{proof}
Let $j$ be the standard almost complex structure on $S^2$. For a point $x \in S^2$ and $v_1$ and $v_2 \in T_x S^2$, we have
\begin{align*}
    \nabla df (v_1, j v_2) = \nabla_{v_1} df (j v_2) - df \nabla_{v_1} (j{v_2}).
\end{align*}
At the point $x \in S^2$, $d_xf$ is $I(x)$-linear, $d_xf (j v_2) = I(x) d_x f(v_2)$. Therefore, 
\begin{align*}
\nabla d_xf (v_1,jv_2) = I(x) \nabla d_xf (v_1,v_2).
\end{align*}
Hence, since $\nabla df$ is a symmetric tensor, we have
\begin{align*}
    \nabla df (j v_1,j v_1) = - \nabla df (v_1,v_2).
\end{align*}
Let $e_1$ and $e_2 = j e_1$ be a local orthonormal frame at $x \in S^2$. Let $\tau(f)$ denote the tension field of $f$ at $x$, 
\begin{align*}
\tau(f) = trace (\nabla df) &= \nabla df (e_1, e_1) + \nabla df (e_2, e_2) \\&= \nabla df (e_1, e_1) + \nabla df (je_1, je_1) = 0,
\end{align*}
and therefore, $f$ is harmonic.
\end{proof}

\begin{corollary}\label{tri-hol-min}
The image of an $I$-tri-holomorphic map $f:S^2 \to X$ is a minimal sphere.
\end{corollary}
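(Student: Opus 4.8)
The plan is to upgrade the harmonicity established in Theorem \ref{def:hhh} to minimality of the image by verifying that $f$ is \emph{weakly conformal}, and then invoking the classical fact that a weakly conformal harmonic map from a Riemann surface is a branched minimal immersion.

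First I would check weak conformality as a pointwise consequence of the tri-holomorphic equation. Fix $x \in S^2$ and an orthonormal frame $(e_1, e_2 = j e_1)$ at $x$. The tri-holomorphic condition reads $d_xf(e_2) = d_xf(j e_1) = I(x)\, d_xf(e_1)$. Since each $I(x) \in \mathfrak{b}(X)$ is a complex structure compatible with the hyperk\"ahler metric $g_X$, it acts as an isometry and is skew-adjoint with respect to $g_X$. The isometry property gives
\begin{align*}
    |d_xf(e_2)|^2 = |I(x)\, d_xf(e_1)|^2 = |d_xf(e_1)|^2,
\end{align*}
while skew-adjointness gives
\begin{align*}
    \langle d_xf(e_1), d_xf(e_2) \rangle = \langle d_xf(e_1), I(x)\, d_xf(e_1) \rangle = 0.
\end{align*}
These two identities say precisely that $d_xf$ sends the conformal structure of $S^2$ to an orthogonal pair of vectors of equal length, i.e. $f$ is weakly conformal at every point.

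Next I would combine the two properties. For a map out of a Riemann surface, being simultaneously harmonic and weakly conformal is exactly the condition for being a branched minimal immersion, so that its image is a minimal surface. Since the domain is $S^2$ and, by Theorem \ref{non-constant-tangent}, the relevant maps $\varphi_x$ are non-trivial, the image is a minimal sphere. The main obstacle is the possible presence of branch points, i.e. points where $d_xf = 0$; there the equal-stretching and orthogonality identities hold trivially, so weak conformality is genuinely global, and one must appeal to the branched-immersion theory for conformal harmonic maps (and the removability of branch points of minimal surfaces) to conclude that such points do not obstruct minimality of the image. This completes the proof.
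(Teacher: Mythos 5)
Your proposal is correct and follows essentially the same route as the paper: the paper's proof likewise observes that pointwise $I(x)$-holomorphicity makes $f$ conformal and then invokes the fact that the image of a conformal harmonic map is a minimal surface. Your version simply fills in the details the paper leaves implicit, namely the computation that compatibility of $I(x)$ with $g_X$ (isometry plus skew-adjointness) yields weak conformality, and the remark that branch points must be handled via the branched minimal immersion theory.
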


\begin{proof}
At any point $x \in S^2$, the map $f: S^2 \to X$ is $I(x)$-holomorphic, and therefore, it is an angle-preserving conformal map. The image of a harmonic conformal map $f: S^2 \to X$ is a minimal surface in $X$.
\end{proof}

We finish this subsection with a simple observation that these conformal tri-holomorphic maps are not holomorphic with respect to any fixed holomorphic structure on $X$.

\begin{theorem}\label{noholomoprhic}
Let $f: S^2 \to X$ be an $I$-tri-holomorphic map. Furthermore, suppose $f: S^2 \to X$ is holomorphic with respect to a complex structure $I_0$ on $X$. Then, $f$ is a constant map.
\end{theorem}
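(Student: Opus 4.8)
The plan is to show that an $I$-tri-holomorphic map which is simultaneously $I_0$-holomorphic for some \emph{fixed} complex structure $I_0$ must be constant, by exploiting the tension between the rotating complex structure $I(x)$ and the fixed one $I_0$. The key geometric input is the identification $I: S^2 \to \mathfrak{b}(X)$, under which distinct points $x \in S^2$ generically correspond to distinct complex structures $I(x)$ on $X$. Since $\mathfrak{b}(X) \cong S^2$ is the sphere of unit imaginary quaternions $\{aI+bJ+cK\}$, the fixed structure $I_0$ equals $I(x_0)$ for at most one pair of antipodal points $\pm x_0 \in S^2$ (where $I(x)=\pm I_0$), and at every other point $x$ the complex structure $I(x)$ is \emph{linearly independent} from $I_0$ inside the quaternionic span.

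First I would set up the two differential constraints at a generic point $x \in S^2$. Choosing a local oriented orthonormal frame $(e_1, e_2 = j e_1)$ on $S^2$, the $I$-tri-holomorphicity reads $d_xf(e_2) = I(x)\, d_xf(e_1)$, i.e. $d_xf$ intertwines the complex structure $j$ on $T_xS^2$ with $I(x)$ on $X$. Simultaneously, $I_0$-holomorphicity of $f$ with respect to $j$ on the domain gives $d_xf(e_2) = I_0\, d_xf(e_1)$. Subtracting these two relations yields
\begin{align*}
\bigl(I(x) - I_0\bigr)\, d_xf(e_1) = 0
\end{align*}
at every $x \in S^2$. The main step is then to argue that at any $x$ where $I(x) \neq I_0$, the endomorphism $I(x) - I_0$ is invertible on $T_{f(x)}X$, forcing $d_xf(e_1) = 0$, and hence (via the $I(x)$-linearity) also $d_xf(e_2)=0$, so that $d_xf = 0$.

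The invertibility claim is the technical heart and I expect it to be the main obstacle to state cleanly. The point is that $I(x)$ and $I_0$ are two distinct unit imaginary quaternions acting on the tangent space through the hyperk\"ahler quaternionic structure; if $I(x) \neq \pm I_0$ then $I(x)-I_0$ is a nonzero quaternion with nonzero real part absent but is invertible precisely because its ``norm'' $\tfrac12\|I(x)-I_0\|^2 = 1 - \langle I(x), I_0\rangle \neq 0$ acts as a positive scalar: concretely $(I(x)-I_0)^2 = I(x)^2 - I(x)I_0 - I_0 I(x) + I_0^2 = -2\,\mathrm{Id} - (I(x)I_0 + I_0 I(x))$, and the anticommutator $I(x)I_0 + I_0 I(x) = -2\langle I(x),I_0\rangle\,\mathrm{Id}$ is a scalar multiple of the identity in the quaternion algebra, so $(I(x)-I_0)^2 = -2(1 - \langle I(x),I_0\rangle)\,\mathrm{Id}$ is a negative scalar times the identity whenever $I(x)\neq I_0$. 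Thus $I(x)-I_0$ is invertible there.

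Finally I would conclude by a density argument. The set of $x \in S^2$ with $I(x) = I_0$ is at most a single point (since $I$ is an isometric identification with the two-sphere of complex structures and $I_0$ is a single point of that sphere), so $d_xf = 0$ holds on the complement of a single point, which is dense in $S^2$. By continuity of $df$, we conclude $df \equiv 0$ on all of $S^2$, and since $S^2$ is connected, $f$ is constant. The only subtlety to handle carefully is the case $I(x) = -I_0$, where $I(x)-I_0 = -2I_0$ is still invertible, so the vanishing of $df$ in fact holds away from the unique point where $I(x)=I_0$; this is more than enough density to force $f$ constant.
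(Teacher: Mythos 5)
Your proposal is correct and follows essentially the same route as the paper: subtract the two holomorphicity relations to obtain $(I(x)-I_0)\circ d_xf = 0$, then conclude that $df$ vanishes away from the single point where $I(x)=I_0$ and hence everywhere by continuity. The paper leaves the invertibility of $I(x)-I_0$ and the density step implicit (it simply asserts $df=0$ almost everywhere), whereas you justify it via the quaternionic anticommutator identity $(I(x)-I_0)^2 = -2\bigl(1-\langle I(x),I_0\rangle\bigr)\,\mathrm{Id}$; this is a useful elaboration of the same argument, not a different one.
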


\begin{proof}
At each point $x \in S^2$, we have 
\begin{align*}
    d_xf + I(x) \circ d_xf \circ j = 0, \quad \text{ and } \quad 
    d_xf + I_0 \circ d_xf \circ j = 0,
\end{align*}
and therefore,
\begin{align*}
    (I(x) - I_0) \circ df \circ j = 0,
\end{align*}
for all $x \in S^2$, and therefore, $df = 0$ almost everywhere on $S^2$; hence, $f$ is a constant map.
\end{proof}

Note that in Theorems \ref{def:hhh} and \ref{noholomoprhic}, and Corollary \ref{tri-hol-min}, one can replace $I$ with $-I$.

\subsection{Tri-holomorphic maps and monopoles}\label{tri-hol-mon}

In this subsection, we rule out the formation of the non-removable singularities for Fueter sections of charge 2 monopole bundles. 

\begin{theorem}[Theorem \ref{Main-theorem}]
Let $(M,g)$ be an oriented Riemannian 3-manifold. Let $\mathfrak{X}_2 \to M$ be the monopole bundle with charge 2. Let $K \subset \mathfrak{X}_2$ be a compact subset. Let $(f_i)_{i=1}^{\infty} \subset \Gamma(\mathfrak{X}_2)$ be a sequence of smooth Fueter sections such that $\text{Image}(f_i) \subset K$ for all $i$. Then, after passing to a subsequence, $(f_i)_{i=1}^{\infty}$ converges to a smooth Fueter section $f \in \Gamma (\mathfrak{X}_2)$ in $C^{\infty}(M)$.
\end{theorem}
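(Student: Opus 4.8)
The plan is to assemble the structural results proved above and reduce the theorem to a topological incompatibility inside the fiber $\text{Mon}_2^\circ(\mathbb{R}^3)$. First, the energy bound of Theorem \ref{energy-bound} gives $\mathcal{E}(f_i) = \frac{1}{2}\|\nabla f_i\|_{L^2}^2 \leq \frac{1}{2}(c_K r(K))^2 =: c_e$ uniformly in $i$, so the hypotheses of Walpuski's compactness theorem are met. Theorem \ref{prop:wal3} then yields, after passing to a subsequence, a closed rectifiable set $S = \Gamma \cup \text{sing}(f)$ and a Fueter section $f$ on $M \setminus S$ with $f_i \to f$ in $C^\infty_{\text{loc}}(M \setminus S)$. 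By Theorem \ref{no-bubbling-locus} the bubbling locus already vanishes, $\Gamma = \emptyset$, since $\text{Mon}_2^\circ(\mathbb{R}^3)$ contains no nonconstant holomorphic spheres. Thus the entire theorem reduces to the single claim $\text{sing}(f) = \emptyset$: once this holds, $S = \emptyset$, and because $M$ is compact the $C^\infty_{\text{loc}}$ convergence on $M \setminus S = M$ upgrades to $C^\infty(M)$ convergence to a globally smooth Fueter section.

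To prove $\text{sing}(f) = \emptyset$ I would argue by contradiction. Suppose $x \in \text{sing}(f)$. By Theorem \ref{sing-fueter} the rescalings of $f$ at $x$ subconverge to a homogeneous tangent map $\varPhi_x$ on $T_xM \setminus \{0\}$, and by Theorem \ref{non-constant-tangent} this map is non-trivial. Restricting $\varPhi_x$ to the unit sphere produces a map $\varphi_x : S^2 \to \mathfrak{X}_x \cong \text{Mon}_2^\circ(\mathbb{R}^3)$ which, by Theorem \ref{tri-hol-image}, is a non-constant $-I$-tri-holomorphic map with \emph{contractible} image.

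The contradiction I would then extract uses the minimal-surface structure of tri-holomorphic maps together with Tsai--Wang's classification. By Corollary \ref{tri-hol-min} (applied with $-I$ in place of $I$, as permitted by the remark following it), the image $\varphi_x(S^2)$ is a compact minimal surface in the Atiyah--Hitchin space. But Theorem \ref{Tsai-Wang-thm} states that the only compact minimal surface there is the axisymmetric $\Sigma_2 \cong \mathbb{RP}^2$; since the Atiyah--Hitchin manifold deformation retracts onto $\Sigma_2$, this surface is non-contractible, contradicting the contractibility of $\varphi_x(S^2)$ from Theorem \ref{tri-hol-image}. Hence no non-removable singularity can occur, $\text{sing}(f) = \emptyset$, and the theorem follows. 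I expect the delicate point to be precisely this final topological step: a branched double cover $S^2 \to \mathbb{RP}^2 = \Sigma_2$ would still have minimal image, so the topological type of the domain alone cannot close the argument --- what is essential is that the \emph{image} be shown null-homotopic, which is exactly the content of Theorem \ref{tri-hol-image}, and that $\varphi_x(S^2)$ genuinely qualify as a compact minimal surface to which Theorem \ref{Tsai-Wang-thm} applies.
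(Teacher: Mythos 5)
Your proposal is correct and follows essentially the same route as the paper's proof: energy bound, Walpuski's compactness theorem, vanishing of the bubbling locus via Theorem \ref{no-bubbling-locus}, and then exclusion of non-removable singularities by combining the non-trivial, contractible-image tri-holomorphic tangent map (Theorems \ref{sing-fueter}, \ref{tri-hol-image}, \ref{non-constant-tangent}, Corollary \ref{tri-hol-min}) with Tsai--Wang's uniqueness of the compact minimal surface $\Sigma_2$. The only cosmetic difference is your justification that $\Sigma_2$ is non-contractible in the Atiyah--Hitchin space (deformation retraction onto $\Sigma_2$) where the paper instead invokes the calibration property and the fact that this $\mathbb{RP}^2$ generates $\pi_2(\text{Mon}_2^{\circ}(\mathbb{R}^3))$; both are valid, and your closing remark correctly identifies that the contractibility of the \emph{image} is the essential point.
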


The vanishing of the singular locus relies on two observations. Firstly, by the Theorem of Tsai and Wang \ref{Tsai-Wang-thm}, there is only one minimal sphere (in fact $\mathbb{RP}^2$) in the moduli space of centered monopoles with charge $2$. Secondly, following Theorems \ref{tri-hol-image} and \ref{non-constant-tangent}, this unique minimal sphere cannot be a source of non-removable singularity. 

\begin{proof}[Proof of Theorem \ref{Main-theorem}]
Following Theorem \ref{energy-bound}, there is a bound on the energy of $f_i$ for all $i$. Therefore, we can apply Walpuski's compactness theorem. There exists a subsequence $f_{i_j} \to f$ on $M \setminus S$ in $C^{\infty}_{\text{loc}}$, where $S = \Gamma \cup \text{sing}(f)$. 

Theorem \ref{no-bubbling-locus} shows $\Gamma = \emptyset$. Therefore, $S = \text{sing}(f)$. Let $x \in \text{sing}(f)$. By Theorem \ref{sing-fueter}, we have a tangent map at this point, $\varPhi_x : \mathbb{R}^3 \to (\mathfrak{X}_2)_x$, and by the restriction to the 2-sphere, a $-I$-tri-holomorphic map $\varphi_x : S^2 \to (\mathfrak{X}_2)_x$. Theorem \ref{non-constant-tangent} shows $\varphi_x$ is a non-trivial $-I$-tri-holomorphic map, and therefore, by Corollary \ref{tri-hol-min}, its image is a non-trivial compact minimal surface. 

Based on the theorem of Tsai and Wang, the image can only be the minimal surface of axisymmetric monopoles. However, Theorem \ref{tri-hol-image} shows that the image of $\varphi_x$ is contractible in the moduli space of charge 2 centered $SU(2)$ monopoles on $\mathbb{R}^3$, but this is not the case for the minimal surface of axisymmetric monopoles, since it is a calibrated submanifold. In fact, this $\mathbb{RP}^2$ is the generator of $\pi_2(\text{Mon}_2^{\circ}(\mathbb{R}^3))$, and therefore, $S = \emptyset$.
\end{proof}

\begin{remark}
    Let $\varPhi: \mathbb{R}^3 \setminus \{0\} \to \text{Mon}_2^{\circ}(\mathbb{R}^3)$ be the map which sends any $x \in \mathbb{R}^3 \setminus \{0\}$ to the unique centered charge 2 monopole on $\mathbb{R}^3$ which is invariant under rotation around the axis generated by $x$. $\varPhi$ is a Fueter map, and $\text{Image}(\varPhi) = \Sigma_2$. The map $\varPhi$ has a non-removable singularity at the origin $0 \in \mathbb{R}^3$. However, $\varPhi$ cannot appear as a tangent map of a limiting Fueter section $f$.
\end{remark}

In the following subsection, with a small digression, we give a description of tri-holomorphic spheres in terms of sections of twistor spaces.

\subsection{A twistorial description of tri-holomorphic maps}\label{twis}

A standard method to study hyperk\"ahler manifolds is to use the twistorial description, introduced by Penrose in \cite{MR216828}, and more relevant to our setup by Hitchin in \cite{MR877637}.

In this section, we give a twistorial description of the tri-holomorphic maps and show that the $-I$-tri-holomorphic maps introduced by the non-removable singularities of Fueter sections can be understood as sections of the twistor spaces over $\mathbb{CP}^1$, where these sections are holomorphic with respect to a certain non-integrable almost complex structure.

As we mentioned earlier, if $(X, g_X, I, J, K)$ is a hyperk\"ahler manifold, then any $(aI + bJ + cK)$ with $a^2 + b^2 + c^2 = 1$ is also a covariantly constant complex structure on $X$. The twistor space $Z$ associated to a $4k$-dimensional hyperk\"ahler manifold $X$, topologically, is defined to be the product manifold $X \times S^2$ with the almost complex structure 
\begin{align*}
   \mathcal{J}_1 = (aI + bJ + cK, j),
\end{align*}
at the point $(x,a,b,c) \in X \times S^2$, where $j$ is the standard complex structure on $S^2$. We call this complex structure the standard complex structure of the twistor space. This complex structure is integrable.

\begin{theorem} [Atiyah-Hitchin-Singer \cite{MR877637, MR506229}]
$(Z,\mathcal{J}_1)$ is a complex manifold. 
\end{theorem}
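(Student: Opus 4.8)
The statement is the integrability of the tautological almost complex structure $\mathcal{J}_1$ on the twistor space $Z = X \times S^2$, so the plan is to verify the Newlander--Nirenberg criterion: I will show that the Nijenhuis tensor $N_{\mathcal{J}_1}$ vanishes identically. The only structure I have to work with is that $X$ is hyperk\"ahler, so $I$, $J$, $K$ are parallel for the Levi-Civita connection $\nabla^X$, and that $S^2 \cong \mathbb{CP}^1$ carries its integrable complex structure $j$ with $\nabla^{S^2} j = 0$. The efficient way to exploit this parallelism is to compute $N_{\mathcal{J}_1}$ through a torsion-free connection rather than through Lie brackets directly.

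First I would equip $Z$ with the product connection $\nabla^Z = \nabla^X \oplus \nabla^{S^2}$, which is torsion-free, and use the standard identity expressing the Nijenhuis tensor of an almost complex structure $A$ through any torsion-free connection,
\begin{align*}
N_A(U,V) = (\nabla_{AU}A)V - (\nabla_{AV}A)U - A\big((\nabla_U A)V - (\nabla_V A)U\big).
\end{align*}
The decisive simplification is that $\nabla^Z \mathcal{J}_1$ is supported on a single block. Along horizontal directions (tangent to $X$, with $\zeta \in S^2$ fixed) the endomorphism acts as $I_\zeta = aI + bJ + cK$ with constant $a,b,c$, so $\nabla^X I_\zeta = 0$; along $S^2$ the vertical block is $j$ with $\nabla^{S^2} j = 0$. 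Hence the only nonzero component of $\nabla^Z \mathcal{J}_1$ is the derivative of the horizontal block in a vertical direction $W$, which equals $D_W I$, the derivative of the tautological map $S^2 \to \mathrm{End}(TX)$, $\zeta \mapsto I_\zeta$.

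The heart of the argument is an algebraic identity coupling this derivative to $j$. Viewing $S^2 \subset \mathrm{Im}\,\mathbb{H}$ and $\zeta \mapsto I_\zeta$ as the restriction of a linear map, its derivative at $\zeta$ in a direction $W \in T_\zeta S^2 = \zeta^\perp$ is again $W$ acting as an imaginary quaternion; the quaternionic relation $I_\zeta I_W = I_{\zeta \times W}$ for $W \perp \zeta$, together with $jW = \zeta \times W$, then yields
\begin{align*}
D_{jW} I = I_\zeta\, D_W I.
\end{align*}
I would then run the three cases. For two horizontal or two vertical arguments all the $\nabla^Z \mathcal{J}_1$ terms differentiate in directions that annihilate the relevant block, so $N_{\mathcal{J}_1}$ vanishes, recovering the integrability of each fixed $I_\zeta$ on $X$ and of $j$ on $S^2$. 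The only genuinely nontrivial case is one horizontal argument $U$ and one vertical argument $V$: here the formula reduces to $N_{\mathcal{J}_1}(U,V) = -(D_{jV}I)U + I_\zeta (D_V I)U$, which is exactly zero by the displayed identity. This mixed term is the main obstacle, since it is where the $\zeta$-dependence of $\mathcal{J}_1$ could have destroyed integrability; the cancellation says precisely that the family $\zeta \mapsto I_\zeta$ is holomorphic in $\zeta$. Having shown $N_{\mathcal{J}_1} \equiv 0$, the Newlander--Nirenberg theorem gives that $\mathcal{J}_1$ is integrable, so $(Z,\mathcal{J}_1)$ is a complex manifold.
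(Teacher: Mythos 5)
Your proof is correct, but note that the paper itself offers no proof of this statement: it is quoted as a black-box citation to Atiyah--Hitchin--Singer and Hitchin, so there is no internal argument to compare against. What you have written is a complete, self-contained verification, and it is essentially the standard direct argument for the hyperk\"ahler case: the torsion-free-connection formula for the Nijenhuis tensor is correct, the observation that $\nabla^Z\mathcal{J}_1$ is concentrated in the single mixed block $(\nabla_W\mathcal{J}_1)\big|_{TX} = D_W I = I_W$ for $W$ vertical is the right reduction, and the quaternionic identity $I_\zeta I_W = I_{\zeta\times W} = I_{jW}$ for $W\perp\zeta$ is exactly what kills the mixed term $N(U,V) = -(D_{jV}I)U + I_\zeta(D_VI)U$. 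Two small points worth tightening. First, your phrase that in the vertical--vertical case the terms ``differentiate in directions that annihilate the relevant block'' is slightly off: there the differentiation directions are vertical, so $\nabla\mathcal{J}_1$ is nonzero, but it is \emph{applied} to vertical vectors, on which it vanishes; the conclusion is the same. Second, the sign convention $jW = \zeta\times W$ is not cosmetic: with the opposite orientation on $S^2$ the mixed term becomes $2I_\zeta I_V U \neq 0$ and integrability fails, which is the well-known orientation-sensitivity of twistor constructions, so it is worth flagging that the ``standard'' complex structure in the paper's definition must be the one compatible with the outward orientation of $S^2\subset\mathbb{R}^3$. By contrast, the cited sources prove integrability in greater generality (Atiyah--Hitchin--Singer for anti-self-dual $4$-manifolds via the Penrose-type obstruction, Hitchin et al.\ via holomorphic data on the twistor fibration over $\mathbb{CP}^1$); your argument trades that generality for a short, purely tensorial computation that suffices for the hyperk\"ahler setting actually used in the paper.
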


There is also a second almost complex structure defined on $Z$, introduced by Eells and Salamon \cite{MR848842}. This almost complex structure on the twistor space is defined by 
\begin{align*}
   \mathcal{J}_2 = (-aI - bJ - cK, j), 
\end{align*}
at each point $(x,a,b,c) \in X \times S^2$. 

This almost complex structure is called the Eells-Salamon almost complex structure. These two complex structures are quite different. For instance, we have the following theorem.

\begin{theorem}[Eells-Salamon \cite{MR848842}]
The almost complex structure $\mathcal{J}_2$ on $Z$ is non-integrable.
\end{theorem}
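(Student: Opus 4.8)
The plan is to use the elementary direction of the Newlander--Nirenberg theorem: an integrable almost complex structure has an involutive $(+i)$-eigenbundle. Thus it suffices to produce two sections of $T^{1,0}_{\mathcal{J}_2}Z$ whose Lie bracket escapes $T^{1,0}_{\mathcal{J}_2}Z$ at a single point. First I would identify this eigenbundle explicitly. Since $\mathcal{J}_2$ acts as $-I_\sigma$ on the $X$-factor and as $j$ on the $S^2$-factor, its $(+i)$-eigenbundle is $T^{1,0}_{\mathcal{J}_2}Z = T^{0,1}_{I_\sigma}X \oplus \mathbb{C}\partial_\zeta$, where $T^{0,1}_{I_\sigma}X$ is the $(-i)$-eigenbundle of $I_\sigma = aI+bJ+cK$ on $T_{\mathbb{C}}X$ and $\partial_\zeta$ spans the $j$-holomorphic line on $S^2$. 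The contrast with $\mathcal{J}_1$, whose eigenbundle is $T^{1,0}_{I_\sigma}X \oplus \mathbb{C}\partial_\zeta$ and which is given to be integrable, will guide the computation.

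The heart of the matter is to pin down how the eigenbundles of $I_\sigma$ rotate as $\sigma$ moves on $S^2$. I would work near the pole $\sigma_0=(1,0,0)$, corresponding to the complex structure $I$, using the twistor coordinate $\zeta = (b+ic)/(1+a)$, so that $\zeta=0$ at $\sigma_0$ and $\partial_\zeta$ is the $(1,0)$-vector of $j$. Using the quaternion relations $IJ=-JI=K$, a direct eigenvector computation shows that for $v \in T^{1,0}_I X$ one has $(aI+bJ+cK)(v - i\bar\zeta\,Jv) = i(v - i\bar\zeta\,Jv)$, whence $T^{1,0}_{I_\sigma}X = \{\,v - i\bar\zeta\,Jv : v \in T^{1,0}_I X\,\}$ and, by conjugation, $T^{0,1}_{I_\sigma}X = \{\,w + i\zeta\,Jw : w \in T^{0,1}_I X\,\}$, where $Jw \in T^{1,0}_I X$. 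The crucial feature is that the $(1,0)$-bundle of $I_\sigma$ depends \emph{anti}-holomorphically on $\zeta$, while the $(0,1)$-bundle depends \emph{holomorphically}; this single sign is the entire mechanism. As a sanity check, for $\mathcal{J}_1$ the natural frame $s_v = v - i\bar\zeta\,Jv$ satisfies $\partial_\zeta s_v = 0$, which (together with the horizontal--horizontal brackets controlled by hyperkähler flatness) is exactly why $\mathcal{J}_1$ is integrable.

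With this in hand the last step is a mixed vertical--horizontal bracket. For a fixed ($\zeta$-independent) section $w$ of $T^{0,1}_I X$ over $X$, set $t_w := w + i\zeta\,Jw$, a section of $T^{0,1}_{I_\sigma}X \subset T^{1,0}_{\mathcal{J}_2}Z$. Because $\partial_\zeta$ is tangent to the $S^2$-factor and $x$-independent while $t_w$ is horizontal, the Lie bracket collapses to the $\zeta$-derivative of the coefficients, $[\partial_\zeta, t_w] = \partial_\zeta t_w = iJw$. Evaluating at $\zeta=0$, the vector $iJw$ lies in $T^{1,0}_I X$, whereas $T^{0,1}_{I_\sigma}X|_{\zeta=0} = T^{0,1}_I X$, and these meet only in $0$; since $iJw$ is horizontal it cannot acquire a $\partial_\zeta$-component either. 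Hence $[\partial_\zeta, t_w] \notin T^{1,0}_{\mathcal{J}_2}Z$ at $(x,\sigma_0)$ whenever $w \neq 0$, so the $(+i)$-eigenbundle of $\mathcal{J}_2$ is not involutive and $\mathcal{J}_2$ is non-integrable.

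I expect the main obstacle to be bookkeeping the $\zeta$ versus $\bar\zeta$ dependence of the eigenbundles correctly and matching orientation conventions so that the given integrability of $\mathcal{J}_1$ is reproduced; once that sign is fixed, the bracket computation is immediate. Equivalently, one could compute the Nijenhuis tensor $N_{\mathcal{J}_2}(U,\partial_\zeta)$ for $U$ horizontal and read off the same nonvanishing term $iJw$, but the involutivity formulation avoids the four-term expansion.
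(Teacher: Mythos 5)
Your proof is correct, but note that the paper does not actually prove this statement: it is quoted from Eells--Salamon \cite{MR848842} without argument, so there is no internal proof to compare against, and your write-up functions as a self-contained substitute. Your route is the standard Frobenius one, and the key identity holds exactly, not merely to first order: with $a = (1-|\zeta|^2)/(1+|\zeta|^2)$ and $b + ic = 2\zeta/(1+|\zeta|^2)$ (so that $\zeta = (b+ic)/(1+a)$), one computes for $v \in T^{1,0}_I X$ that $I_\sigma v = iav + (b-ic)Jv$ and $I_\sigma (Jv) = -(b+ic)v - iaJv$, and the coefficients recombine to give $I_\sigma\bigl(v - i\bar\zeta\, Jv\bigr) = i\bigl(v - i\bar\zeta\, Jv\bigr)$ on the nose; conjugating yields your holomorphic dependence of $T^{0,1}_{I_\sigma}X$ on $\zeta$, and then $[\partial_\zeta, w + i\zeta\, Jw] = iJw \in T^{1,0}_I X \setminus \{0\}$ indeed escapes $T^{0,1}_I X \oplus \mathbb{C}\partial_\zeta$ at $\zeta = 0$, killing involutivity. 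Your handling of the one genuine convention ambiguity (whether $\zeta$ or $\bar\zeta$ is the $j$-holomorphic coordinate) is also legitimate: with the opposite convention the identical bracket computation would contradict the Atiyah--Hitchin--Singer integrability of $\mathcal{J}_1$, which the paper quotes, so that theorem rigorously pins the sign rather than being a mere heuristic. Two small observations: your use of the product structure $Z = X \times S^2$ with horizontal distribution $TX$ is valid here precisely because $I, J, K$ are parallel, which is how the paper defines $Z$ for hyperk\"ahler $X$; in the general Riemannian twistor setting of Section \ref{twis}, where the horizontal distribution is the Levi-Civita one, Eells and Salamon instead compute the torsion of $\mathcal{J}_2$ directly and find a universal nonvanishing vertical contribution, which is why $\mathcal{J}_2$ is \emph{never} integrable while the integrability of $\mathcal{J}_1$ requires a curvature condition. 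Also, since $[\partial_\zeta, w + i\zeta\, Jw] = iJw$ at every $\zeta$ and $iJw$ meets $T^{0,1}_{I_\sigma}X$ only in $0$, your argument actually shows $N_{\mathcal{J}_2} \neq 0$ at every point of $Z$, slightly more than the single-point failure you claimed to need.
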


As observed by Eells and Salamon, the twistor space equipped with this non-integrable complex structure is quite suitable for studying the harmonic maps into the hyperk\"ahler manifolds. It turns out that the Eells-Salamon twistor space $(Z, \mathcal{J}_2)$ gives an amenable description of the $-I$-tri-holomorphic spheres in $X$.

\begin{theorem}\label{J2holomorphic}
Let $(X, g_X, I, J, K)$ be a $4k$-dimensional hyperk\"ahler manifold. Let $Z = S^2 \times X$ be the twistor space. For any map $f : S^2 \rightarrow X$, let $\tilde{f}: S^2 \rightarrow Z$ be the map defined by
\begin{align*}
    \tilde{f} (x) := (x , f(x)).
\end{align*}
Then, the map $f: S^2 \to X$ is a $-I$-tri-holomorphic map if and only if $\tilde{f}$ is a $\mathcal{J}_2$-holomorphic section of the twistor space $Z \to S^2$.
\end{theorem}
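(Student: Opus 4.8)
The plan is to reduce the statement to a pointwise equivalence of two linear-algebraic conditions on $d_pf$, working in the global product trivialization $Z = S^2 \times X$. First I would fix $p \in S^2$, regard $p$ as a unit vector $(a,b,c)$, and write $I(p) = aI + bJ + cK$ for the associated complex structure on $X$. Since $Z$ is literally a product, its tangent space splits canonically as $T_{(p,x)}Z = T_p S^2 \oplus T_x X$, and by the definition recalled above the Eells-Salamon structure acts block-diagonally on this splitting: as $j$ on the $T_pS^2$ summand and as $-I(p)$ on the $T_xX$ summand. This is the one structural fact I would isolate at the start: because $X$ is hyperk\"ahler the complex structures $I,J,K$ are parallel, so in this trivialization $\mathcal{J}_2$ really is the block sum with no off-diagonal (connection) terms, which is what makes the computation of $d\tilde f$ clean.

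Next I would differentiate the section. Because $\tilde f(p) = (p, f(p))$, for $v \in T_p S^2$ one has $d_p\tilde f(v) = (v, d_pf(v))$. The condition that $\tilde f$ be a $\mathcal{J}_2$-holomorphic map is $d\tilde f \circ j = \mathcal{J}_2 \circ d\tilde f$. Evaluating both sides on $v$ and using the block form of $\mathcal{J}_2$ gives $(jv,\, d_pf(jv))$ on the left and $(jv,\, -I(p)\,d_pf(v))$ on the right. The $T_pS^2$-components agree automatically, which is exactly the statement that $\tilde f$ is a section of $Z \to S^2$, so holomorphicity can only constrain the vertical (fiber) component, while the $T_xX$-components yield
\begin{align*}
    d_pf(jv) + I(p)\, d_pf(v) = 0, \qquad \forall\, v \in T_p S^2.
\end{align*}

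Finally I would match this with the definition of a tri-holomorphic map. Recalling that $f$ is $-I$-tri-holomorphic precisely when $d_pf$ intertwines $j$ with $-I(p)$, i.e. $d_pf \circ j = -I(p)\circ d_pf$, which is exactly the displayed identity, the equivalence follows, and reading the computation in either direction gives both implications of the biconditional. I do not expect any analytic obstacle: the proof is a direct verification once the product structure of $(Z,\mathcal{J}_2)$ is made explicit. The only point that genuinely requires care — and where the content of the theorem sits — is the sign bookkeeping: it is the $-I(p)$ rather than $+I(p)$ in $\mathcal{J}_2$ that forces $-I$- rather than $I$-tri-holomorphicity, and I would check that the orientation convention identifying $j\,\partial_{x_1} = \partial_{x_2}$ on $S^2$ is consistent with the orthonormal-frame convention in the definition of tri-holomorphic maps, so that no spurious sign is introduced.
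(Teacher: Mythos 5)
Your proof is correct and takes essentially the same approach as the paper: both reduce the statement to a pointwise linear-algebra check using the product splitting $T_{(p,x)}Z = T_pS^2 \oplus T_xX$ and the block-diagonal action of $\mathcal{J}_2$, with the horizontal component holding automatically and the vertical component yielding exactly the $-I$-tri-holomorphic equation. The paper packages this as the one-line identity $2\overline{\partial}_{\mathcal{J}_2}\tilde{f} = d\tilde{f} + \mathcal{J}_2 \circ d\tilde{f} \circ j = \bigl(0,\, 2\overline{\partial}_{-I(x)}f\bigr)$, which is equivalent to your formulation $d\tilde{f} \circ j = \mathcal{J}_2 \circ d\tilde{f}$.
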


\begin{proof}
The theorem follows from the following,
\begin{align*}
    2 \overline{\partial}_{\mathcal{J}_2} \tilde{f}(x) = 
    d_x\tilde{f} + 
    \mathcal{J}_2 \circ d_x \tilde{f} \circ j = (0, df - I(x) \circ d_x f \circ j) = (0, 2 \overline{\partial}_{-I(x)} f).
\end{align*}
\end{proof}

In fact, the twistor space can be defined for any oriented Riemannian $4$-manifold $X$. The Grassmann bundle of 2-planes
\begin{align*}
Z := \widetilde{Gr}_2(TX) \to X,   
\end{align*}
is a vector bundle whose fiber at the point $x \in X$ is the space of real oriented two dimensional subspaces in $T_x X$, denoted by $\widetilde{Gr}_2(T_xX)$, which is the double cover of $Gr_2(T_xX)$.  

Let
\begin{align*}
    S_{\pm} = S(\Lambda^{\pm} T^*X),
\end{align*}
be the unit 2-sphere bundles of self-dual and anti-self-dual 2-forms on $X$. We have the following bundle isomorphism,
\begin{align*}
    \widetilde{Gr}_2(TX) \cong S_+ \times S_-.
\end{align*}
The bundle $S_-$ generalizes the twistor bundle $Z$ we constructed above for hyperk\"ahler 4-manifolds to general oriented Riemannian 4-manifolds. The almost complex structure $\mathcal{J}_2$ can also be defined on $S_-$ \cite{MR848842}. 

We have projection maps
\begin{align*}
    \pi_{\pm} : \widetilde{Gr}_2(TX) \rightarrow S_{\pm}.
\end{align*}
For any map $f : S^2 \to X$, let $\widetilde{f}_{\pm} : S^2 \to S_{\pm}$ be the maps defined by
\begin{align*}
 \widetilde{f}_{\pm} = \pi_{\pm} \circ \widetilde{f}.    
\end{align*}

\begin{theorem}[Eells-Salamon \cite{MR848842}] \label{ES}
Let $(X, g_X)$ be an oriented Riemannian 4-manifold. The correspondences
\begin{align*}
    f \to \widetilde{f}_{+}, \quad  \text{ and } \quad f \to \widetilde{f}_{-},
\end{align*}
are bijective correspondences between non-constant conformal harmonic maps $f:S^2 \rightarrow X$ and non-trivial $\mathcal{J}_2$-holomorphic maps $\widetilde{f}_{\pm}: S^2 \rightarrow S_{\pm}$.
\end{theorem}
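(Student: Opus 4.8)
The plan is to realize both correspondences through a Gauss-type twistor lift and to reduce the statement to a pointwise identity relating the $\bar\partial$-operator of the lift (with respect to $\mathcal{J}_2$) to the second fundamental form of $f$, in the spirit of the computation already carried out in Theorem \ref{J2holomorphic} for the hyperk\"ahler model with its parallel complex structures. First I would recall the geometric structure of the twistor bundles. Using the Levi-Civita connection of $(X,g_X)$, the tangent bundle $TS_\pm$ splits as a direct sum $\mathcal{H}\oplus\mathcal{V}$ of a horizontal subbundle canonically isomorphic to $\pi_\pm^* TX$ and a vertical subbundle $\mathcal{V}=\ker d\pi_\pm$, which is tangent to the $S^2$-fibers and carries the standard fiber complex structure. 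At a point $\sigma\in S_\pm$ lying over $x\in X$ and corresponding to a compatible (anti-)self-dual almost complex structure $J_\sigma$ on $T_xX$, the Eells--Salamon structure $\mathcal{J}_2$ acts as $-J_\sigma$ on $\mathcal{H}$ and as the standard structure $j_{\mathcal V}$ on $\mathcal{V}$. This sign on the horizontal part is exactly the difference from the integrable $\mathcal{J}_1$, and it is what makes harmonicity, rather than pseudo-holomorphicity, the relevant condition.

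Next, given a non-constant conformal map $f:S^2\to X$, I would define the lift $\widetilde f$ by sending $x$ to the oriented tangent plane $df_x(T_xS^2)\subset T_{f(x)}X$ at immersion points, extending across the isolated branch points by continuity, and set $\widetilde f_\pm=\pi_\pm\circ\widetilde f$. The heart of the proof is to compute $\bar\partial_{\mathcal{J}_2}\widetilde f_\pm=\tfrac12\big(d\widetilde f_\pm+\mathcal{J}_2\circ d\widetilde f_\pm\circ j\big)$ and to split it along $\mathcal{H}\oplus\mathcal{V}$. The horizontal component equals $\tfrac12\big(df-J_{\widetilde f_\pm}\circ df\circ j\big)$; by the very definition of the lift, $-J_{\widetilde f_\pm(x)}$ is the compatible complex structure rotating the oriented tangent plane, so $df_x$ is complex linear with respect to $-J_{\widetilde f_\pm(x)}$ and the horizontal component vanishes identically. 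This is precisely the encoding of conformality. The vertical component is the covariant derivative $\nabla\widetilde f_\pm$ of the section $\widetilde f_\pm$, which measures the variation of the tangent plane and is therefore built from $\nabla df$; I would show that its $\mathcal{J}_2$-antilinear part is a nonzero multiple of the trace $\operatorname{trace}_g\nabla df=\tau(f)$, so that the vertical component vanishes exactly when $f$ is harmonic.

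For the converse I would start with a non-trivial $\mathcal{J}_2$-holomorphic map $g:S^2\to S_\pm$, set $f=\pi_\pm\circ g$, and read the two components backwards: vanishing of the horizontal part forces $df$ to be $(-J_g)$-linear, whence $f$ is weakly conformal and $J_g$ preserves the oriented plane $df(T_xS^2)$, so $g$ coincides with the tangent-plane lift $\widetilde f_\pm$ wherever $df\neq 0$ (non-triviality together with the unique-continuation property of $\mathcal{J}_2$-holomorphic maps ensures the degeneracy locus is discrete), while vanishing of the vertical part delivers $\tau(f)=0$. Composing the two constructions yields mutually inverse maps, establishing the bijections. The main obstacle will be the vertical computation: one must express $\nabla\widetilde f_\pm$ through the second fundamental form, carefully track the Levi-Civita connection terms that enter because $S_\pm\to X$ is a nontrivial bundle over a general $X$ (in contrast to the parallel frame used in Theorem \ref{J2holomorphic}), and verify that the precise sign built into $\mathcal{J}_2$ converts the trace condition into $\tau(f)=0$ rather than into the stronger $\mathcal{J}_1$-holomorphicity. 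I would organize this by working at a fixed point in geodesic normal coordinates on $X$ with a parallel orthonormal frame, so that the connection terms drop out at the point and the identity collapses to the pointwise linear-algebra statement already verified in the flat model.
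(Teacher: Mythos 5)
You cannot be checked against an internal argument here: the paper states this result purely as a citation to Eells--Salamon \cite{MR848842} and proves nothing beyond the hyperk\"ahler model computation of Theorem \ref{J2holomorphic}, so the baseline is the original Eells--Salamon proof --- which is, in substance, exactly what you outline (Gauss lift of the oriented tangent plane, splitting $TS_{\pm}=\mathcal{H}\oplus\mathcal{V}$ via the Levi-Civita connection, horizontal part of $\overline{\partial}_{\mathcal{J}_2}$ encoding conformality, vertical part encoding harmonicity through the tension field), with sign conventions consistent with the paper's $\mathcal{J}_2=(-aI-bJ-cK,j)$. The forward direction is sound modulo one refinement: extending the lift across branch points ``by continuity'' is not enough, since continuity of $\widetilde{f}_{\pm}$ at a branch point does not by itself give smoothness or $\mathcal{J}_2$-holomorphicity there. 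You need the standard fact that for a non-constant conformal harmonic $f$ the derivative $\partial f$ is a holomorphic section of a Koszul--Malgrange holomorphic bundle over $S^2$, hence vanishes to finite order at the branch points; this is what makes the tangent-plane lift extend smoothly (alternatively, a removable-singularity argument for the lift).

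The genuine gap is in your converse. The fibers of $\pi_{\pm}:S_{\pm}\to X$ are themselves non-constant $\mathcal{J}_2$-holomorphic spheres (on $\mathcal{V}$ the structure $\mathcal{J}_2$ is the standard fiber structure), and they project to constant maps; so ``non-trivial'' in the statement must be read as ``non-vertical,'' as in Eells--Salamon's original formulation, and your argument must explicitly dispose of this case --- as written it silently assumes $d\pi_{\pm}\circ dg\not\equiv 0$. Relatedly, your appeal to ``unique continuation of $\mathcal{J}_2$-holomorphic maps'' does not deliver discreteness of the degeneracy locus $\{df=0\}$ for $f=\pi_{\pm}\circ g$: unique continuation constrains $g$ itself, not its horizontal projection, and a priori a weakly conformal map can vanish on a large set. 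The correct tool is again Koszul--Malgrange: the horizontal component of $dg$, which is $(-J_g)$-linear by the vanishing of the horizontal part of $\overline{\partial}_{\mathcal{J}_2}g$, is a holomorphic section of a suitable holomorphic bundle over $S^2$, so it either vanishes identically --- the vertical case, excluded from the correspondence --- or vanishes on a discrete set, on whose complement $g$ coincides with the Gauss lift of $f$ and then everywhere by continuity. With this dichotomy supplied, your two constructions are mutually inverse and the proof closes along the lines you propose.
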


This theorem gives another argument for Corollary \ref{tri-hol-min} that the image of a $-I$-tri-holomorphic map $f: S^2 \to X$ is a minimal sphere.

\begin{proof}[Second Proof for Corollary \ref{tri-hol-min} ]
Let $f:S^2 \rightarrow X$ be a  $-I$-tri-holomorphic map. The corresponding section of $Z \to S^2$, denoted by $\tilde{f}$, is a $\mathcal{J}_2$-holomorphic section of the bundle, and therefore, $\tilde{f}_-$ is a $\mathcal{J}_2$-holomorphic section of $S_- \to S^2$. By the Theorem \ref{ES}, $f$ is a conformal harmonic map, and therefore, its image in $X$ is a minimal surface in $X$. 
\end{proof}

\section{Appendix: 4-dimensional case}

In this appendix, we study 4-dimensional Fueter sections and prove a compactness theorem for these Fueter sections in the case of monopole bundles with charge 2.

We give four items of motivation for this study.

\begin{itemize}
    \item The cylindrical 4-dimensional Fueter sections appear in the definition of differentials of the monopole Fueter Floer homology of 3-manifolds. 

    \item The space of Fueter sections of monopole bundles over 4-manifolds can potentially be used to define a smooth 4-manifold invariant. This invariant can be thought of as a 4-dimensional version of Rozansky-Witten invariants.

    \item Fueter sections of monopole bundles over coassiative 4-manifolds in $G_2$-manifolds appear in the study of $G_2$-monopoles. One expects a $G_2$-version of the Donaldson-Segal conjecture, i.e., a correspondence between the moduli spaces of $G_2$-monopoles and coassociative submanifolds weighted with the Fueter sections of monopole bundles above them.

    \item The analysis is expected to be relevant to a 4-dimensional version of Taubes' generalized Seiberg-Witten equations.
    
\end{itemize}

We start with the definition of the 4-dimensional Fueter sections.

\begin{definition} Let $(N,h)$ be an orientable Riemannian 4-manifold. Let $\mathfrak{X} \to N$ be a hyperk\"ahler bundle with a connection $\nabla$, with a fixed isometric bundle identification $I: S (\Lambda^+ T^*N) \to \mathfrak{b}(\mathfrak{X})$, and $\iota : \Lambda^+ T^*N \to \mathfrak{so}(T^*N)$, where $S (\Lambda^+ T^*N)$ is the unit sphere bundle of self-dual 2-forms on $N$ and $\mathfrak{b}(\mathfrak{X})$ is the 2-sphere bundle of K\"ahler structures of the fibers of $\mathfrak{X}$. A section $f \in \Gamma(\mathfrak{X})$ is called a 4-dimensional Fueter section if
\begin{align*}
    \mathfrak{F}_4(f) := \nabla f - \sum_{i=1}^3 I(\Omega_i) \circ \nabla f \circ \iota (\Omega_i) = 0,
\end{align*}
where $(\Omega^1, \Omega^2, \Omega^3)$ is a local orthonormal basis of $\Lambda^+ T^*N$. We call $\mathfrak{F}_4$ the 4-dimensional Fueter operator.
\end{definition}

The monopole bundle with charge $k$ over 4-manifolds is defined similar to the 3-dimensional case, where the $SO(3)$-frame bundle in the definition of the monopole bundle is replaced by the frame bundle of $\Lambda^ +T^*N$. The monopole bundle with charge $k$ is defined by $Fr_{SO(3)} \times_{SO(3)} \mathfrak{X}_k \to N$. Similar to the 3-dimensional case, there is a natural connection induced by the Levi-Civita connection, and isometric identifications $I$ and $\iota$. 

\begin{theorem}\label{four-dim-main}
Let $(N,h)$ be an oriented Riemannian 4-manifold. Let $\mathfrak{X}_2 \to N$ be the monopole bundle with charge 2. Let $K \subset \mathfrak{X}_2$ be a compact subset. Let $(f_i)_{i=1}^{\infty} \subset \Gamma(\mathfrak{X}_2)$ be a sequence of smooth 4-dimensional Fueter sections such that $\text{Image}(f_i) \subset K$ for all $i$. Then, after passing to a subsequence, $(f_i)_{i=1}^{\infty}$ converges to a smooth Fueter section $f \in \Gamma (\mathfrak{X}_2)$ in $C^{\infty}(N)$.
\end{theorem}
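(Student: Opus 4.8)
The plan is to follow the 3-dimensional argument of Theorem \ref{Main-theorem} as closely as possible, replacing Walpuski's 3-dimensional compactness machinery with its 4-dimensional analogue. The overall structure has three moving parts, each of which must be re-established in dimension four: (1) an a priori energy bound for 4-dimensional Fueter sections of $\mathfrak{X}_2$ whose image lies in a compact set $K$; (2) a 4-dimensional compactness theorem, producing a limiting Fueter section $f$ smooth away from a singular set $S = \Gamma \cup \operatorname{sing}(f)$, together with a bubbling alternative and a monotonicity formula; (3) the vanishing of both $\Gamma$ and $\operatorname{sing}(f)$ using the geometry of $\text{Mon}_2^{\circ}(\mathbb{R}^3)$.

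First I would establish the energy bound. The argument of Theorem \ref{energy-bound} is essentially local and relies only on the exactness of the hyperk\"ahler symplectic forms coming from the permuting $SO(3)$-action and on the linear growth of the primitives $\alpha_{I(e_i)}$ in the radius function $r$. In dimension four the relevant pointwise identity expresses $\tfrac12|\nabla f|^2$ in terms of $\mathfrak{F}_4(f)$ plus a closed-form term built from the $f^*\omega_{I(\Omega_i)}$ paired against the self-dual basis; integrating by parts and using $\omega_{I(\Omega_i)} = d\alpha_{I(\Omega_i)}$ together with $|\alpha_{I(\Omega_i)}(f(x))| \leq c\, r(f(x)) \leq c\, r(K)$ should give $\|\nabla f\|_{L^2(N)} \leq c_K r(K)$ exactly as before. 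This step I expect to be routine, modulo writing down the correct 4-dimensional Weitzenb\"ock-type identity.

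Next I would invoke the 4-dimensional version of Walpuski's compactness theorem: for a sequence of 4-dimensional Fueter sections with uniformly bounded energy and image in $K$, after passing to a subsequence one obtains $C^\infty_{\mathrm{loc}}$-convergence to a limiting Fueter section $f$ away from a closed rectifiable singular locus $S$ of finite $\mathcal{H}^2$-measure, decomposing as $S = \Gamma \cup \operatorname{sing}(f)$; at smooth points of the bubbling locus $\Gamma$ one extracts a nontrivial $(-I(v))$-holomorphic sphere in the fiber, and there is an almost monotonicity formula with the critical scaling now $\tfrac{1}{r^2}\int_{B_r}|\nabla f|^2$. Lemma \ref{no-bubble} (no holomorphic spheres in $\text{Mon}_k^{\circ}(\mathbb{R}^3)$ by exactness) kills the smooth part of $\Gamma$ verbatim, so $\Gamma = \emptyset$ and $S = \operatorname{sing}(f)$. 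For $\operatorname{sing}(f)$ I would run the tangent-map analysis: rescaling at a singular point $x$ and using vanishing energy loss plus monotonicity yields a homogeneous tangent map $\varPhi_x$, whose restriction $\varphi_x$ is $-I$-tri-holomorphic; by Corollary \ref{tri-hol-min} its image is a compact minimal surface, which by Theorem \ref{Tsai-Wang-thm} must be the axisymmetric $\mathbb{RP}^2$, contradicting the contractibility from Theorem \ref{tri-hol-image}. Hence $\operatorname{sing}(f) = \emptyset$.

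The main obstacle is the 4-dimensional compactness and monotonicity framework itself: Walpuski's theorem as stated in Theorem \ref{prop:wal3} is a 3-manifold result, and in dimension four the singular set is a priori $2$-dimensional ($\mathcal{H}^2$-rectifiable) rather than $1$-dimensional, so the dimension-counting argument of Theorem \ref{no-bubbling-locus}, which used $\dim\Gamma < 1$, does not transplant directly. The delicate point is showing that the tangent objects along $\operatorname{sing}(f)$ are still governed by tri-holomorphic \emph{spheres} (equivalently, that blow-ups reduce to the $\mathbb{R}^3$ picture transverse to the singular stratum, as encoded in the $-I(v)$-holomorphicity at smooth points), so that the Tsai-Wang classification and the contractibility obstruction can again be applied stratum by stratum. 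Making this reduction precise—establishing the correct monotonicity, the structure of tangent cones, and that nontrivial tangent maps force a minimal sphere realizing the generator of $\pi_2(\text{Mon}_2^{\circ}(\mathbb{R}^3))$—is where the real work lies; once it is in place, the geometric input from $\text{Mon}_2^{\circ}(\mathbb{R}^3)$ closes the argument exactly as in the 3-dimensional case.
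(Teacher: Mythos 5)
Your first two steps match the paper's proof: the energy bound is obtained exactly as you describe, by writing $\|\nabla f\|_{L^2}^2$ in terms of $\mathfrak{F}_4(f)$ and the 4-form $\Lambda_4 = \sum_i \Omega^i \wedge f^*(\omega_{I(\Omega^i)})$, integrating by parts against the primitives $\alpha_{I(\Omega^i)}$, and using the linear growth $|\alpha_{I(\Omega^i)}| \leq c\, r$; and the bubbling locus is killed by Lemma \ref{no-bubble} just as in dimension three. Your worry about the dimension count is in fact unfounded: the same logic transplants verbatim with $\mathcal{H}^2$ in place of $\mathcal{H}^1$ --- smooth points of $\Gamma$ would carry non-trivial holomorphic spheres, none exist, so $\mathcal{H}^2(\Gamma) = 0$ and hence $\Gamma = \text{supp}(\Theta \mathcal{H}^2|_S) = \emptyset$.

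The genuine gap is in your treatment of $\text{sing}(f)$, precisely at the point you flag as ``where the real work lies.'' You propose to show that tangent maps at non-removable singularities are still governed by tri-holomorphic \emph{spheres}, so that the Tsai--Wang classification of compact minimal surfaces plus the contractibility obstruction (the generator of $\pi_2(\text{Mon}_2^{\circ}(\mathbb{R}^3))$) can be reused. That is not the right reduction. Blowing up the limit section at a point $x \in \text{sing}(f)$ of a 4-manifold produces a homogeneous tangent map $\varPhi_x : \mathbb{R}^4 \setminus \{0\} \to (\mathfrak{X}_2)_x$, whose natural restriction is to the unit sphere $S^3 = SU(2)$, not to $S^2$; the resulting map $\varphi_x : S^3 \to (\mathfrak{X}_2)_x$ is a Fueter map with respect to a left-invariant frame, and its image is a non-trivial compact minimal \emph{3-dimensional} submanifold of the fiber. (Holomorphic spheres transverse to a 2-dimensional stratum arise at smooth points of the bubbling locus $\Gamma$, which you have already shown is empty --- not at point singularities of the limit $f$.) The paper closes the argument with the third item of Theorem \ref{Tsai-Wang-thm}, which your proposal never invokes: there is \emph{no} compact 3-dimensional minimal submanifold in $\text{Mon}_2^{\circ}(\mathbb{R}^3)$ whatsoever. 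With that input, $\text{sing}(f) = \emptyset$ follows immediately, with no homotopy-theoretic obstruction needed. Following your plan instead --- forcing everything down to minimal spheres and $\pi_2$ --- the argument does not close, because a non-trivial homogeneous tangent map on $\mathbb{R}^4$ has no reason to have 2-dimensional image.
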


\begin{proof}
    The analysis in the 4-dimensional case is similar to the 3-dimensional case, and therefore, we will be brief with our argument. 

    \begin{itemize}
        \item Walpuski's compactness theorem for 4-dimensional Fueter sections \cite[Theorem B.6.]{MR3718486} shows that if $\frac{1}{2} \int_N | \nabla f_i |^2 \leq c_e$, for a constant $c_e$, then, after passing to a subsequence, $f_i \to f$, for a Fueter section $f$ on the complement of a singular locus $S = \Gamma \cup \text{sing}(f)$ in $C^{\infty}_{\text{loc}}$, with $\mathcal{H}^2(S) < \infty$. The bubbling locus $\Gamma$ is  $\mathcal{H}^2$-rectifiable and is the locus where the energy loss happens. For any smooth point $x \in \Gamma$, there is a non-trivial holomorphic sphere in the fiber above $x$. The singular set is defined by $\text{sing}(f) =  \{ x \in N \; | \;  \limsup_{r \downarrow 0} \frac{1}{r^2} \int_{B_r (x)} |\nabla f|^2 > 0 \}$. Furtheremore, $\mathcal{H}^2(\text{sing}(f)) = 0$.

    \item Energy Bound: There is a bound on the energy of Fueter sections when their images are inside a compact subset of $\mathfrak{X}_2$. To see this, let $(\Omega^1, \Omega^2, \Omega^3)$ be a local orthonormal frame of $\Lambda^+ T^*N$ on a neighbourhood of $x$. Let
\begin{align*}
    \Lambda_4 := \sum_{i=1}^3 \Omega^i \wedge f^*(\omega_{I(\Omega^i)}) \in \Omega^4(N),
\end{align*}
where $\omega_{I(\Omega^i)}$ is the symplectic structure associated to the complex structure $I(\Omega^i)$. This 3-form is independent of the chosen local orthonormal frame, and therefore, it is defined globally on $N$. 

For any section $f \in \Gamma(\mathfrak{X}_2)$, we have 
\begin{align*}
    \|\nabla f\|_{L^2}^2 = \frac{1}{2}\| \mathfrak{F}_4(f)\|_{L^2}^2 - 2\int_{N} \Lambda_4.
\end{align*}
This identity follows from a pointwise computation similar to \cite[Proposition 2.2]{MR1847314}. Therefore, if $f$ is a 4-dimensional Fueter section, we have $\mathcal{E}(f) =  - \int_{N} \Lambda_4$. Let $\omega_{I(\Omega^i)} = d\alpha_{I(\Omega^i)}$. By integration by parts, 
\begin{align*}
    \|\nabla f\|_{L^2(B_{\epsilon}(x))}^2 &=
     \int_{B_{\epsilon}(x)} \sum_{i=1}^3 d\Omega^i \wedge f^*(\alpha_{I(\Omega^i)}) -  \int_{\partial B_{\epsilon}(x)} \sum_{i=1}^3 \Omega^i \wedge f^*(\alpha_{I(\Omega^i)}).
\end{align*}
Similar to the 3-dimensional case, the boundary terms cancel. The terms $\|d \Omega^i\|_{L^2}$ only depend on the geometry of $(N,h)$, and therefore, by H\"older's inequality, we have
\begin{align*}
    \|\nabla f\|_{L^2} \leq c r(K).
\end{align*}

\item The bubbling locus $\Gamma$ vanishes: This follows from the fact that the moduli space of centered monopoles on $\mathbb{R}^3$ with any charge $k$ does not contain any non-trivial holomorphic sphere, and therefore, $\mathcal{H}^2(\Gamma) = 0$. Hence, $\Gamma := \text{supp}(\Theta \mathcal{H}^2_{|_{S}}) = \emptyset$. 

\item The locus of non-removable singularities vanishes: let $x \in \text{sing}(f)$. By blowing up $f$ at $x$ one get a non-trivial homogenous tangent map
\begin{align*}
\varPhi_x: \mathbb{R}^4 \setminus \{0\} \to (\mathfrak{X}_2)_x.    
\end{align*}
The analysis uses an almost monotonicity formula for 
\begin{align*}
\frac{1}{r^2}\int_{B_r(x)} |\nabla f|^2.    
\end{align*}
By restriction of $\varPhi_x$ to the 3-sphere $S^3 \subset \mathbb{R}^4$, we get a Fueter map 
\begin{align*}
\varphi_x: S^3 = SU(2) \to (\mathfrak{X}_2)_x,    
\end{align*}
with respect to a lef-invariant frame $(v_1,v_2,v_2)$ which at $1 \in SU(2)$ is $(i,j,k)$. The image of $\varphi_x$ is a non-trivial compact contractible minimal 3-dimensional submanifold in the fiber above $x$. Following, Tsai-Wang's theorem, there is no compact minimal 3-dimensional submanifold in the moduli space of centered monopoles with charge $2$, and therefore, $\text{sing}(f) = \emptyset$. 

\end{itemize}

\end{proof}

We continue with a short discussion of the trajectories of the Floer theory. This is similar to the case studied in \cite{MR2529942}.

Let $f_{+}$ and $f_{-}$ be Fueter sections of $\mathfrak{X}_2$ over a 3-manifold $(M,g)$. Let $(f_i)_{i=1}^{\infty}$ be a sequence 4-dimensional Fueter sections over $N = M \times \mathbb{R}_t$ which appear as the trajectories between $f_{+}$ and $f_{-}$. 
\begin{align*}
    \lim_{t \to  \infty} f_{i |_{N \times \{t\}}} = f_{+}, \quad \text{ and } \quad
    \lim_{t \to -\infty} f_{i |_{N \times \{t\}}} = f_{-}.
\end{align*}
Moreover, suppose $\text{Image}(f_i) \subset K \subset \mathfrak{X}_2$, for a compact subset $K \subset \mathfrak{X}_2$ and all $i$. Then, $\mathcal{E}(f_i) \leq c_e$, for a constant $c_e$. By repeating a similar analysis as in the closed case on $M \times [a,b]$, for any $- \infty < a < b < \infty$, and ruling out the formation of bubbles and non-removable singularities, after passing to a subsequence, we have $f_i \to f$ in $C^{\infty}$, such that
\begin{align*}
    \lim_{t \to  \infty} f_{|_{N \times \{t\}}} = f_{+}, \quad \text{ and } \quad
    \lim_{t \to -\infty} f_{|_{N \times \{t\}}} = f_{-}.
\end{align*}

We finish the appendix with a rather speculative remark. 

\begin{remark}
Assuming the $k$-th monopole Fueter Floer homology can be defined for closed oriented Riemannian 3-manifolds, one can hope to define a new Calab-Yau Floer theory for Calabi-Yau 3-folds and categorify the Donaldson-Segal counts of weighted special Lagrangians. 

Let $Z$ be a Calabi-Yau 3-fold. For each class $\kappa \in H_3(Z)$, let $HM_{\kappa}(Z)$ be the monopole Calabi-Yau Floer homology of $Z$ associated to $\kappa$, defined by
\begin{align*}
    HM_{\kappa}(Z) := \bigoplus_{\kappa = \sum_i k_i [L_i]} \bigotimes_i HM_{k_i}(L_i),
\end{align*}
where $L_i$ are the special Lagrangians in $Z$ and $HM_{k_i}(L_i)$ is the $k_i$-th monopole Fueter Floer homology of $L_i$. 
\end{remark}

\hspace{10pt}

\printbibliography

\vspace{10pt}

\noindent
\author{Department of Mathematics, Duke University, Durham, NC, 27708
} \\ E-mail address: \href{ mailto:Saman.HabibiEsfahani@msri.org}{saman.habibiesfahani@duke.edu}

\end{document}